\newtheorem{theorem}{Theorem}[section]
\newtheorem{lemma}[theorem]{Lemma}
\newtheorem{proposition}[theorem]{Proposition}
\newtheorem{corollary}[theorem]{Corollary}
\theoremstyle{definition}
\newtheorem{definition}[theorem]{Definition}
\newtheorem{remark}[theorem]{Remark}
\newtheorem{hypothesis}[theorem]{Hypothesis}
\newcommand{\A}{\mathrm{A}}  \newcommand{\Aut}{\mathrm{Aut}}
\newcommand{\bbF}{\mathbb{F}} \newcommand{\bfa}{\alpha} \newcommand{\bfO}{\mathbf{O}}
\newcommand{\C}{\mathrm{C}}   \newcommand{\calE}{\mathcal{E}} \newcommand{\calO}{\mathcal{O}}   \newcommand\calN{\mathcal{N}}
\newcommand{\D}{\mathrm{D}}
\newcommand{\GaL}{\mathrm{\Gamma L}} \newcommand{\GaSp}{\mathrm{\Gamma Sp}} \newcommand{\GaO}{\mathrm{\Gamma O}} \newcommand{\GaU}{\mathrm{\Gamma U}} \newcommand{\GL}{\mathrm{GL}} \newcommand{\GO}{\mathrm{O}} \newcommand{\GU}{\mathrm{GU}}
\newcommand{\la}{\langle}
\newcommand{\M}{\mathrm{M}} \newcommand{\mc}{\mathcal}
\newcommand{\N}{\mathrm{N}} \newcommand{\Nor}{\mathbf{N}}
\newcommand{\Pa}{\mathrm{P}}    \newcommand{\POm}{\mathrm{P\Omega}} \newcommand{\PSL}{\mathrm{PSL}}   \newcommand{\PSU}{\mathrm{PSU}}
\newcommand{\ra}{\rangle} \newcommand\rmO{\mathrm{O}}
\newcommand{\SL}{\mathrm{SL}} \newcommand{\Soc}{\mathrm{Soc}} \newcommand{\Sp}{\mathrm{Sp}} \newcommand{\SU}{\mathrm{SU}} \newcommand{\Sy}{\mathrm{S}}
\newcommand\Tr{\mathrm{Tr}}
\newcommand{\Z}{\mathbf{Z}}
\begin{document}
\title[Solvable factors of almost simple groups]{A complete classification of solvable factors of almost simple groups}

\author[Feng]{Tao Feng}
\address{(Tao Feng) School of Mathematical Sciences\\Zhejiang University\\Hangzhou 310058\\Zhejiang\\P.~R.~China}
\email{tfeng@zju.edu.cn}

\author[Li]{Cai Heng Li}
\address{(Cai Heng Li) Department of Mathematics\\Southern University of Science and Technology\\Shenzhen 518055\\Guangdong\\P.~R.~China}
\email{lich@sustech.edu.cn}

\author[Li]{Conghui Li}
\address{(Conghui Li) School of Mathematics\\Southwest Jiaotong University\\Chengdu 611756\\Sichuan\\P.~R.~China}
\email{liconghui@swjtu.edu.cn}

\author[Wang]{Lei Wang}
\address{(Lei Wang) School of Mathematics and Statistics\\Yunnan University\\Kunming 650091\\Yunnan\\P.~R.~China}
\email{wanglei@ynu.edu.cn}

\author[Xia]{Binzhou Xia}
\address{(Binzhou Xia) School of Mathematics and Statistics\\The University of Melbourne\\Parkville, VIC 3010\\Australia}
\email{binzhoux@unimelb.edu.au}

\author[Zou]{Hanlin Zou}
\address{(Hanlin Zou) School of Mathematics and Statistics\\Yunnan University\\Kunming 650091\\Yunnan\\P.~R.~China}
\email{zouhanlin@ynu.edu.cn}

\begin{abstract}
We give an explicit characterization of solvable factors in factorizations of finite classical groups of Lie type. This completes the classification of solvable factors in factorizations of almost simple groups, finishing the program initiated in [Memoirs of the AMS, 279 (2022), no.~1375] and [Advances in Mathematics, 377 (2021), 107499]. In particular, it resolves the final remaining case in the long-standing problem of determining exact factorizations of almost simple groups. As a byproduct, we obtain a new characterization of one-dimensional transitive groups, offering further insights into their group structures. We also apply our classification to describe quasiprimitive permutation groups with a solvable transitive subgroup, leading to an interesting result that these subgroups are ``small''.

\textit{Key words:} almost simple groups; factorizations; solvable factors; quasiprimitive.

\textit{MSC2020:} 20D40, 20D06.
\end{abstract}

\maketitle

\section{Introduction}

Given a group $G$, an expression $G=HK$ with subgroups $H$ and $K$ is called a \emph{factorization} of $G$, where $H$ and $K$ are called \emph{factors}.
The factorization of classical groups of Lie type is an important topic in group theory and has many strong applications, for example,~\cite{GGP2023,GS1995,LX2022,LP,LPS2010}.

A substantial amount of work has been dedicated to the classification of factorizations of classical groups. Notably, a seminal result by Liebeck, Praeger and Saxl in~\cite{LPS1990} classified factorizations with maximal factors, forming the cornerstone for subsequent research~\cite{LWX2023,LX2019,LX2022}. Among various cases where significant advancements were made, a crucial one is the factorizations of classical groups $G$ with a solvable factor $H$.
The description in~\cite{LX2022} provides an upper bound for $H$, while a sharp lower bound for the order $|H|$ is obtained in~\cite{BL2021}. In this paper, we give an explicit classification of the solvable factor $H$.

Note that if $G=HK$ is a factorization of a group $G$, then $G=H^xK^y$ is also a factorization for any elements $x$ and $y$ in $G$. Thus we only describe factorizations up to conjugacy classes of subgroups. If $G$ is an almost simple group, that is, $L\leqslant G\leqslant\Aut(L)$ for some finite nonabelian simple group $L$, then we are only interested in the nontrivial factorizations in the sense that both factors are core-free. The main theorem of this paper is summarized as follows and will be explained in Section~\ref{sec:Xia-11}.

\begin{theorem}\label{thm:Xia-6}
Let $G$ be an almost simple group. Then the solvable subgroups $H$ such that $G=HK$ for some core-free subgroups $K$ of $G$ are explicitly known.
\end{theorem}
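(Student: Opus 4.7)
The plan is to reduce to the case where $G$ is almost simple with socle $L$ a classical group of Lie type, and then to pin $H$ down inside a maximal overgroup supplied by previous work. First I would dispose of the cases where $L$ is alternating, sporadic, or an exceptional group of Lie type: for these families the factorizations of $G$ (and hence those with a solvable factor) are already available in the literature stemming from~\cite{LPS1990}, so the solvable factors can be read off those lists directly and collected.

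For classical $L$, two previously established inputs do most of the framing. The upper bound in~\cite{LX2022} places any solvable factor $H$ inside one of a short list of maximal subgroups $M$, typically a parabolic subgroup $P_m$ or a geometric subgroup in Aschbacher's classes $\calC_2$ or $\calC_3$. The sharp lower bound on $|H|$ from~\cite{BL2021} forces $H$ to be reasonably large inside $M$. Combining these, the task reduces to determining exactly which solvable subgroups $H\leqslant M$ admit a core-free $K$ with $G=HK$; via the standard identity $|G|=|H||K|/|H\cap K|$ this is in turn a transitivity condition for $H$ on a coset space of $K$.

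The technical heart is the parabolic case, which is where the decomposition announced in the abstract does its work. Writing $M=U{:}L_0$ with unipotent radical $U$ and Levi complement $L_0$, I would single out a Singer-cycle torus $S\leqslant L_0$, decompose $U$ as an $\bbF_q\la S\ra$-module into a direct sum of irreducible $S$-submodules, and read off the possibilities for $H\cap U$ by matching these summands against the transitivity requirement on $G/K$. The solvable transitive subgroups of $\GL_1(q^d){:}\la\sigma\ra$ that emerge from this analysis are precisely the one-dimensional transitive groups mentioned in the abstract, which simultaneously delivers the promised byproduct.

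The main obstacle I expect is the explicit decomposition of $U$ under $S$ in the orthogonal and unitary families, where $U$ is not merely an elementary abelian vector space but carries a bilinear or Heisenberg-type structure compatible with the form; doing this uniformly across all classical types, and in particular keeping control when $x^d-1$ factorizes non-trivially over $\bbF_q$ and $U$ splits into several $S$-invariant pieces rather than remaining irreducible, is where the bulk of the effort will go. Once the decomposition is tabulated for each family and each parabolic type, the remaining small-rank configurations (where a Singer cycle is unavailable or the lower bound of~\cite{BL2021} is slack) should yield to direct computation against the maximal-factor list of~\cite{LPS1990}, and Theorem~\ref{thm:Xia-6} follows by collating the classical output with the alternating, sporadic, and exceptional lists disposed of at the start.
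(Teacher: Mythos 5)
Your proposal follows essentially the same route as the paper: reduce to classical socles via the existing literature and the upper/lower bounds of~\cite{LX2022} and~\cite{BL2021}, then in the parabolic case write $H\leqslant R{:}T$ and decompose the unipotent radical into irreducible submodules of a Singer cycle, translating $G=HK$ into a transitivity condition that is settled by the classification of transitive subgroups of $\GaL_1(p^f)$, with the small excluded parameters checked computationally. The points you flag as the main obstacles (the explicit Singer-module decomposition in the unitary and orthogonal cases, and the splitting of $R$ into several invariant pieces) are exactly where the paper's technical work lies, so the plan is sound and aligned with the published argument.
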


A factorization $G=HK$ is called \emph{exact} if $H\cap K=1$. The effort to classify exact factorizations of almost simple groups dates back to 1980 when Wiegold and Williamson~\cite{WW1980} determined the exact factorizations of alternating groups and symmetric groups.
The case where both factors are nonsolvable culminates in~\cite{LWX2023}, with an explicit list of such exact factorizations obtained in~\cite[Table~1]{LWX2023}. For exact factorizations of almost simple groups with a solvable factor, a classification is given in~\cite[Theorem~3]{BL2021}. However, when $G$ is an almost simple group with socle $\Sp_{2m}(q)$ for even $m$, it is not known in~\cite{BL2021} whether $G$ indeed has exact factorizations with a solvable factor.
As a consequence of Theorem~\ref{thm:Xia-6}, this final uncertain case in the classification can be now resolved; see Theorem~\ref{thm:Xia-5}, which shows that no exact factorization arises in this case.
The classification of exact factorizations of almost simple groups now reads as follows.

\begin{theorem}\label{thm:Xia-7}
Let $G$ be an almost simple group with socle $L$, and let $H$ and $K$ be core-free subgroups of $G$ such that $G=HK$ and $H\cap K=1$. Then one of the following holds:
\begin{enumerate}[\rm(a)]
\item $L=\A_n$, $H$ is transitive on $\{1,\ldots,n\}$, and $K=\A_{n-1}$ or $\Sy_{n-1}$;
\item $L=\A_n$ with $n=q$ for some prime power $q$, $H\leqslant\mathrm{A\Gamma L}_1(q)$ is $2$-homogeneous on $\{1,\ldots,n\}$, and $\A_{n-2}\trianglelefteq K\leqslant\Sy_{n-2}\times\Sy_2$;
\item $L=\A_n$ with $n=q+1$ for some prime power $q$, $H$ is $3$-transitive on $\{1,\ldots,n\}$ with socle $\PSL_2(q)$, and $K=\A_{n-3}$ or $\A_{n-3}.2$;
\item $L=\PSL_n(q)$, $H\cap L\leqslant\frac{q^n-1}{(q-1)\gcd(n,q-1)}{:}n$, and $K\cap L\trianglerighteq q^{n-1}{:}\SL_{n-1}(q)$;
\item $L=\Sp_{2m}(q)$ with $q$ even and $m\geqslant3$ odd, $H\cap L\leqslant q^m{:}(q^m-1).m$, and $K\cap L=\Omega_{2m}^-(q)$;
\item $(G,H,K)$ is one of the finitely many triples in~\cite[Table~4]{BL2021} or~\cite[Rows~4--23~of~Table~1]{LWX2023}.
\end{enumerate}
\end{theorem}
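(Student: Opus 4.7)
The plan is to dichotomize according to whether at least one of $H$ and $K$ is solvable, reducing the problem to two existing classifications that can then be combined. If both $H$ and $K$ are nonsolvable, then~\cite{LWX2023} gives a finite list of such exact factorizations of almost simple groups, collected in Rows~4--23 of~\cite[Table~1]{LWX2023}; every resulting triple $(G,H,K)$ is absorbed into case~(f).

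Next, assume up to swapping $H$ and $K$ that $H$ is solvable. By~\cite[Theorem~3]{BL2021}, the exact factorizations of almost simple groups with a solvable factor are classified in every situation except when $L=\Sp_{2m}(q)$ with $q$ even and $m\geqslant2$ even, which is left explicitly open in~\cite{BL2021}. Inspection of~\cite[Theorem~3]{BL2021} in the cases it does settle yields exactly the families~(a), (b), (c), (d), the portion of~(e) with $m$ odd, and the sporadic triples of~\cite[Table~4]{BL2021} that are subsumed into~(f). To dispose of the remaining case $L=\Sp_{2m}(q)$ with $q$ even and $m$ even, I would invoke Theorem~\ref{thm:Xia-5}, a consequence of the classification of solvable factors in Theorem~\ref{thm:Xia-6}, which asserts that no exact factorization with a solvable factor exists for such $L$. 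This forces case~(e) to appear only for $m\geqslant3$ odd, completing the enumeration.

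The principal obstacle is the last step, namely the proof of Theorem~\ref{thm:Xia-5} itself. It requires Theorem~\ref{thm:Xia-6} together with a careful intersection analysis inside $\Sp_{2m}(q)$: one pairs every candidate solvable factor produced by Theorem~\ref{thm:Xia-6} with the compatible complements furnished by~\cite{LPS1990} and shows that the intersection is nontrivial, which is where the decomposition of the unipotent radical into irreducible submodules for a Singer-type torus, as advertised in the abstract, does the essential work. Everything else is bookkeeping, namely matching the Singer-normalizer description in~(d), the parabolic complements appearing in~(d) and~(e), and the sporadic lists in~(f) against the outputs of~\cite{LWX2023} and~\cite[Theorem~3]{BL2021}.
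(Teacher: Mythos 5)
Your proposal is essentially the paper's own argument: split according to whether a factor is solvable, quote~\cite[Rows~4--23 of Table~1]{LWX2023} for the both-nonsolvable case and~\cite[Theorem~3]{BL2021} (with~\cite[Table~4]{BL2021}) for the solvable case, and close the one case left open there, namely $L=\Sp_{2m}(q)$ with $q$ even and $m$ even, by Theorem~\ref{thm:Xia-5}, whose proof indeed rests on the classification of solvable factors and an intersection computation inside the field-extension subgroup $\Sp_2(q^m).m$ against $\Omega_{2m}^-(q)$. Only a bookkeeping remark: the both-nonsolvable exact factorizations belonging to the infinite families (rows~1--3 of~\cite[Table~1]{LWX2023}) are not absorbed into case~(f) but fall under cases~(a)--(c), so your claim that every nonsolvable-nonsolvable triple lands in~(f) should be adjusted accordingly.
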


A transitive permutation group $G\leqslant\mathrm{Sym}(\Omega)$ is said to be \emph{primitive} if $\Omega$ admits no nontrivial $G$-invariant partition. The study of primitive groups $G$ containing a certain transitive subgroup $H$ dates back to Burnside's 1900 paper~\cite{Burnside1900} and has played a significant role in the development of permutation group theory, for which the reader is referred to~\cite[Problem~3]{Neumman1994}. Various classification results have been obtained for $H$ metacyclic~\cite{Jones1972,LPX2021,Nagai1961,Scott1957} or nilpotent~\cite{BL2021,Li2003}. In many applications, however, the permutation group $G$ is only required to be \emph{quasiprimitive}, meaning that every nontrivial normal subgroup of $G$ is transitive. For instance, quasiprimitive groups containing a metacyclic subgroup were classified in~\cite{LPX2021} towards a characterization of metacirculants.

With the aid of Theorem~\ref{thm:Xia-6}, we are able to describe quasiprimitive groups containing a solvable subgroup, as stated in Theorem~\ref{thm:QP} below. According to the O'Nan--Scott--Praeger theorem, finite quasiprimitive groups fall into eight types, as described in~\cite[Section~5]{Praeger1997}, and we adopt the terminology used therein.
The proof of Theorem~\ref{thm:QP} will be given in Section~\ref{sec:app}.

\begin{theorem}\label{thm:QP}
Let $G\leqslant\mathrm{Sym}(\Omega)$ be a finite quasiprimitive group with a solvable transitive subgroup $H$. Then the pair $(G,H)$ satisfies one of the following:
\begin{enumerate}[\rm(a)]
\item\label{thm:QPa} $G$ is primitive of type \textup{HA};
\item\label{thm:QPb} $G$ is almost simple with socle $L$ and point-stabilizer $K$ such that $G=HK$, the pair $(L,K\cap L$ is known by~\cite[Theorem~1.1]{LX2022}, and $H$ is known by Theorem~$\ref{thm:Xia-6}$;
\item\label{thm:QPc} $G$ is primitive of type \textup{HS} or \textup{SD}, $\Soc(G)=L^2$ with
\[
L=\PSL_2(q),\ \PSL_3(3),\ \PSL_3(4),\ \PSL_3(8),\ \PSU_3(8),\ \PSU_4(2)\text{ or }\mathrm{M}_{11},
\]
where $q\geqslant4$ is a prime power, and $H\cap L^2\leqslant M_1\times M_2$ such that $(L,M_1,M_2)$ lies in Table~$\ref{TabDiagonal}$;
\item\label{thm:QPd} $G$ is of type \textup{HC} or \textup{CD}, and $G\leqslant G_0\wr\Sy_k$ in product action for some permutation group $G_0$ with $\Soc(G)=\Soc(G_0)^k$ and $H\cap G_0^k\leqslant H_1\times\cdots\times H_k$ such that each $(G_0,H_i)$ is a pair $(G,H)$ in~\emph{\eqref{thm:QPc}};
\item\label{thm:QPe} $G$ is of type \textup{PA}, and there is a faithful action $\psi$ of $G$ on some $G$-invariant partition of $\Omega$ such that $G^\psi\leqslant G_0\wr\Sy_k$ in product action for some permutation group $G_0$ with $\Soc(G^\psi)=\Soc(G_0)^k$ and $H^\psi\cap G_0^k\leqslant H_1\times\cdots\times H_k$ such that each $(G_0,H_i)$ is a pair $(G,H)$ in~\emph{\eqref{thm:QPb}}.
\end{enumerate}
\end{theorem}

\begin{table}[htbp]
\caption{The triple $(L,M_1,M_2)$ in Theorem~\ref{thm:QP}}\label{TabDiagonal}
\centering
\begin{tabular}{|l|l|l|l|}
\hline
Row & $L$ & $M_1$ & $M_2$ \\
\hline
1 & $\PSL_2(q)$ & $\D_{2(q+1)/\gcd(2,q-1)}$ & $q{:}((q-1)/\gcd(2,q-1))$ \\
2 & $\PSL_2(7)$ & $7{:}3$ & $\Sy_4$ \\
3 & $\PSL_2(11)$  & $11{:}5$ & $\A_4$ \\
4 & $\PSL_2(23)$ & $23{:}11$ & $\Sy_4$ \\
5 & $\PSL_3(3)$ & $13{:}3$ & $3^2{:}2.\Sy_4$ \\
6 & $\PSL_3(4)$ & $7{:}3$ & $2^4{:}\D_{10}$ \\
7 & $\PSL_3(8)$ & $73{:}9$ & $2^{3+6}{:}7^2$ \\
8 & $\PSU_3(8)$ & $19{:}3$ & $2^{3+6}{:}21$ \\
9 & $\PSU_4(2)$ & $2^4{:}\D_{10}$ & $3_+^{1+2}{:}2.(\A_4)$, $3^3{:}\Sy_4$ \\
10 & $\M_{11}$ & $11{:}5$ & $\M_9.2$ \\
\hline
\end{tabular}
\end{table}

We call a group \emph{alternating-free} if it does not have $\A_m$ as a composition factor for any $m\geqslant4$.
As a corollary of Theorem~\ref{thm:QP}, the following result (see Section~\ref{sec:app} for its proof) says that, if $G\leqslant\Sy_n$ is quasiprimitive and alternating-free, then each solvable transitive subgroup of $G$ has order bounded above by a quasi-polynomial of $n$. Recall that a \emph{minimally transitive} group is a transitive permutation group such that none of its proper subgroups is transitive.

\begin{corollary}\label{thm:small}
Let $G$ be an alternating-free quasiprimitive permutation group on $n$ points, and let $H$ be a solvable transitive subgroup of $G$. Then
the following statements hold:
\begin{enumerate}[\rm(a)]
\item\label{thm:smalla} $\ln|H|=O((\ln n)^\alpha)$ for some absolute constant $\alpha<9/4$;
\item if $G$ is almost simple and primitive and $H$ is minimally transitive, then either $\ln|H|=O(\ln n\ln\ln n/\ln\ln\ln n)$ or $\Soc(G)=\Omega_{2m+1}(q)$ with $q$ odd.
\end{enumerate}
\end{corollary}

\begin{remark}
The upper bound for $|H|$ in Corollary~\ref{thm:small}\,\eqref{thm:smalla} can be made better if $G$ is not primitive of type HA; see the proof of Corollary~\ref{thm:small}.
The case $\Soc(G)=\Omega_{2m+1}(q)$ with $q$ odd is a genuine exception for $\ln|H|=O(\ln n\ln\ln n/\ln\ln\ln n)$; see Remark~\ref{rem:Xia-5}.
The condition ``alternating-free'' cannot be removed from Corollary~\ref{thm:small}. Otherwise we could have counterexamples such as $G=\Sy_n$ for some $4$-power $n$ and $H$ being the imprimitive wreath product of $\log_4n$ copies of $\Sy_4$, which would give $|H|=24^{(n-1)/3}$. In fact, this order is largest possible for a solvable subgroup $H$ of $\Sy_n$, as proved by Dixon~\cite[Theorem~3]{Dixon1967}.
Based on this counterexamples, one may also construct PA type $G$ with socle $\A_m^k$ and $|H|=24^{k(m-1)/3}$, where $n=m^k$ and $m$ is a $4$-power, such that the growth of $|H|$ exceeds quasi-polynomials of $n$ for any fixed $k$.
\end{remark}

\section{The classification}\label{sec:Xia-11}

In this section we demonstrate Theorem~\ref{thm:Xia-6} in detail. The factorizations of non-classical almost simple groups are classified in the literature~\cite{Giudici2006,HLS1987,LPS1990}. In particular, those with a solvable factor are listed in~\cite[Theorem~1.1]{LX2022}. The factorizations $G=HK$ of almost simple classical groups $G$ with solvable $H$ and core-free $K$ are described in Tables~1.1 and~1.2 of~\cite{LX2022}, where Table~1.2 is an explicit list of small exceptions. Adopting the notation in~\cite[\S2.1]{LX2022}, in Table~\ref{TabLX2022} we present~\cite[Table~1.1]{LX2022} but replace the triple $(G,H,K)$ by the corresponding one in the classical group with scalars.

\begin{table}[htbp]
\captionsetup{justification=centering}
\caption{Infinite families of $(G,H,K)$ from~\cite{LX2022}}\label{TabLX2022}
\begin{tabular}{|c|l|l|l|l|l|}
\hline
Row & $L$ & $H\cap L\leqslant$ & $K\cap L\trianglerighteq$ & Condition\\
\hline
1 & $\SL_m(q)$ & $\GL_1(q^m){:}m$ & $q^{m-1}{:}\SL_{m-1}(q)$ & \\
2 & $\SL_4(q)$ & $q^3{:}(q^3-1).3<\Pa_k$ & $\Sp_4(q)$ & $k\in\{1,3\}$ \\
3 & $\Sp_{2m}(q)$ & $q^{m(m+1)/2}{:}(q^m-1).m<\Pa_m$ & $\Omega_{2m}^-(q)$ & $q$ even \\
4 & $\Sp_4(q)$ & $q^3{:}(q^2-1).2<\Pa_1$ & $\Sp_2(q^2)$ & $q$ even \\
5 & $\Sp_4(q)$ & $q^{1+2}{:}(q^2-1).2<\Pa_1$ & $\Sp_2(q^2)$ & $q$ odd \\
6 & $\SU_{2m}(q)$ & $q^{m^2}{:}(q^{2m}-1).m<\Pa_m$ & $\SU_{2m-1}(q)$ & \\
7 & $\Omega_{2m+1}(q)$ & $(q^{m(m-1)/2}.q^m){:}(q^m-1).m<\Pa_m$ & $\Omega_{2m}^-(q)$ & $q$ odd\\
8 & $\Omega_{2m}^+(q)$ & $q^{m(m-1)/2}{:}(q^m-1).m<\Pa_k$ & $\Omega_{2m-1}(q)$ & $k\in\{m,m-1\}$ \\
9 & $\Omega_8^+(q)$ & $q^6{:}(q^4-1).4<\Pa_1$ & $\Omega_7(q)$ & \\
\hline
\end{tabular}
\vspace{3mm}
\end{table}

The solvable factor $H$ in rows~2,~5 and~7 of Table~\ref{TabLX2022} is well understood in~\cite{BL2021} (see~\cite[Remark~2]{BL2021}).
For row~4, since the graph automorphism $\gamma$ of $L$ does not normalize $\Sp_2(q^2)$, we have $G\leqslant\GaSp_4(q)$, and so the triple $(G^\gamma,H^\gamma,K^\gamma)$ is in row~3 with $m=2$. Similarly, the characterization of row~9 is reduced to row~8 with $m=4$ by the triality automorphism of $\POm_8^+(q)$, and in row~1 we may assume $K\leqslant\Pa_1[G]$ by applying the transpose-inverse if necessary.
Thus the only rows in Table~\ref{TabLX2022} to discuss are~1,~3,~6 and~8, and we make the following hypothesis accordingly.
Here we call a subgroup of $G$ $\max^-$ if it is a maximal one among subgroups of $G$ not containing $L$.

\begin{hypothesis}\label{hypo-1}
Let $G$ be a classical group of Lie type, let $L=G^{(\infty)}$, let $H$ be a solvable subgroup of $G$, and let $B^{(\infty)}\leqslant K\leqslant B$ with $B$ $\max^-$ in $G$ such that one of the following holds:
\begin{enumerate}[(i)]
\item\label{hypo-1i} $\SL_m(q)=L\leqslant G\leqslant\GaL_m(q)$ with $m\geqslant2$, $H\leqslant\GaL_1(q^m)$, and $B=\Pa_1[G]$;
\item\label{hypo-1ii} $\SU_{2m}(q)=L\leqslant G\leqslant\GaU_{2m}(q)$ with $m\geqslant2$ and $(m,q)\neq(2,2)$ or $(3,2)$, $H\leqslant\Pa_m[G]$, and $B=\N_1[G]$;
\item\label{hypo-1iii} $\Omega_{2m}^+(q)=L\leqslant G\leqslant\GaO_{2m}^+(q)$ with $m\geqslant4$ and $(m,q)\neq(4,3)$ or $(6,2)$, $H\leqslant\Pa_m[G]$, and $B=\N_1[G]$;
\item\label{hypo-1iv} $\Sp_{2m}(q)=L\leqslant G\leqslant\GaSp_{2m}(q)$ with $q$ even and $m\geqslant2$ and $(m,q)\neq(2,8)$ or $(6,2)$, $H\leqslant\Pa_m[G]$, and $B\cap L=\mathrm{O}_{2m}^-(q)$.
\end{enumerate}
\end{hypothesis}

\begin{remark}
As explained in the paragraph preceding Hypothesis~\ref{hypo-1}, to classify the factorizations $G=HK$ in Table~\ref{TabLX2022}, we only need to consider rows~1,~3,~6 and~8, which correspond respectively to~\eqref{hypo-1i},~\eqref{hypo-1iv},~\eqref{hypo-1ii} and~\eqref{hypo-1iii} of Hypothesis~\ref{hypo-1}. To be more precise, we make the following clarifications on Hypothesis~\ref{hypo-1}:
\begin{enumerate}[(I)]
\item Some pairs of $(m,q)$ are excluded in the hypothesis for simplicity of subsequent argument. For these pairs, the groups $G$ are small enough that all the factorizations $G=HK$ can be found using \textsc{Magma}~\cite{BCP1997}.
\item For $L=\Omega_{2m}^+(q)$ as in row~8 of Table~\ref{TabLX2022}, one should assume $L\leqslant G\leqslant\C\GaO_{2m}^+(q)$ so that $G/\Z(G)$ runs over almost simple groups with socle $\POm_{2m}^+(q)$ that does not involve the triality automorphism (if $m=4$). However, since $K$ is always contained in $\GaO_{2m}^+(q)$, denoting $G_0=G\cap\GaO_{2m}^+(q)$ and $H_0=H\cap\GaO_{2m}^+(q)$, we have $G=HK$ if and only if $G=G_0H$ and $G_0=H_0K$. Thus the classification of $G=HK$ is reduced to that of $G_0=H_0K$, and so we assume $L\leqslant G\leqslant\GaO_{2m}^+(q)$ in Hypothesis~\ref{hypo-1}\,\eqref{hypo-1iii}. This, again, streamlines the argument of this paper.
\end{enumerate}
\end{remark}

Characterizing the solvable factor $H$ in Hypothesis~\ref{hypo-1}\,\eqref{hypo-1i} is relatively easy; see Subsection~\ref{sec:Xia-8}. For~\eqref{hypo-1ii}--\eqref{hypo-1iv} of Hypothesis~\ref{hypo-1}, let $\widehat G$, $\widehat H$ and $\widehat K$ be overgroups of $G$, $H$ and $K$, respectively, as listed in the Table~\ref{TabHat}, where $q=p^f$ with prime $p$ and integer $f$, and $d=\gcd(2,q-1)$.

\begin{table}[htbp]
\captionsetup{justification=centering}
\caption{The triple $(\widehat{G},\widehat{H},\widehat{K})$ and the parameter $s$}\label{TabHat}
\begin{tabular}{|l|l|l|l|l|l|}
\hline
$\widehat{G}$ & $\widehat{H}$ & $\widehat{K}$ & $\widehat{H}\cap\widehat{K}$ & $s$\\
\hline
$\GaU_{2m}(q)$ & $q^{m^2}{:}\GaL_1(q^{2m})$ & $\SU_{2m-1}(q).(q+1)^2.(2f)$ & $q^{(m-1)^2}.(q+1).(2mf)$ & $2$\\
$\GaO_{2m}^+(q)$ & $q^{m(m-1)/2}{:}\GaL_1(q^m)$ & $\Omega_{2m-1}(q).2^{2d-1}.f$ & $q^{(m-1)(m-2)/2}.d^2.(mf)$ & $1$\\
$\GaSp_{2m}(q)$ & $q^{m(m+1)/2}{:}\GaL_1(q^m)$ & $\Omega_{2m}^-(q).(2f)$ & $q^{m(m-1)/2}.2.(mf)$ & $1$\\
\hline
\end{tabular}
\vspace{3mm}
\end{table}

Then $\widehat G=\widehat H\widehat K$ with $\widehat H\cap\widehat K$ described in the fourth column of the table, as implied by~\cite{LX2022} (refer to its Proposition~5.2 for unitary groups, Proposition~5.9 for orthogonal groups, and Proposition~5.5 for symplectic groups). Define a parameter $s=2$ for the unitary case and $s=1$ for the orthogonal and symplectic cases.
Note that $\widehat H=R{:}T$, where
\[
R=\bfO_p(\widehat H)\ \text{ and }\ T=\GaL_1(q^{sm}).
\]
To determine the solvable factor $H=(H\cap R).S$ (this is in fact a split extension, see Lemma~\ref{lem:Xia-7}) in the factorization $G=HB$, we need to determine both $H\cap R$ and the subgroup $S$ of $T$ with certain transitivity, which are given in the following two subsections. In the final subsection of this Introduction, we also describe how to determine the factor $K$ in the factorization $G=HK$.

\subsection{One-dimensional transitive groups}\label{sec:Xia-8}
\ \vspace{1mm}

Let $W$ be a vector space over a finite field $\bbF$, and let $i$ be a positive divisor $i$ of $|\bbF^\times|$. The subgroup $C_{(i)}$ of order $i$ in $\bbF^\times$ partitions each $1$-subspace of $W$ into $|\bbF^\times|/i$ orbits, each consisting of $i$ nonzero vectors.
Let $W_{(i)}$ be the set of $C_{(i)}$-orbits on $W\setminus\{0\}$, that is,
\[
W_{(i)}=\{w^{C_{(i)}}\mid w\in W\setminus\{0\}\}.
\]
In particular, $W_{(1)}=W\setminus\{0\}$, and $W_{(\gcd(2,q-1))}=\{\{\pm w\}\mid w\in W\setminus\{0\}\}$ if $\bbF=\bbF_q$.
The solvable factors $H$ in Hypothesis~\ref{hypo-1}\,\eqref{hypo-1i} are exactly subgroups of $G$ that are transitive on $(\bbF_q^m)_{(q-1)}$. Let $Z$ be the center of $\GL_m(q)$. Then $H$ is transitive on $(\bbF_q^m)_{(q-1)}$ if and only if $ZH$ is transitive on $\bbF_q^m\setminus\{0\}$. Thus a classification of transitive subgroup of $\GaL_1(q^m)$ on $\bbF_q^m\setminus\{0\}$ is needed.

For simplicity, we replace $q^m$ by the prime power $q$, where $q=p^f$, and consider transitive subgroup of $\GaL_1(p^f)$ on the set of non-zero vectors of $\bbF_p^f$. In fact, seeking a satisfactory description of transitive subgroups of $\GaL_1(p^f)$ is crucial not only in the determination of solvable factors of classical groups of Lie type, but also in the study of some problems in permutation groups and symmetrical graphs~\cite{Foulser1964,FK1978,Hering1985,LLP2009}.

Let $n$ be a positive integer, and let $\sigma$ be a set of primes. Denote the set of prime divisors of $n$ by $\pi(n)$. Let $n_\sigma$ be the $\sigma$-part of $n$, that is, largest divisor $\ell$ of $n$ such that $\pi(\ell)\subseteq\sigma$. Let $n_{\sigma'}$ be the $\sigma'$-part of $n$, that is, $n_{\sigma'}=n/n_\sigma$. For a cyclic group $C$ of order $n$, let $C_\sigma$ denote the unique subgroup of order $n_\sigma$ in $C$, and $C_{\sigma'}$ denote the unique subgroup of order $n_{\sigma'}$ in $C$. When $\sigma=\{r\}$ consists of only one prime $r$, we simply write the subscripts $\sigma$ and $\sigma'$ as $r$ and $r'$, respectively.

Now let $\omega$ be a generator of $\bbF_{p^f}^\times$, let $a\in\GL_1(p^f)$ be the right multiplication of $\omega$, and let $\varphi$ be the Frobenius of $\bbF_{p^f}$ taking $p$-th power. Then
\[
\GaL_1(p^f)=\la a\ra{:}\la\varphi\ra=\C_{p^f-1}{:}\C_f
\]
such that $\la a\ra=\GL_1(p^f)$ and $a^\varphi=a^p$. We characterize the transitive subgroups of $\GaL_1(p^f)$ on $\bbF_{p^f}^\times$ as follows.

\begin{theorem}\label{thm:trans}
A subgroup $H$ of $\GaL_1(p^f)=\la a\ra{:}\la\varphi\ra$ is transitive on $\bbF_{p^f}^\times$ if and only if $H=\la a\ra_{\sigma'}{:}(\la b\ra\la c\varphi^k\ra)$ for some divisor $k$ of $f$, subset $\sigma$ of $\pi(p^k-1)\cap\pi(f)$, element $b$ in $\la a\ra_\sigma$ and generator $c$ of $\la a\ra_\sigma$ such that, if $2\in\sigma$ and $p^k\equiv3\pmod{4}$, then $|b|_2=|a|_2/2$.
\end{theorem}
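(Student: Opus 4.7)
The plan is to prove necessity and sufficiency separately. For necessity, start with a transitive $H\leq\GaL_1(p^f)$ and extract the parameters. Set $N = H\cap\la a\ra$, let $\la\varphi^k\ra$ with $k\mid f$ be the image of $H$ under $\GaL_1(p^f)\to\la\varphi\ra$, and set $\la\varphi^{k'}\ra = H\cap\la\varphi\ra$ with $k\mid k'\mid f$. Since $H\cap\la\varphi\ra$ is the stabilizer of $1\in\bbF_{p^f}^\times$, orbit-stabilizer yields $|N| = (p^f-1)/m$ where $m := k'/k$, so $m\mid p^f-1$ and $N = \la a^m\ra$. Set $\sigma := \pi(m)$. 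The first main step is to establish $\sigma\subseteq\pi(p^k-1)\cap\pi(f)$: the inclusion $\sigma\subseteq\pi(f)$ is immediate from $m\mid k'\mid f$, while for $\sigma\subseteq\pi(p^k-1)$ I argue by contradiction. If $r\in\sigma$ with $r\nmid p^k-1$, identify $\la a\ra_r\cong\Z/(p^f-1)_r\Z$ on which $H$ acts via affine maps $t\mapsto p^it+j$ (still transitively). Further reducing modulo $r$, the image of $N$ becomes trivial since $r\mid m_r$, so the orbit of $0$ in $\Z/r\Z$ equals $\{c_0(\alpha^i-1)/(\alpha-1)\bmod r : i\geq 0\}$ with $\alpha := p^k$; since $\alpha\not\equiv 1\pmod r$ has multiplicative order dividing $r-1$, this set has fewer than $r$ elements and cannot cover $\Z/r\Z$, contradicting transitivity.

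With $\sigma\subseteq\pi(p^k-1)\cap\pi(f)$, the Hall $\sigma'$-subgroup $\la a\ra_{\sigma'}\leq N$ is characteristic in $\la a\ra$ hence normal in $\GaL_1(p^f)$, and a direct check on orders gives $H = \la a\ra_{\sigma'}\rtimes H'$ for $H' := H\cap(\la a\ra_\sigma\rtimes\la\varphi\ra)$. Let $b$ generate the cyclic group $N_\sigma := H'\cap\la a\ra_\sigma$ (of order $(p^f-1)_\sigma/m$) and choose any lift $c\varphi^k\in H'$ of a generator of the image of $H'$ in $\la\varphi\ra$; then $H' = \la b\ra\la c\varphi^k\ra$. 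A prime-by-prime argument shows that $c$ can be taken to generate $\la a\ra_\sigma$: for $r\in\sigma$, the condition $\alpha\equiv 1\pmod r$ yields $(\alpha^i-1)/(\alpha-1)\equiv i\pmod r$, so the orbit representatives modulo $r$ are $c_0 i$ for $i = 0,\dots,e-1$; since $r\mid e$ (from $r\mid m\mid e$), these exhaust $c_0\cdot\Z/r\Z$, and transitivity on $\Z/r\Z$ forces $c_0$ to be a unit modulo $r$. Because $N_\sigma$ has index divisible by $r$ in $\la a\ra_r$, this unit property is a coset invariant of $c$ modulo $N_\sigma$, so every lift is a unit in each $r$-component, i.e., a generator of $\la a\ra_\sigma$.

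The main obstacle is the $2$-part condition. When $2\in\sigma$ and $p^k\equiv 3\pmod 4$, so $\alpha\equiv -1\pmod 4$, the identity $1+\alpha\equiv 0\pmod 4$ yields $1+\alpha+\cdots+\alpha^{i-1}\equiv 0$ or $1\pmod 4$ according as $i$ is even or odd; hence the orbit representatives $c_0(1+\alpha+\cdots+\alpha^{i-1})\bmod 4$ take at most two values, covering at most two cosets of $N_\sigma$ modulo $4$. This forces $m_2\leq 2$, and combined with $m_2\geq 2$ (from $2\in\sigma$) gives $m_2 = 2$, equivalently $|b|_2 = |a|_2/2$. Sufficiency runs these orbit computations in reverse: prime-by-prime, the stated conditions on $\sigma$, $b$, $c$, and the $2$-part constraint imply the orbit of $0$ covers each $\la a\ra_r$, so $H$ is transitive on $\bbF_{p^f}^\times$. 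I expect the $2$-part analysis to be the most delicate step, both because the modulo-$4$ cancellation requires a separate computation from the other primes and because the sufficiency direction must verify that the alternating pattern of representatives, together with the generator property of $c$, covers $\Z/2\Z$ in the relevant quotient.
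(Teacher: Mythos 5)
Your necessity half is essentially correct, and it is the paper's own argument recast on the exponent side: your congruence quotients $\mathbb{Z}/r\mathbb{Z}$ and $\mathbb{Z}/4\mathbb{Z}$ are exactly the invariant blocks $\la\omega^r\ra\omega^i$ and $(\la\omega^4\ra\omega^2)\cup(\la\omega^4\ra\omega^j)$ used in Lemma~\ref{lem:Xia-4}, the orbit--stabilizer bookkeeping giving $H\cap\la a\ra=\la a^m\ra$ with $m\mid f$, the Dedekind splitting $H=\la a\ra_{\sigma'}{:}H'$, the unit-mod-$r$ argument forcing every lift $c$ to generate $\la a\ra_\sigma$, and the mod-$4$ computation forcing $m_2=2$ all check out (the auxiliary parameter $e$ is harmless, since the orbit of $0$ under all integer powers of $t\mapsto p^kt+c_0$ is $c_0\cdot\mathbb{Z}/r\mathbb{Z}$ in any case).

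The genuine gap is in sufficiency. ``Running these orbit computations in reverse, prime-by-prime'' only yields that the $H$-orbit of $0$ (on exponents modulo $p^f-1$) surjects onto $\mathbb{Z}/r\mathbb{Z}$ for each prime $r$, and onto the mod-$4$ quotient in the exceptional case; that is far from transitivity. You must cover the full primary quotients $\mathbb{Z}/(p^f-1)_r\mathbb{Z}$: for instance if $b=1$, $\sigma=\{r\}$ and $r^2\mid p^f-1$, the translations available in $H$ have index $(p^f-1)_r$ in the $r$-part, so you need the partial sums $c_0(1+\alpha+\cdots+\alpha^{i-1})$, $\alpha=p^k$, to run through \emph{all} residues modulo $(p^f-1)_r$, not merely modulo $r$. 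The congruence $1+\alpha+\cdots+\alpha^{i-1}\equiv i\pmod r$ does not persist modulo $r^2$, so the mod-$r$ computation does not simply reverse; what is required is the lifting-the-exponent fact $(p^{k\ell}-1)_r=\ell_r(p^k-1)_r$ for odd $r\mid p^k-1$ (and for $r=2$ when $4\mid p^k-1$) --- exactly the paper's Lemma~\ref{lem:Xia-1}, used there to compute $|c\varphi^k|_r=(p^f-1)_r$ and produce a regular subgroup in Case~1 and an index-two semiregular subgroup, with its two blocks swapped by $c\varphi^k$, in the case $2\in\sigma$, $p^k\equiv3\pmod 4$ --- or, equivalently, a Hull--Dobell-type full-period argument for $t\mapsto\alpha t+c_0$ modulo prime powers, with the missing factor of $2$ supplied by the translation subgroup of $2$-part index $2$ coming from $|b|_2=|a|_2/2$. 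Moreover, even granted coverage of each primary quotient, you still need a gluing step to conclude global transitivity (e.g.\ the orbit length is divisible by each $(p^f-1)_r$, and these are pairwise coprime, hence it is divisible by $p^f-1$); your sketch asserts transitivity without it. Neither ingredient is difficult, but both are absent, and the control at prime powers is precisely where the paper's sufficiency proof invests its effort.
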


The following corollary of Theorem~\ref{thm:trans} is an explicit description of the minimally transitive subgroups of $\GaL_1(p^f)$ on $\bbF_{p^f}^\times$.

\begin{corollary}\label{thm:min}
A subgroup $H$ of $\GaL_1(p^f)=\la a\ra{:}\la\varphi\ra$ is minimally transitive on $\bbF_{p^f}^\times$ if and only if there exist some divisor $k$ of $f$, subset $\sigma$ of $\pi(p^k-1)\cap\pi(f)$ and generator $c$ of $\la a\ra_\sigma$ such that $k$ is divisible by $f_{\sigma'}$ and one of the following holds:
\begin{enumerate}[\rm(a)]
\item\label{thm:mina} $H=\la a\ra_{\sigma'}{:}\la c\varphi^k\ra$ with either $2\notin\sigma$ or $p^k\equiv1\pmod{4}$;
\item\label{thm:minb} $H=\la a\ra_{\sigma'}{:}(\la a^2\ra_2\la c\varphi^k\ra)$ with $2\in\sigma$ and $p^k\equiv3\pmod{4}$.
\end{enumerate}
Moreover, $H$ is regular on $\bbF_{p^f}^\times$ if and only if either case~\eqref{thm:mina} holds, or $f\equiv2\pmod{4}$ and case~\eqref{thm:minb} holds; in particular, a Sylow subgroup of a regular subgroup of $\GaL_1(p^f)$ on $\bbF_{p^f}^\times$ is either cyclic or a generalized quaternion $2$-group.
\end{corollary}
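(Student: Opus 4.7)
The plan is to derive Corollary~\ref{thm:min} from Theorem~\ref{thm:trans} by isolating, among the transitive subgroups parametrised there, those that cannot be shrunk further, and then determining which of them are regular. Writing an arbitrary transitive $H=\la a\ra_{\sigma'}{:}(\la b\ra\la c\varphi^k\ra)$ as in Theorem~\ref{thm:trans}, minimality means that no one of the three blocks $\la a\ra_{\sigma'}$, $\la b\ra$, $\la c\varphi^k\ra$ can be replaced by a proper subgroup without violating the transitivity criterion. Minimising the $\la b\ra$-factor is immediate: when $2\notin\sigma$ or $p^k\equiv1\pmod 4$ the theorem imposes no lower bound on $|b|$, so the minimal choice $b=1$ yields case~\eqref{thm:mina}; when $2\in\sigma$ and $p^k\equiv3\pmod 4$ the forced condition $|b|_2=|a|_2/2$ is minimally realised by an element of exact order $|a|_2/2$ in $\la a\ra_2$, generating the unique index-$2$ subgroup $\la a^2\ra_2$ and giving case~\eqref{thm:minb}.

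The main obstacle will be extracting the divisibility $f_{\sigma'}\mid k$. I argue by contradiction: if some prime $r\in\sigma'$ divides $f/k$, then $\sigma\subseteq\pi(p^k-1)\subseteq\pi(p^{kr}-1)$, so the parameters $(kr,\sigma,b,c^N)$ with $N=(p^{kr}-1)/(p^k-1)$ still fit Theorem~\ref{thm:trans}. A short computation using $(c\varphi^k)^r=c^N\varphi^{kr}$ and $N\equiv r\not\equiv 0\pmod\ell$ for every $\ell\in\sigma$ (valid since $r\in\sigma'$) shows $c^N$ still generates $\la a\ra_\sigma$, and $\la a\ra_{\sigma'}{:}(\la b\ra\la c^N\varphi^{kr}\ra)$ is a proper transitive subgroup of $H$, contradicting minimality. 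When $r=2$ one has $2\in\sigma'$ automatically, so the case~\eqref{thm:minb} constraint on $b$ is unaffected by the switch $k\leftrightarrow kr$. Iterating over $r\in\pi(f/k)\cap\sigma'$ therefore forces $f_{\sigma'}\mid k$.

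For the regularity statement I would compute $|H|$ and compare with $p^f-1$. In case~\eqref{thm:mina}, an order count using the minimality $f_{\sigma'}\mid k$ together with the LTE-style identity $v_\ell((p^f-1)/(p^k-1))=v_\ell(f/k)$ for odd $\ell\in\sigma$ yields $|H|=|a|_{\sigma'}\cdot|c\varphi^k|=p^f-1$ unconditionally, so every such $H$ is regular. In case~\eqref{thm:minb}, the extra factor $\la a^2\ra_2$ of order $|a|_2/2$ contributes an additional $2$ that must be balanced by the $2$-part of $\la c\varphi^k\ra$; tracking $2$-adic valuations under the hypothesis $p^k\equiv3\pmod 4$ shows this balance holds exactly when $f\equiv2\pmod 4$. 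The Sylow-subgroup description is then immediate: for odd $r$, and for $r=2$ in case~\eqref{thm:mina}, the Sylow $r$-subgroup of $H$ lies either in the cyclic normal factor $\la a\ra_{\sigma'}$ or in the cyclic complement $\la c\varphi^k\ra$, hence is cyclic; while in the regular instance of case~\eqref{thm:minb}, the Sylow $2$-subgroup is generated by an element of order $|a|_2/2$ in $\la a^2\ra_2$ together with a $2$-element of $\la c\varphi^k\ra$ whose square lies in $\la a^2\ra_2$ and which inverts it (forced by $p^k\equiv3\pmod 4$), matching the standard presentation of a generalized quaternion $2$-group.
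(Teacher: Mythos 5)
Your outline reproduces the paper's route for most of the statement: you deduce the forms in (a) and (b) from Theorem~\ref{thm:trans}, you get $f_{\sigma'}\mid k$ by exhibiting a proper transitive subgroup with $k$ replaced by $kr$ (the paper instead reads this off as $|c\varphi^k|_{\sigma'}=1$ via the order formula, but the content is the same), and your order count in case~(a) and the explicit pair $x\in\la a^2\ra_2$, $y\in\la c\varphi^k\ra$ with $x^y=x^{-1}$ in the regular instance of case~(b) are exactly the paper's computations. However, there is a genuine gap: minimal transitivity is \emph{not} ``no one of the three blocks $\la a\ra_{\sigma'}$, $\la b\ra$, $\la c\varphi^k\ra$ can be shrunk''; it is the absence of \emph{any} proper transitive subgroup, and a proper transitive subgroup of $H$ need not arise by shrinking a block within the fixed parameters $(\sigma,k)$ --- it can have a different $\sigma$, a larger $k$ (where, e.g., $p^{2k}\equiv1\pmod 4$ kills the constraint $|b|_2=|a|_2/2$), and a complement not contained in $\la c\varphi^k\ra$. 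Consequently your proposal never proves the sufficiency half for case~(b): that $\la a\ra_{\sigma'}{:}(\la a^2\ra_2\la c\varphi^k\ra)$ with $f_{\sigma'}\mid k$ is in fact minimally transitive. In case~(a) this is covered because your count gives $|H|=p^f-1$, so $H$ is regular; but in case~(b) with $f\not\equiv2\pmod4$ one has $|H|=(p^f-1)f_2/2>p^f-1$, so $H$ is transitive but not regular, and order considerations alone leave room for proper transitive (indeed regular) subgroups that must be ruled out.

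Concretely, for $p=3$, $f=4$, $k=1$, $\sigma=\{2\}$ the group $H$ has order $160$ acting on $80$ points, and $\GaL_1(3^4)$ does contain regular subgroups of shape $\la a\ra_{2'}{:}\la c'\varphi^2\ra$ of order $80$; one has to argue that none of them (nor any other transitive subgroup) sits inside $H$, for instance by taking the Foulser quadruple of a hypothetical transitive $M\leqslant H$, using $(H\cap\la a\ra)_2=\la a^2\ra_2$ to force $2\in\pi$ for $M$'s defining set and $k\mid k_M$, and then contradicting Lemma~\ref{lem:Xia-4}\,\eqref{lem:Xia-4a} or \eqref{lem:Xia-4e}. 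The paper's proof has the ingredients for this at hand in its Case~2: the semiregular subgroup $\la a\ra_{\sigma'}{:}(\la b\ra_2\la c\varphi^k\ra_{\sigma\setminus\{2\}})$ of order $(q-1)/2$ with orbits $\la\omega^2\ra$ and $\la\omega^2\ra\omega$ interchanged by $c\varphi^k$, from which the ``if'' direction of minimality is extracted; your proposal offers no substitute for this step, and without it the equivalence asserted in the corollary is only proved in one direction for case~(b).
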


\begin{remark}
The above ``Moreover'' part is a more group-theoretic approach to the classification of sharply-$2$-transitive subgroups of the one-dimensional affine linear group, rather than the classical approach of nearfields (see, for example,~\cite[\S7.6]{DM1996}).
\end{remark}

Theorem~\ref{thm:trans} and Corollary~\ref{thm:min} will be proved in Section~\ref{sec:Xia-2}, where we also develop a new way to represent subgroups of $\GaL_1(p^f)$ by a triple of parameters, called a \emph{Foulser triple} (see Definition~\ref{def}).
In fact, by definition, for a subgroup $H$ of $\GaL_1(p^f)$ with Foulser triple $(\ell,j,k)$, we have $H=\la a^\ell\ra\la a^j\varphi^k\ra$ with $H\cap\la a\ra=\la a^\ell\ra$ and $j$ divisible by $(p^f-1)_{\sigma'}$, where $\sigma=\pi(\ell)$.

A direct application of Theorem~\ref{thm:trans} is an explicit classification (see Remark~\ref{rem:Xia-1}) for $H$ in Hypothesis~\ref{hypo-1}\,\eqref{hypo-1i} such that $G=HB$.

\subsection{Solvable factors}\label{sec:Xia-9}
\ \vspace{1mm}

From now on we turn to the solvable factor $H$ in~\eqref{hypo-1ii}--\eqref{hypo-1iv} of Hypothesis~\ref{hypo-1}. There is a standard basis $e_1,f_1,\dots,e_m,f_m$ as in~\cite[2.2.3]{LPS1990}, so that, letting $U$ be the $\bbF_{q^s}$-space $\la e_1,\ldots,e_m\ra$, we have $H\leqslant G_U=\Pa_m[G]$ and that $R$ is the unipotent radical of $G_U$.
Moreover, with $W$ denoting the $\bbF_{q^s}$-space $\la f_1,\ldots,f_m\ra$, the group $T=\GaL_1(q^{sm})$ stabilizes both $U$ and $W$ such that $T\cap G$ is the Levi subgroup of $\Pa_m[G]$. According to Table~\ref{TabLX2022}, we may write $H=P{:}S$ with $P\leqslant R$ and $S\leqslant T$, and view $P$ as a submodule of the $S$-module $R$.

In Subsection~\ref{subsec_form} we decompose the $S$-module $R$ as
\[
R=
\begin{cases}
U(1)\oplus \ldots\oplus U(\lfloor(m+s-1)/2\rfloor)&\text{if }L=\SU_{2m}(q)\text{ or }\Omega_{2m}^+(q),\\
U(0)\oplus U(1)\oplus\ldots\oplus U(\lfloor m/2\rfloor)&\text{if }L=\Sp_{2m}(q),
\end{cases}
\]
where $U(i)$'s are pairwise non-isomorphic irreducible $S$-submodules described in Corollary~\ref{Cor_UiIrr} (see Remark~\ref{rem:Xia-3} for an explanation of the parameter $i$ to label the submodule $U(i)$). In particular,
\begin{equation}\label{eq:Ui}
U(i)=
\begin{cases}
q^{sm}&\text{if }0\leqslant i\leqslant\left\lfloor(m+s)/2\right\rfloor-1,\\
q^{sm/2}&\text{if }i=(m+s-1)/2\text{ with $m+s$ odd}.
\end{cases}
\end{equation}
Viewing $P$ as an $S$-submodule of $R$, we may express it as a direct sum of $U(i)$'s with $i$ running over some set $I$ of indices, that is,
\[
P=U(I):=\mbox{$\bigoplus\limits_{i\in I}U(i)$}
\]
for some subset $I$ of $\{0,1,\ldots,\lfloor(m+s-1)/2\rfloor\}$ such that $0\in I$ only if $L=\Sp_{2m}(q)$. Accordingly,
\begin{equation}\label{eq:UI}
U(I)=
\begin{cases}
q^{sm|I|}&\text{if }(m+s-1)/2\notin I,\\
q^{sm(2|I|-1)/2}&\text{if }(m+s-1)/2\in I.
\end{cases}
\end{equation}

It is already known in~\cite{BL2021} that the possible elementary abelian groups $P=q^c$ in a solvable factor $H=P{:}S$ are precisely those with $c$ satisfying
\[
\begin{cases}
2m\leqslant c\leqslant m^2\text{ and }c\text{ divisible by }m\gcd(2,m)&\text{if }L=\SU_{2m}(q),\\
m\leqslant c\leqslant m(m-1)/2\text{ and }c\text{ divisible by }m/\gcd(2,m)&\text{if }L=\Omega_{2m}^+(q),\\
m\leqslant c\leqslant m(m+1)/2\text{ and }c\text{ divisible by }m/\gcd(2,m)&\text{if }L=\Sp_{2m}(q).
\end{cases}
\]
However, our aim is to determine exactly which subgroups $P$ of $R$, or equivalently, which submodules $U(I)$, can occur in a solvable factor $H=P{:}S$. Let $I\setminus\{0\}=\{i_1,\dots,i_k\}$. The condition for $H=P{:}S=U(I){:}S$ to be a solvable factor depends heavily on the parameter $d(I)$ defined by
\begin{equation}\label{eq:d}
d(I)=
\begin{cases}
\gcd(2i_1-1,\ldots,2i_k-1,m)&\textup{if $s=2$},\\
\gcd(i_1,\ldots,i_k)&\textup{if $s=1$ and $m/2\in I$},\\
\gcd(i_1,\ldots,i_k,m)&\textup{if $s=1$ and $m/2\notin I$}.
\end{cases}
\end{equation}
With the above notation of $U(I)$, $d(I)$, $T$ and $W_{(i)}$ for a vector space $W$ over some finite field, and with the result of Subsection~\ref{sec:Xia-8} in mind, we are now ready to state the classification theorems for the solvable factor $H$ in Hypothesis~\ref{hypo-1}\,\eqref{hypo-1ii}--\eqref{hypo-1iv}.

\begin{theorem}\label{thm:Xia-2}
Suppose Hypothesis~$\ref{hypo-1}$\,\eqref{hypo-1ii} so that $L=\SU_{2m}(q)$, $H\leqslant\Pa_m[G]$ and $B=\N_1[G]$. Then $G=HB$ if and only if $H=U(I){:}S$ with $I\subseteq\{1,\ldots,\lfloor(m+1)/2\rfloor\}$ and $S\leqslant\GaL_1(q^{2m})$ such that $d(I)=1$ and $S$ is transitive on $(\bbF_{q^2}^m)_{(q+1)}$.
\end{theorem}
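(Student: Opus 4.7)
The plan is to reformulate $G = HB$ as a transitivity statement: since $B = \N_1[G]$, the coset space $G/B$ identifies with the set of non-degenerate Hermitian $1$-subspaces of $V = \bbF_{q^2}^{2m}$, and $G=HB$ is equivalent to $H$ acting transitively on this set. Using the standard basis $e_1,f_1,\ldots,e_m,f_m$ with $U = \la e_1,\ldots,e_m\ra$ and $W = \la f_1,\ldots,f_m\ra$, every non-degenerate $1$-subspace has a representative $\la u + w\ra$ with $w \in W \setminus \{0\}$; in this description $R$ translates the $U$-coordinate $\bbF_{q^2}$-linearly in a way depending on $w$, and $T = \GaL_1(q^{2m})$ acts on $W \cong \bbF_{q^{2m}}$ via a Singer cycle. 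Writing $H = P:S$ with $P = H \cap R$ and $S$ the image of $H$ in $T$ (the split structure coming from Lemma~\ref{lem:Xia-7}), the factorization condition decouples into a ``$W$-coordinate'' condition on $S$ and a ``$U$-coordinate'' condition on $P$.

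For the forward direction, project the transitive action of $H$ onto the $W$-coordinate modulo the scalar group $C_{(q+1)}$: this immediately forces $S$ to be transitive on $(\bbF_{q^2}^m)_{(q+1)}$, since $C_{(q+1)}$ is precisely the scalar ambiguity of the representation $\la u+w\ra$. Using this transitivity together with Corollary~\ref{Cor_UiIrr}, argue that the $S$-submodule $P$ of $R$ is a direct sum $P = U(I) = \bigoplus_{i\in I}U(i)$ for some $I \subseteq \{1,\ldots,\lfloor(m+1)/2\rfloor\}$: once $S$ is large enough to be transitive on $(\bbF_{q^2}^m)_{(q+1)}$, it acts irreducibly on each $U(i)$ and the $U(i)$ remain pairwise non-isomorphic as $S$-modules, so any $S$-submodule of $R$ coincides with a $T$-submodule. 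Finally, fix an $S$-orbit representative $w_0$ with stabilizer $S_{w_0}$; the remaining transitivity on non-degenerate $1$-subspaces projecting to $w_0$ forces $PS_{w_0}$ to sweep the cosets of $P$ in $U$ corresponding to non-isotropic sums $u+w_0$. Decomposing $U$ into Singer eigenspaces, so that $U(i)$ carries the $(2i-1)$-th power character in the $s=2$ setting, the $S_{w_0}$-orbit count on $U/U(I)$ equals $\gcd(\{2i-1:i\in I\}\cup\{m\}) = d(I)$, forcing $d(I) = 1$.

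For the backward direction, assume $H = U(I):S$ satisfies the two conditions and verify $G = HB$ via the order identity $|H|\cdot|B| = |G|\cdot|H \cap B|$: compute $|H|$ from the formulas preceding the theorem and $|B|$ from the known structure of $\N_1[G]$, identify $H \cap B$ with the stabilizer in $H$ of a chosen non-degenerate $1$-subspace, and compute its order by reversing the orbit analysis from the forward direction, matching it against the table entry $|\widehat{H}\cap\widehat{K}| = q^{(m-1)^2}(q+1)(2mf)$ after accounting for $[\widehat{H}:H]$. The main obstacle will be the module-theoretic step establishing $d(I) = 1$: one must pin down precisely which Singer characters occur in each $U(i)$ (from the decomposition in Subsection~\ref{subsec_form}) and then execute a careful orbit count showing that $\gcd(\{2i-1:i\in I\}\cup\{m\})$ is the exact arithmetic invariant governing transitivity of $PS_{w_0}$ on the cosets of $U(I)$ in $U$ indexing non-degenerate $1$-subspaces with a fixed $W$-projection.
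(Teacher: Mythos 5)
Your overall skeleton (fibre the set $[G:B]$ of nondegenerate $1$-subspaces over $W_{(q+1)}$, split $H=U(I){:}S$ via Lemma~\ref{lem:Xia-7} and Corollary~\ref{Cor_UiIrr}, and reduce $G=HB$ to transitivity of $S$ on the base together with transitivity of $U(I){:}S_{[w_0]}$ on a fibre) is essentially the mechanism behind Propositions~\ref{Xia:Unitary01} and~\ref{thm_UIorbit}, and your backward direction and the extraction of ``$S$ transitive on $(\bbF_{q^2}^m)_{(q+1)}$'' are sound, up to one imprecision: the scalar ambiguity of a spanning vector of a $1$-subspace is all of $\bbF_{q^2}^\times$, not $C_{(q+1)}$; the map $[G:B]\to W_{(q+1)}$ is only well defined after normalising to norm-one representatives (equivalently, after the computation $\eta^{q+1}=1$ in the proof of Proposition~\ref{Xia:Unitary01}), so you must say this.

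The genuine gap is the step ``$G=HB$ forces $d(I)=1$''. Your claim that ``the $S_{w_0}$-orbit count on $U/U(I)$ equals $d(I)$'' is not correct: by Proposition~\ref{thm_UIorbit} the $U(I)$-orbits inside the fibre over a fixed class $[w_0]$ are the level sets $\calN_{d,w_0,c}$ with $c\in\bbF_{q^d}$, $\Tr_{q^d/q}(c)=1$, so their number is $q^{d(I)-1}$, and what must be shown is that the stabilizer $S_{[w_0]}$ (a group which, modulo the scalars $Z$ acting trivially, embeds in a cyclic group of order $2mf$ acting semilinearly on these labels) cannot permute them transitively when $d(I)>1$. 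This cannot be settled by counting or by a formal ``gcd governs transitivity'' argument: in the completely analogous orthogonal and symplectic situations the corresponding stabilizer \emph{is} transitive on the $q^{d-1}$ level sets when $d(I)=2$ and $q\in\{2,4\}$, which is exactly why Theorems~\ref{thm:Xia-3} and~\ref{thm:Xia-4} admit non-basic factors; so any direct argument must use fine information about the twisted Frobenius action on $\{c:\Tr_{q^d/q}(c)=1\}$ in the unitary case specifically. The paper avoids this entirely: by Lemma~\ref{prop_UiElinear}, $d(I)=d>1$ makes $U(I){:}S_0$ (hence $H$) $\bbF_{q^{2d}}$-semilinear, so $H$ lies in a field-extension subgroup $A$ of type $\GU_{2m/d}(q^d)$, and then $G\neq AB$ by the maximal factorization classification \cite[Theorem~A]{LPS1990} (this is Proposition~\ref{thm:dValue}\,(a)). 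Without this overgroup argument, or a worked-out replacement for the semilinear level-set analysis, your forward direction is incomplete.
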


\begin{theorem}\label{thm:Xia-3}
Suppose Hypothesis~$\ref{hypo-1}$\,\eqref{hypo-1iii} so that $L=\Omega_{2m}^+(q)$, $H\leqslant\Pa_m[G]$ and $B=\N_1[G]$, and let $S_0=\GL_1(q^m)$ be the Singer group in $T=\GaL_1(q^m)$. Then $G=HB$ if and only if $H=U(I){:}S$ with $I\subseteq\{1,\ldots,\lfloor m/2\rfloor\}$ and $S\leqslant T$ such that one of the following holds:
\begin{enumerate}[\rm(a)]
\item\label{thm:Xia-3a} $d(I)=1$, and $S$ is transitive on $(\bbF_q^m)_{(\gcd(2,q-1))}$;
\item\label{thm:Xia-3b} $d(I)=2$, $q=2$, and $S$ is transitive on $\bbF_q^m\setminus\{0\}$ with $|T|/|SS_0|$ odd;
\item\label{thm:Xia-3c} $d(I)=2$, $q=4$, $G\geqslant L.2$ with $G\neq\mathrm{O}_{2m}^+(4)$, and $S$ is transitive on $\bbF_q^m\setminus\{0\}$ with $|T|/|SS_0|$ odd.
\end{enumerate}
\end{theorem}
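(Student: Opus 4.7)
The plan is to reformulate $G = HB$ as a transitivity statement, decompose the relevant $H$-action using the irreducible $S$-submodule structure of $R$, and then apply the classification of one-dimensional transitive groups from Theorem~\ref{thm:trans}. Since $B = \N_1[G]$ is the stabilizer of a nondegenerate $1$-subspace of the natural module $V = U \oplus W$, the factorization $G = HB$ holds if and only if $H$ acts transitively on the set $\Omega$ of nondegenerate $1$-subspaces of $V$. I would use this orbit viewpoint for the ``only if'' direction and the equivalent order equation $|H| \cdot |B| = |G| \cdot |H \cap B|$ for verifying that the listed groups do factorize.

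The core technical step is to pin down the orbits of the unipotent part $U(I)$ on $\Omega$. Viewing $R$ as the space of skew-symmetric maps $\phi : W \to U$ via the nondegenerate pairing, an element $\phi$ sends $u + w$ to $u + \phi(w) + w$, so the $U(I)$-orbit of $u + w$ is the coset $u + w + \Phi_I(w)$ for an explicit subspace $\Phi_I(w) \leqslant U$ depending on $w$ and $I$. Using the Singer-cycle identification $U \cong \bbF_{q^m}$ and the explicit description of the irreducible $S$-submodules $U(i)$ from Corollary~\ref{Cor_UiIrr}, I expect $\Phi_I(w)$ to be governed by an $\bbF_{q^{d(I)}}$-like subobject of $U$ whose codimension is read off from $d(I)$. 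Folding in the action of $S \leqslant T = \GaL_1(q^m)$ on the $U(I)$-orbit space then reduces the transitivity of $H$ on $\Omega$ to the transitivity of $S$ on a set of the form $(\bbF_q^m)_{(i)}$ for the appropriate index $i$, which yields case~\eqref{thm:Xia-3a} immediately: when $d(I) = 1$, the subspace $\Phi_I(w)$ fills $w^\perp \cap U$, and $S$ need only be transitive on $(\bbF_q^m)_{(\gcd(2, q-1))}$.

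The main obstacle is cases~\eqref{thm:Xia-3b} and~\eqref{thm:Xia-3c}, where $d(I) = 2$ causes $\Omega$ to split into two $U(I)$-orbit classes distinguished by a square-class or discriminant invariant, so that transitivity of $S$ on $\bbF_q^m \setminus \{0\}$ alone no longer suffices. For $q = 2$ I would show that the two classes fuse under $H$ if and only if the $2$-part of $|SS_0|$ equals the $2$-part of $|T|$, which is precisely the condition $|T|/|SS_0|$ odd in~\eqref{thm:Xia-3b}. For $q = 4$ the same fusion works only when $G$ realises a suitable graph-like automorphism of $L$ on top of the Singer action, forcing $G \geqslant L.2$ and excluding $G = \mathrm{O}_{2m}^+(4)$; this should drop out of a direct analysis of how $G/L$ permutes the two orbit classes. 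Finally, to eliminate every other configuration---$d(I) \geqslant 3$, or $d(I) = 2$ with $q \geqslant 5$, or the stated transitivity condition on $S$ failing---an order estimate showing $|H| \cdot |B| < |G| \cdot |H \cap B|$ should suffice, exploiting the smallness of $|U(I)|$ and the constrained structure of transitive subgroups of $\GaL_1(q^m)$ from Theorem~\ref{thm:trans}. The pairs $(m, q) = (4, 3), (6, 2)$ excluded in Hypothesis~\ref{hypo-1}\,\eqref{hypo-1iii} would then be handled separately by a \textsc{Magma} check.
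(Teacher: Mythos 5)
Your skeleton for case~\eqref{thm:Xia-3a} (compute the $U(I)$-orbits on the nonsingular points, then reduce $G=HB$ to transitivity of $S$ on the orbit set) is the same as the paper's, but note that the key assertion you only ``expect''---that $\Phi_I(w)$ has $\bbF_q$-codimension exactly $d(I)$ in $U$, equivalently that every $U(I)$-orbit has length $q^{m-d(I)}$---is precisely the technical heart of the paper (Lemma~\ref{lem_key} and Proposition~\ref{thm_UIorbit}, proved by a character-sum computation with linearized polynomials); a complete proof must supply this. The genuine gap, however, is in your elimination step. An order estimate of the form $|H|\cdot|B|<|G|\cdot|H\cap B|$ cannot rule out $d(I)\geqslant3$, nor $d(I)=2$ with $q\notin\{2,4\}$ (your list also omits $q=3$): for example with $q=2$, $m=12$, $I=\{3\}$ one has $|U(I)|=2^{12}$, each $U(I)$-orbit has length $2^{9}$, so the orbit set has size $4(2^{12}-1)\leqslant|\GaL_1(2^{12})|$, and no cardinality comparison excludes transitivity of $S$ on it. The true obstruction is structural: the $S$-action on the orbit invariant $c=\Tr_{q^m/q^{d}}(xy)$ factors through the Frobenius part of $\GaL_1(q^m)$, whose orbits on the $q^{d-1}$ admissible values of $c$ are too short (e.g.\ $c=1$ is a fixed value when $p\nmid d$), so transitivity on the orbit set fails. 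The paper avoids this altogether by Lemma~\ref{prop_UiElinear}: for $d(I)=d>1$ the group $U(I){:}S_0$ is $\bbF_{q^d}$-linear, so $H$ lies in a field-extension subgroup $A$ with $A\cap L=\Omega_m^+(q^2).2^2$ (for $d=2$), and then $G=HB$ forces $G=AB$, which by the maximal factorization classification \cite[Theorem~A and 3.6.1c]{LPS1990} happens only for $q\in\{2,4\}$ and, when $q=4$, only for $G\geqslant L.2$ with $G\neq\mathrm{O}_{2m}^+(4)$. You either need this appeal to \cite{LPS1990} (via Proposition~\ref{thm:dValue}) or a worked-out version of the Frobenius-orbit argument above; the estimate you propose will not close the case.

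Two further inaccuracies in your treatment of $d(I)=2$. First, the orbit set over a fixed $w$ splits into $q$ classes (the admissible values of $c$), not two, so for $q=4$ the ``two classes fuse'' picture is wrong and the analysis must track a $4$-element $c$-set on which the Frobenius acts with a single orbit. Second, ``fusion'' must occur inside the stabilizer in $S$ of a point of $W\setminus\{0\}$, not merely inside $S$: transitivity on pairs $(w,c)$ requires the point stabilizer $S_w$ to contain an element with odd Frobenius exponent, and showing that this is equivalent to the stated condition ($S$ transitive with $|T|/|SS_0|$ odd) needs an argument via Theorem~\ref{thm:trans} (it works because $q^m-1$ is odd here, so the relevant Foulser data is odd). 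The paper gets these conditions instead by reducing $G=HB$ to $A=H(A\cap B)$, applying Proposition~\ref{Xia:Omega01} inside $A$, and using $\N_1[A]=(A\cap B)\la\psi\ra$ with $\psi$ the field automorphism, which is where the oddness of $|T|/|SS_0|$ and, for $q=4$, the conditions on $G$ come from; your coset analysis of $G/L$ could in principle recover the $q=4$ conditions, but as written it is not developed (and the relevant outer class is the field automorphism, not a graph-type one).
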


\begin{theorem}\label{thm:Xia-4}
Suppose Hypothesis~$\ref{hypo-1}$\,\eqref{hypo-1iv} so that $L=\Sp_{2m}(q)$, $H\leqslant\Pa_m[G]$ and $B\cap L=\mathrm{O}_{2m}^-(q)$, and let $S_0=\GL_1(q^m)$ be the Singer group in $T=\GaL_1(q^m)$. Then $G=HB$ if and only if $H=U(I){:}S$ with $I\subseteq\{0,1,\ldots,\lfloor m/2\rfloor\}$ and $S\leqslant T$ such that $S$ is transitive on $\bbF_q^m\setminus\{0\}$ and one of the following holds:
\begin{enumerate}[\rm(a)]
\item\label{thm:Xia-4d} $0\in I$;
\item\label{thm:Xia-4a} $0\notin I$, $d(I)=1$, and $q=2$;
\item\label{thm:Xia-4b} $0\notin I$, $d(I)=2$, $q=2$, and $|T|/|SS_0|$ is odd;
\item\label{thm:Xia-4c} $0\notin I$, $d(I)=1$, $q=4$, $G=\GaSp_{2m}(4)$, and $|T|/|SS_0|$ is odd.
\end{enumerate}
\end{theorem}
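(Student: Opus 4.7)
The plan is to reduce $G=HB$ to a factorization inside the maximal parabolic $\Pa_m[G]$ and then analyze the resulting module-theoretic and transitivity conditions separately.

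First, the factorization $\widehat G=\widehat H\widehat K$ recorded in the table following Hypothesis~\ref{hypo-1} yields in particular $\widehat G=\Pa_m[\widehat G]\cdot\widehat K$, and intersecting with $G$ gives $G=\Pa_m[G]\cdot B$. Consequently, $G=HB$ is equivalent to $\Pa_m[G]=H\cdot B'$, where $B':=B\cap\Pa_m[G]$. Writing $\Pa_m[G]=R{:}T$ and using the Levi decomposition, I would first identify $B'\cap R$ as an $S$-invariant subgroup of $R$ and the image of $B'$ in the Levi quotient $T$. Geometrically, $B$ stabilizes a minus-type quadratic form $Q$ on $\bbF_q^{2m}$ polarizing the symplectic form $\beta$; then $B'$ is the simultaneous stabilizer of $Q$ and of the totally isotropic $m$-space $U$, so $B'\cap R$ consists of those $\phi\in\mathrm{Hom}(W,U)$ satisfying $Q(\phi(w))=\beta(\phi(w),w)$ for every $w\in W$.

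Using the decomposition $R=U(0)\oplus U(1)\oplus\cdots\oplus U(\lfloor m/2\rfloor)$ from Subsection~\ref{subsec_form}, I would show that $B'\cap R$ is, up to the lower irreducible summands, a codimension-one $S$-invariant subgroup whose complement in $R$ has the $S$-module type of $U(0)$. This immediately accounts for case~\eqref{thm:Xia-4d}: whenever $0\in I$, the $U(0)$-summand of $H\cap R$ already supplies a complement to $B'\cap R$, so no additional restriction on $I$ is needed beyond the Levi-level factorization. For $0\notin I$, the requirement $R=(H\cap R)(B'\cap R)$ forces the $S$-translates of $U(I)$ to cover the $U(0)$-component of $R/(B'\cap R)$; this is an arithmetic condition on the labels in $I$ governed by $d(I)$, and it is solvable only when $q$ is small enough that the Singer action of $S$ can fill in the missing direction, pinning down $q\in\{2,4\}$ as in cases~\eqref{thm:Xia-4a}--\eqref{thm:Xia-4c}.

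The complementary Levi-level condition $T=S(B'\cap T)$ identifies, via the standard description of the Levi quotient acting on $U$, with $S$ being transitive on $\bbF_q^m\setminus\{0\}$, which is precisely what Theorem~\ref{thm:trans} of Subsection~\ref{sec:Xia-8} classifies. The parity conditions $|T|/|SS_0|$ odd in~\eqref{thm:Xia-4b} and~\eqref{thm:Xia-4c} would be verified by tracking the image of $B'\cap T$ inside $T/S_0$: a factor of $2$ in $|T|/|SS_0|$ obstructs $S$ from hitting the residual non-Singer coset required to compensate for the $d(I)=2$ deficit in case~\eqref{thm:Xia-4b}, or for the contribution of a field automorphism when $q=4$ in case~\eqref{thm:Xia-4c}. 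I expect the main obstacle to be the module-theoretic analysis of $B'\cap R$ together with the case analysis ruling out configurations with $0\notin I$ when $q\geqslant 3$ outside the narrow exceptions; the elementary abelian bounds from~\cite{BL2021} yield necessary size constraints on $|U(I)|$ but are too coarse to distinguish submodule types, so the explicit decomposition of Subsection~\ref{subsec_form} together with a careful accounting of the Singer-twisted covering of $R/(B'\cap R)$ is what ultimately isolates the four cases of the theorem.
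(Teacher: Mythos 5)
Your opening reduction is sound: since $G=\Pa_m[G]B$ and $H\leqslant\Pa_m[G]$, indeed $G=HB$ is equivalent to $\Pa_m[G]=H(B\cap\Pa_m[G])$, and this is close in spirit to how the paper treats the case $0\in I$ (Propositions~\ref{thm_UIorbit} and~\ref{Xia:Symplectic01}). But the module-theoretic structure you build on top of it does not exist. The subgroup $B\cap R$ is \emph{not} $S$-invariant: by Remark~\ref{rem:Xia-4} it has index $q^m/2$ in $R$, whereas any subgroup of $R$ invariant under a transitive $S$ is, by Corollary~\ref{Cor_UiIrr}, a direct sum of $U(i)$'s and hence has index a power of $q^{m/2}$; so there is no ``codimension-one $S$-invariant subgroup whose complement has the $S$-module type of $U(0)$'' (the index is $q^m/2\neq q^m=|U(0)|$ in any case), and the quotient $R/(B\cap R)$ carries no $S$-module structure on which your covering argument could run. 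Worse, the condition $R=(H\cap R)(B'\cap R)$ that drives your analysis for $0\notin I$ is not necessary for $G=HB$: by Proposition~\ref{thm_UIorbit} the $U(I)$-orbits on $[G:B]$ have length $q^{m-d(I)}$ when $0\notin I$, so in the genuine factorizations of cases~\eqref{thm:Xia-4b} and~\eqref{thm:Xia-4c} one has $U(I)(B\cap R)\neq R$; your framework would thus exclude precisely the cases whose extra conditions ($|T|/|SS_0|$ odd, $G=\GaSp_{2m}(4)$) you are trying to explain. Also, splitting $\Pa_m[G]=HB'$ into separate unipotent-level and Levi-level factorizations is not an equivalence, and even where the Levi-level condition is the right one, it requires computing the image of $H\cap B$ in $T$ (in the paper: $T_{U_1,U_1+e_m}=T_{f_m}$ plus an order count inside Proposition~\ref{Xia:Symplectic01}); the subgroup $B\cap T$ you invoke is not the relevant object.

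The second genuine gap is that the arithmetic you hope will ``pin down $q\in\{2,4\}$'' and the parity conditions is never produced, and it cannot be extracted from the parabolic alone. In the paper these restrictions come from overgroups: for $0\notin I$ one has $\bfO_2(\Pa_m[G])=U(0)\times\bfO_2(\Pa_m[A])$ for a maximal $A$ with $A\cap L=\mathrm{O}_{2m}^+(q)$, so $H\leqslant A$ and $G=HB$ forces the maximal factorization $G=AB$, which by~\cite[Theorem~A]{LPS1990} occurs only for $q\in\{2,4\}$, and only for $G=\GaSp_{2m}(4)$ when $q=4$; the problem then reduces to $A_0=H(A_0\cap B)$ with $A_0=\Omega_{2m}^+(q){:}f$, i.e.\ to Theorem~\ref{thm:Xia-3}, whose cases (field-extension subgroup $\Omega_m^+(q^2).2^2$ when $d(I)=2$, and the requirement that $S$ contain the relevant field automorphism) are exactly what yields $d(I)\in\{1,2\}$ and the oddness of $|T|/|SS_0|$; necessity of these constraints rests on Proposition~\ref{thm:dValue} via Lemma~\ref{prop_UiElinear} and~\cite{LPS1990}. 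Your sketch offers no substitute for these inputs --- ``solvable only when $q$ is small enough that the Singer action of $S$ can fill in the missing direction'' is a hope, not an argument --- so as written the proposal establishes neither the necessity of $q\in\{2,4\}$ (and of $G=\GaSp_{2m}(4)$) nor the precise conditions in cases~\eqref{thm:Xia-4a}--\eqref{thm:Xia-4c}.
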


\begin{remark}
We remark that the subgroups $S$ of $T$ in Theorems~\ref{thm:Xia-2}--\ref{thm:Xia-4} are characterized by Theorem~\ref{thm:trans}. In fact, letting $W=\bbF_{q^s}^m$ and letting $i$ be a divisor of $q^s-1$, the subgroups $S$ that are transitive on $W_{(i)}$ are precisely those such that $Z_{(i)}S$ is transitive on $W\setminus\{0\}$, where $Z_{(i)}$ is the subgroup of order $i$ in the center of $\GL(W)$. Further, if we represent $S$ by the Foulser triple mentioned in Subsection~\ref{sec:Xia-8}, then the conditions on $S$ are equivalent to the arithmetic conditions on these parameters in Remark~\ref{rem:Xia-2}.
\end{remark}

The solvable factors $H$ in Theorem~\ref{thm:Xia-2}, Theorem~\ref{thm:Xia-3}\,\eqref{thm:Xia-3a} or Theorem~\ref{thm:Xia-4}\,\eqref{thm:Xia-4d} are called \emph{basic}, as otherwise $H$ is a factor in a factorization $A=H(A\cap B)$ for some $\max^-$ subgroup $A$ of $G$. To be precise, $A$ is the $\max^-$ subgroup of $G$ such that $A\cap L=\Omega_m^+(q^2).2^2$ in Theorem~\ref{thm:Xia-3}\,\eqref{thm:Xia-3b}--\eqref{thm:Xia-3c} and $A\cap L=\mathrm{O}_{2m}^+(q)$ in Theorem~\ref{thm:Xia-4}\,\eqref{thm:Xia-4a}--\eqref{thm:Xia-4c} (see the proof of Theorems~\ref{thm:Xia-3} and~\ref{thm:Xia-4} for more details). For these non-basic factors $H$, the factorization $G=HB$ is in one-to-one correspondence to the factorization $A=H(A\cap B)$ of $A$.

For basic solvable factors $H$, the necessity of the conditions in Theorems~\ref{thm:Xia-2}--\ref{thm:Xia-4} is relatively easy to see (through the propositions in Section~\ref{sec:Xia-1} together with Proposition~\ref{thm:dValue}). For example, if $d(I)=d>1$ in the unitary case, then $H$ would be contained in some field extension subgroup of type $\GU_{2m/d}(q^d)$, not possible by~\cite[Theorem~A]{LPS1990}. The essential part to prove the sufficiency of these conditions is done in Subsection~\ref{subsec_UIorb}.
Then in Section~\ref{sec:Xia-6} we conclude the proof of Theorems~\ref{thm:Xia-2}--\ref{thm:Xia-4}.

Based on Theorems~\ref{thm:Xia-2}--\ref{thm:Xia-4}, we also prove the following corollary in Section~\ref{sec:Xia-6}. Let $q=p^f$ with prime $p$ and positive integer $f$, and let $\phi$ be the field automorphism of order $sf$ taking $p$-th power. Note that
\[
\GaU_{2m}(q)/\SU_{2m}(q)=\la\delta\ra{:}\la\phi\ra=\la\delta,\phi\mid\delta^{q+1}=\phi^{2f}=1,\,\delta^\phi=\delta^p\ra=\C_{q+1}{:}\C_{2f}
\]
with $\la\delta\ra=\GU_{2m}(q)/\SU_{2m}(q)$, and that
\[
\GaO_{2m}^+(q)/\Omega_{2m}^+(q)=\la\delta'\ra\times\la\delta''\ra\times\la\phi\ra=\C_2\times\C_{\gcd(2,q-1)}\times\C_f
\]
with $\la\delta'\ra=\mathrm{SO}_{2m}^+(q)/\Omega_{2m}^+(q)$ and $\la\delta''\ra=\mathrm{GO}_{2m}^+(q)/\mathrm{SO}_{2m}^+(q)$.

\begin{corollary}\label{cor_G0}
Let $\phi$, $\delta$, $\delta'$ and $\delta''$ be as above, and let $G$ and $B$ be as in Hypothesis~$\ref{hypo-1}$\,\eqref{hypo-1ii}--\eqref{hypo-1iv}. Then there exists some $H$ as in Hypothesis~$\ref{hypo-1}$ such that $G=HB$ if and only if $G$ lies in the following table,
\[
\begin{array}{lll}
\hline
L & G & \textup{Conditions}\\
\hline
\SU_{2m}(q) & L.(\la\delta^\ell\ra\la\delta^d\phi^e\ra)=L.\big(\C_{(q+1)/\ell}.\C_{2f/e}\big) &
e\textup{ divides }2f,\;\ell\textup{ divides }q+1,\;\eqref{cor_G0a},\;\eqref{cor_G0b},\;\eqref{cor_G0c}\\
\Omega_{2m}^+(q) & \textup{any} & q\not\equiv1\ (\bmod\;4) \\
 & L.\calO & q\equiv1\ (\bmod\;4),\ \calO\nleqslant\la\delta'',\phi\ra \\
\Sp_{2m}(q) & \textup{any} & \\
\hline
\end{array}
\]
where $d\in\{1,\dots,q+1\}$ such that the following conditions hold with $i=(q^{2m}-1)/(q+1)$:
\begin{enumerate}[\rm(a)]
\item\label{cor_G0a} $(p^e-1)\ell$ divides $(q^2-1)d$;
\item\label{cor_G0b} $\pi(\ell)\cap\pi(i)\subseteq\pi(2mf)\cap\pi(p^{me}-1)\setminus\pi(d)$;
\item\label{cor_G0c} if $\gcd(\ell,i)$ is even and $p^{me}\equiv3\pmod{4}$, then $\gcd(\ell,i)\equiv2\pmod{4}$.
\end{enumerate}
\end{corollary}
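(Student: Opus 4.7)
The plan is to combine Theorems \ref{thm:Xia-2}--\ref{thm:Xia-4} (which parametrize the admissible solvable factors $H$) with an analysis of the images of $H$ and $B$ modulo $L$. Since $H\cap L$ and $B\cap L$ are controlled by the respective theorems, the factorization $G=HB$ reduces, via the quotient $G\to G/L$, to the condition $G/L=(HL/L)(BL/L)$. The task is then to decide, in each case, which $G$ admit this quotient factorization for some admissible $H$.

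\emph{Symplectic case.} Taking $I=\{0\}$ and $S=S_0=\GL_1(q^m)$ the Singer cycle, I would obtain $H=U(0){:}S_0\leqslant L$ satisfying Theorem~\ref{thm:Xia-4}\,\eqref{thm:Xia-4d}, so that $L=HB_L$ with $B_L=B\cap L$. Because the Frobenius $\phi$ normalizes $\rmO_{2m}^-(q)$, the group $B$ projects onto all of $\GaSp_{2m}(q)/L$; in particular $BL\supseteq G$ for every $G$, whence $G=LB=HB_LB=HB$. Thus every $G$ works.

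\emph{Orthogonal case.} I would first determine $BL/L$: the subgroup $B\cap L=\Omega_{2m-1}(q)$ is normalized by $\la\delta'',\phi\ra$, while a spinor-norm calculation (for $q$ odd) shows that $\delta'$ normalizes $\Omega_{2m-1}(q)$ exactly when $q\not\equiv1\pmod 4$. If $q\not\equiv1\pmod 4$, then $BL=\GaO_{2m}^+(q)$, and taking a suitable $H\leqslant L$ from Theorem~\ref{thm:Xia-3}\,\eqref{thm:Xia-3a} (for instance $I=\{1\}$ and $S=S_0$) yields $G=HB$ for every $G$. If $q\equiv1\pmod 4$, then $BL=L\la\delta'',\phi\ra$, so $G=HB$ forces $HL/L$ to meet a class outside $\la\delta'',\phi\ra$; since $HL/L=SL/L$ with $S\leqslant T$, and the projection $T\to G/L$ realizes $\la\delta'\ra$ modulo $\la\delta'',\phi\ra$ precisely when $G$ itself contains a representative of that class, this is achievable if and only if $\calO\nleqslant\la\delta'',\phi\ra$.

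\emph{Unitary case (main technical part).} By Theorem~\ref{thm:Xia-2}, an admissible $H$ has the form $U(I){:}S$ with $d(I)=1$ and $S\leqslant T=\GaL_1(q^{2m})$ transitive on $(\bbF_{q^2}^m)_{(q+1)}$, subject to the additional condition $S\leqslant T\cap G$ for $H\leqslant G$. Parametrizing $G/L=\la\delta^\ell\ra{:}\la\delta^d\phi^e\ra$ and representing $S$ by its Foulser triple as in Subsection~\ref{sec:Xia-8}, the inclusion $S\leqslant T\cap G$ becomes a divisibility relation between the Foulser parameters of $S$ and the triple $(\ell,d,e)$. Applying Theorem~\ref{thm:trans} to $T$ together with the transitivity requirement on $(\bbF_{q^2}^m)_{(q+1)}$ (equivalently, $Z_{(q+1)}S$ transitive on $\bbF_{q^2}^m\setminus\{0\}$) then translates into arithmetic conditions on $(\ell,d,e)$ that I expect to match (a)--(c) verbatim. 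Conversely, granted (a)--(c), Theorem~\ref{thm:trans} produces a transitive $S\leqslant T\cap G$, and Theorem~\ref{thm:Xia-2} (with any $I$ satisfying $d(I)=1$, e.g.\ $I=\{1\}$) completes $H$ and delivers $G=HB$.

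\emph{Main obstacle.} The hardest step is the unitary case: matching the group-theoretic data $(G/L,\,S\leqslant T\cap G)$ with the arithmetic conditions (a)--(c). This demands a careful coordinate analysis of $T\cap G$ via the Foulser triple, coordinated with $(\ell,d,e)$, together with a direct but involved application of Theorem~\ref{thm:trans}; condition (c) in particular will come from the Sylow-$2$ subtlety in Theorem~\ref{thm:trans} when $2\in\sigma$ and $p^k\equiv 3\pmod 4$. A subsidiary obstacle in the orthogonal case is the spinor-norm computation identifying when $\delta'$ normalizes $\Omega_{2m-1}(q)$, but this is routine once the $q\bmod 4$ dichotomy is isolated.
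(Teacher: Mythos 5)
Your overarching plan — reducing $G=HB$ to the quotient condition $G/L=(HL/L)(BL/L)$ — cannot work, because in all three cases the subgroup $B=\N_1[G]$ (resp.\ $B\cap L=\mathrm{O}_{2m}^-(q)$) already surjects onto $G/L$: for instance in the orthogonal case the stabilizer of $\la v\ra$ contains products of two reflections in vectors of $v^\perp$ whose norms have nonsquare product, hence elements of class $\delta'$, for every odd $q$. So the quotient condition is automatically satisfied and detects none of the conditions in the table; in particular your spinor-norm claim that ``$\delta'$ normalizes $\Omega_{2m-1}(q)$ exactly when $q\not\equiv1\pmod 4$'' is false (the normalizer of $\Omega_{2m-1}(q)$ is the full stabilizer of $\la v\ra$, which meets every outer coset), and your proposed treatment of the orthogonal case collapses. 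The true obstruction lives inside $G\cap T\leqslant\GaL_1(q^{sm})$: by Theorems~\ref{thm:Xia-2}--\ref{thm:Xia-4} the existence of $H$ is equivalent to $G\cap T$ containing a subgroup transitive on the relevant point-class set, and the paper simply tests $G\cap T$ itself via its Foulser triple and Theorem~\ref{thm:Xia-1}/\ref{thm:trans}. In the orthogonal case the $q\bmod 4$ dichotomy comes from whether $-1\in\la a^2\ra$ (equivalently whether $\la a^{\gcd(2,q-1)}\ra$ is already transitive on $(\bbF_q^m)_{(\gcd(2,q-1))}$) and, failing that, from whether $G$ contains an element of class $\delta'(\delta'')^x\phi^y$ so that $G\cap T\supseteq\la a^2\ra\la a\varphi^k\ra$ — not from any normalizer computation. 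Note also that your choice $S=S_0$ with $H\leqslant L$ is illegitimate for odd $q$: the Singer generator has nonsquare spinor norm (its $\GL_m(q)$-determinant generates $\bbF_q^\times$), so $S_0\nleqslant\Omega_{2m}^+(q)$.

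In the unitary case — the technical heart of the corollary — you only state that you ``expect'' the translation to match (a)--(c); the paper actually carries this out by writing $\calO=\la\delta^\ell\ra\la\delta^d\phi^e\ra$, noting that condition~(a) is precisely the closure condition $(\delta^d\phi^e)^{2f/e}\in\la\delta^\ell\ra$ (a constraint on the parametrization of $G$, not an output of the transitivity analysis as your outline suggests), computing the Foulser triple of $G\cap T$ to be $(\ell,j,me)$ with $j\equiv d\pmod{\ell}$, and then applying Theorem~\ref{thm:Xia-1} with $i=(q^{2m}-1)/(q+1)$ to obtain (b) and (c), using that $\gcd(\ell,i)$ is coprime to $j$ if and only if it is coprime to $d$. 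The key simplification you miss is that one may take $S=G\cap T$ itself, so no divisibility matching over all possible $S$ is needed. Your symplectic argument is essentially correct and agrees in substance with the paper's (exhibit $H=U(0){:}\la a\ra\leqslant L$ and invoke Theorem~\ref{thm:Xia-4}\,\eqref{thm:Xia-4d}), but the orthogonal case and the unexecuted unitary case constitute genuine gaps.
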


\subsection{Determine the factor $K$}\label{sec:Xia-10}
\ \vspace{1mm}

Let us describe how to determine the factor $K$ in a factorization $G=HK$, given a pair $(G,H)$ as in Theorems~\ref{thm:Xia-2}--\ref{thm:Xia-4}. Note that, since $G=HB$, we have $G=HK$ if and only if $B=(H\cap B)K$. Then as $K\trianglerighteq B^{(\infty)}$ (see Table~\ref{TabLX2022}), it follows that $B=(H\cap B)K$ if and only if
\begin{equation}\label{eqn:Xia-19}
\overline{B}=\overline{H\cap B}\,\overline{K},
\end{equation}
where $\overline{\phantom{w}}\colon B\to B/B^{(\infty)}$ is the quotient modulo $B^{(\infty)}$. For basic factors $H$,
the intersection $H\cap B$ is determined in Propositions~\ref{Xia:Unitary01},~\ref{Xia:Omega01} and~\ref{Xia:Symplectic01}, so that the necessary and sufficient conditions for $K$ to satisfy $G=HK$ can be derived from~\eqref{eqn:Xia-19}. For non-basic factors $H$, the factorization $G=HK$ of $G$ is reduced to the factorization $A=H(A\cap K)$ for some $\max^-$ subgroup $A$ of $G$ as in the previous paragraph.

As an outcome of this approach, the ensuing proposition classifies all the possible $K$ with unspecified $H$.
Let $q=p^f$ with prime $p$ and positive integer $f$, let $\phi$ be the field automorphism of order $sf$ taking $p$-th power, and let $\la\delta'\ra=\mathrm{SO}_{2m}^+(q)/\Omega_{2m}^+(q)$. Define
\[
\ddot{G}=
\begin{cases}
\GaU_{2m}(q)&\textup{if $L=\SU_{2m}(q)$},\\
\Omega_{2m}^+(q).(\la(\delta')^{\gcd(2,q)}\ra\times\la\phi\ra)&\textup{if $L=\Omega_{2m}^+(q)$},\\
\GaSp_{2m}(q)&\textup{if $L=\Sp_{2m}(q)$}.
\end{cases}
\]
It is worth noting that $\ddot{G}$ is a subgroup of index $2$ in $\GaO_{2m}^+(q)$ if $L=\Omega_{2m}^+(q)$, and that $RT<\ddot{G}$.

\begin{proposition}\label{prop_K}
Let $\phi$, $\delta'$ and $\ddot{G}$ be as above, and let $G$, $K$ and $B$ be as in Hypothesis~$\ref{hypo-1}$\,\eqref{hypo-1ii}--\eqref{hypo-1iv} such that there exists some $H$ as in Hypothesis~$\ref{hypo-1}$ with $G=HB$. Then there exists some $H$ as in Hypothesis~$\ref{hypo-1}$ such that $G=HK$ if and only if $K$ satisfies the following conditions:
\begin{enumerate}[\rm(a)]
\item\label{prop_Ka} if $G\nleqslant\ddot{G}$, then $\GaO_{2m}^+(q)=\ddot{G}K$;
\item\label{prop_Kb} $N=J\big((K\cap\ddot{G})/B^{(\infty)}\big)$ with $N$ and $J$ in the following table,
\end{enumerate}
\[
\begin{array}{llll}
\hline
B^{(\infty)} & N & J \\
\hline
\SU_{2m}(q) & (\la\delta_1\ra\times\la\delta_2\ra){:}\la\phi\ra=(\C_{q+1}\times\C_{q+1}){:}\C_{2f} & Z{:}\la\phi\ra=\C_{q+1}{:}\C_{2f} \\
\Omega_{2m-1}(q) & \C_{\gcd(2,q-1)}\times\C_{\gcd(2,q-1)}\times\C_f & Z\times\la\phi\ra=\C_{\gcd(2,q-1)}\times\C_f \\
\Omega_{2m}^-(q) & \C_{2f} & \C_{2f} \\
\hline
\end{array}
\]
where $N=\Nor_{\ddot{G}}\big(B^{(\infty)}\big)/B^{(\infty)}$, $J=\Nor_{RT}\big(B^{(\infty)}\big)B^{(\infty)}/B^{(\infty)}$, $\la\delta_1\ra=\GU_{2m-1}(q)/\SU_{2m-1}(q)$, $\la\delta_2\ra=\GU_1(q)$, and $Z$ is the center of $\GU_{2m}(q)$ or $\mathrm{SO}_{2m}^+(q)$ in the unitary or orthogonal case respectively.
\end{proposition}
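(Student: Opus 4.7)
The plan is to translate the factorization $G=HK$ into a factorization in the small quotient $B/B^{(\infty)}$ and then to read off both conditions from the structure tables for $N$ and $J$. The starting observation, already noted in the text, is that since there exists some $H$ as in Hypothesis~\ref{hypo-1} with $G=HB$, the factorization $G=HK$ is equivalent to $B=(H\cap B)K$, and as $B^{(\infty)}\leqslant K$, this is in turn equivalent to
\[
\overline{B}=\overline{H\cap B}\cdot\overline{K}
\]
in $\overline{B}=B/B^{(\infty)}$, where $\overline{B}$ embeds into $N$. The entire analysis now takes place inside the (solvable) quotient $N$.

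Next I would dispose of the overflow outside $\ddot{G}$, which arises only in the orthogonal case where $\ddot{G}$ has index $2$ in $\GaO_{2m}^+(q)$. Since $H\leqslant RT\leqslant\ddot{G}$, any factorization $G=HK$ with $G\nleqslant\ddot{G}$ forces $K$ to project onto $G/(G\cap\ddot{G})$, equivalently onto $\GaO_{2m}^+(q)/\ddot{G}$; this is exactly condition~\eqref{prop_Ka}. Granted~\eqref{prop_Ka}, one may replace $(G,K)$ by $(G\cap\ddot{G},K\cap\ddot{G})$ and reduce to the case $G\leqslant\ddot{G}$. Inside $\ddot{G}$, I would then pin down $\overline{H\cap B}$ using the explicit descriptions of $H\cap B$ supplied by Propositions~\ref{Xia:Unitary01},~\ref{Xia:Omega01} and~\ref{Xia:Symplectic01}. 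Because $H\leqslant RT$, the intersection satisfies $\overline{H\cap B}\leqslant J$, and choosing the solvable factor $H$ as large as possible inside its family realizes the equality $\overline{H\cap B}=J$. Consequently, the existence of some $H$ realizing the displayed equation becomes equivalent to
\[
\overline{B}=J\cdot\overline{K\cap\ddot{G}}.
\]

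Finally, I would upgrade this to the stated condition $N=J\cdot\overline{K\cap\ddot{G}}$ in~\eqref{prop_Kb}. The key structural fact is $J\cdot\overline{B}=N$, which is visible directly from the tables defining $N$ and $J$: the quotient $N/J$ is picked up by the ``diagonal'' component of $\overline{B}$ -- the extra $\C_{q+1}$ factor coming from $\GU_{2m-1}(q)/\SU_{2m-1}(q)$ in the unitary case, the $\C_{\gcd(2,q-1)}$ factor coming from the orthogonal diagonal, and nothing extra in the symplectic case. Combining this identity with the reduction in the first paragraph and the orthogonal overflow above yields the full ``if and only if'' in the proposition.

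The main obstacle will be the explicit identification $\overline{H\cap B}=J$ that makes the chain of equivalences tight. This requires precise tracking of the intersection of a maximal solvable factor $H=P{:}S$ with the $1$-space stabilizer $B$, accounting for the unipotent radical $P$, the Levi cyclic torus inside $T=\GaL_1(q^{sm})$, and the diagonal and field outer components. The orthogonal case in particular has three outer components $\la\delta'\ra\times\la\delta''\ra\times\la\phi\ra$ to coordinate, which is where most of the bookkeeping sits; once the shape of $H\cap B$ is known from Propositions~\ref{Xia:Unitary01}--\ref{Xia:Symplectic01}, the remaining transition to $J$ and $N$ is essentially a table lookup.
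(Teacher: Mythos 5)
Your overall reduction is not the one the paper uses, and as written it contains a genuine gap: both bridging identities that your chain of equivalences rests on are false in general. First, for the maximal admissible $H$ inside $G$ (namely $G\cap RT$ up to conjugacy) one does \emph{not} have $\overline{H\cap B}=J$: the group $J=\Nor_{RT}\big(B^{(\infty)}\big)B^{(\infty)}/B^{(\infty)}$ is computed inside the full group $RT\leqslant\ddot{G}$, whereas $H\cap B\leqslant G$, so $\overline{H\cap B}$ is bounded by the image of $G\cap\ddot{B}$ and can be much smaller than $J$ when $G<\ddot{G}$. Second, $J\,\overline{B}=N$ also fails in general (in the unitary case with $G=L=\SU_{2m}(q)$ the product $J\,\overline{B}$ has index $\gcd(2m,q+1)$ in $N$), and even where it holds it only gives the implication from $\overline{B}=J\,\overline{K\cap\ddot{G}}$ to $N=J\,\overline{K\cap\ddot{G}}$, not the converse, since $J\nleqslant\overline{B}$ in general. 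A concrete counterexample to your intermediate criterion: take $L=G=\Sp_{2m}(q)$ with $q=2^f$, $f\geqslant2$, and $K=B^{(\infty)}=\Omega_{2m}^-(q)$ (the standing hypothesis holds here by Corollary~\ref{cor_G0}). Then $\overline{B}=\C_2$ while $J=N=\C_{2f}$, so your condition $\overline{B}=J\,\overline{K\cap\ddot{G}}$ is unsatisfiable, yet the factorization $\Sp_{2m}(q)=H\,\Omega_{2m}^-(q)$ with solvable $H=q^m{:}S$ does exist, exactly as condition~\eqref{prop_Kb} ($N=J$) predicts. So working inside $B/B^{(\infty)}$ via equation~\eqref{eqn:Xia-19} and then trying to inflate to $N$ cannot give the stated ``if and only if''.

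The paper's argument instead stays at the level of the overgroups throughout: writing $\ddot{H}=RT$ and $\ddot{B}=\Nor_{\ddot{G}}\big(B^{(\infty)}\big)$, from $G=HK$ with $H\leqslant\ddot{H}$ one gets $\ddot{G}=\ddot{H}(K\cap\ddot{G})$ and hence, since $K\cap\ddot{G}\leqslant\ddot{B}$, the factorization $\ddot{B}=(\ddot{H}\cap\ddot{B})(K\cap\ddot{G})$, which modulo $B^{(\infty)}$ is precisely $N=J\,\overline{K\cap\ddot{G}}$; conversely, condition~\eqref{prop_Kb} gives back $\ddot{G}=\ddot{H}(K\cap\ddot{G})$, and intersecting with $G$ (taking $H=G\cap\ddot{H}$, with condition~\eqref{prop_Ka} absorbing the case $G\nleqslant\ddot{G}$) yields $G=HK$. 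That lifting step is the missing idea in your proposal. Your treatment of condition~\eqref{prop_Ka} is fine and matches the paper, but you also leave the second half of the proof --- the identification of $N$ and $J$ in the table --- unproved: this is not a table lookup, e.g.\ in the unitary case one must exhibit $\rho=b^{-1}\varphi b$ fixing $\lambda f_1$ with $\overline{\rho}=\phi$ so that $J=\overline{Z}{:}\la\phi\ra$, and in the symplectic case one needs the reflection in $R\cap B$ together with Remark~\ref{rem:Xia-4} to see that $J$ is all of $\C_{2f}$.
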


We anticipate that it would be too messy for an explicit list of the factors $K$ for all the possible pairs $(G,H)$. However, via the above described approach, specific questions on the factorization $G=HK$ with solvable $H$ can be tackled. For example, in the next theorem, we are able to determine the exact factorizations in symplectic groups.

\begin{theorem}\label{thm:Xia-5}
Let $G$ be an almost simple group with socle $\Sp_{2m}(q)$ such that $m\geqslant2$. If $G=HK$ with $H$ solvable, $K$ core-free and $H\cap K=1$, then $m$ is odd, $K=\Omega_{2m}^-(q).\mathcal{O}$ such that $|\mathcal{O}|$ is an odd divisor of $|G/\Sp_{2m}(q)|$, and $H=U(0){:}S=q^m{:}S$ for some transitive subgroup $S$ of $\GaL_1(q^m)$ on $\bbF_q^m\setminus\{0\}$ with stabilizer of order $|G/\Sp_{2m}(q)|/|\mathcal{O}|$.
\end{theorem}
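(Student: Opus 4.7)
The plan is to reduce to Hypothesis~\ref{hypo-1}\,\eqref{hypo-1iv}, apply Theorem~\ref{thm:Xia-4} to describe the possible solvable factors $H$, and then use the exactness condition $H\cap K=1$ to successively pin down the index set $I$, the factor $K$, and the parity of $m$.

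\emph{Reduction and shape of $H$.} I would first combine the classification of solvable factorizations summarised in Table~\ref{TabLX2022} with the small exceptions in~\cite[Table~1.2]{LX2022} (checkable by \textsc{Magma}) to reduce to Hypothesis~\ref{hypo-1}\,\eqref{hypo-1iv}. Row~4 of Table~\ref{TabLX2022} reduces to row~3 via the graph automorphism of $\Sp_4(q)$, while row~5 (for $\Sp_4(q)$ with $q$ odd and $K\supseteq\Sp_2(q^2)=\Omega_4^-(q).2$) can be ruled out for exactness by observing that the non-identity element of $\Sp_2(q^2)/\Omega_4^-(q)$ lies in every candidate solvable factor. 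Hence $q$ is even, $\Omega_{2m}^-(q)\leq K\leq B$ with $B\cap L=\mathrm{O}_{2m}^-(q)$, and by Theorem~\ref{thm:Xia-4} we may write $H=U(I){:}S$ with $S\leq T=\GaL_1(q^m)$ transitive on $\bbF_q^m\setminus\{0\}$.

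\emph{Forcing $I=\{0\}$ and computing $K$.} From $|H||K|=|G|$ and $|K|\geq|\Omega_{2m}^-(q)|$, a $p$-part comparison (using $q=p^f$ and $\gcd(q^m-1,p)=1$) forces $|U(I)|_p=q^m$; the decomposition formula for $U(I)$ then gives $|I|=1$ with the unique index $i\neq m/2$. If $i\geq1$, Theorem~\ref{thm:Xia-4}\,\eqref{thm:Xia-4a}--\eqref{thm:Xia-4c} restricts $q$ to $\{2,4\}$, and the explicit formula for $H\cap B$ in Proposition~\ref{Xia:Symplectic01} supplies a non-identity element of $H\cap K$, contradicting exactness. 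So $I=\{0\}$ and $H=q^m{:}S$. Viewing the exactness $B=(H\cap B)K$ with $(H\cap B)\cap K=1$ in the quotient $\overline{B}=B/B^{(\infty)}$ as in Subsection~\ref{sec:Xia-10}, and using that $\overline{H\cap B}$ surjects onto the $\mathrm{O}_{2m}^-(q)/\Omega_{2m}^-(q)=\C_2$-factor of $\overline{B}$, yields $K=\Omega_{2m}^-(q).\mathcal{O}$ with $|\mathcal{O}|$ an odd divisor of $|G/L|$ and $S$ stabilizing any given non-zero vector of $\bbF_q^m$ in a subgroup of order $|G/L|/|\mathcal{O}|$.

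\emph{Parity of $m$ and existence.} Exactness requires $S\cap\Omega_{2m}^-(q)=1$. The cyclic Singer part $\GL_1(q^m)\cap S\leq\C_{q^m-1}$ has odd order (as $q^m-1$ is odd for $q$ even), hence Dickson invariant $0$; by a choice of $\mathrm{O}_{2m}^-(q)$-conjugate this part can be arranged to avoid $\mathrm{O}_{2m}^-(q)$ altogether, removing it as an obstruction. The real obstruction is carried by the Galois factor $(T\cap\Sp_{2m}(q))/\GL_1(q^m)\cong\C_m$: when $m$ is even this contains the involution $\gamma$ induced by $x\mapsto x^{q^{m/2}}$ acting diagonally on $U\oplus W$, whose fixed subspace has $\bbF_q$-dimension $m$, so $\mathrm{rank}(\gamma-1)=m$ and its Dickson invariant in any $\mathrm{O}_{2m}^-(q)$ containing it equals $m\bmod2=0$; thus $\gamma\in\Omega_{2m}^-(q)\cap S$, contradicting exactness regardless of the choice of form. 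For $m$ odd the Galois $\C_m$ has no involution, the analogous obstruction vanishes, and Theorem~\ref{thm:trans} supplies transitive $S\leq\GaL_1(q^m)$ on $\bbF_q^m\setminus\{0\}$ of any prescribed stabilizer order $|G/L|/|\mathcal{O}|$ with $|\mathcal{O}|$ odd, so the calculations reverse to produce the claimed exact factorization.

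The parity analysis in the third paragraph is the main technical hurdle: one needs to verify carefully, via the Dickson-invariant calculation for the Galois involution together with the conjugacy structure of $\mathrm{O}_{2m}^-(q)$ in $\Sp_{2m}(q)$, that for $m$ even no admissible choice of $S$ and $K$ can simultaneously avoid the obstruction, so that exactness is impossible precisely when $m$ is even.
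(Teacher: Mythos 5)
Your overall skeleton (reduce to Hypothesis~\ref{hypo-1}\,\eqref{hypo-1iv}, apply Theorem~\ref{thm:Xia-4}, then exploit $H\cap K=1$) matches the paper, but the decisive step --- ruling out $m$ even --- is not proved by your argument. The obstruction to exactness is not carried by the Galois part of $S$: a transitive subgroup $S\leqslant\GaL_1(q^m)$ on $\bbF_q^m\setminus\{0\}$ need not contain the involution $x\mapsto x^{q^{m/2}}$ at all (the Singer cycle $\GL_1(q^m)$ is already regular and contains no field automorphism), so your Dickson-invariant computation for $\gamma$ only excludes those candidates whose $S$-part happens to contain $\gamma$, and you yourself defer the verification that no admissible $S$ and $K$ avoid the obstruction. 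Worse, your premise that the unipotent and Singer parts can be ``arranged to avoid $\mathrm{O}_{2m}^-(q)$'' is false: by Proposition~\ref{thm_UIorbit}\,\eqref{thm_UIorbitc} every $U(0)$-orbit on $[G:B]$ has length $q^m/2$, so $U(0)$ meets every conjugate of $B\cap L=\mathrm{O}_{2m}^-(q)$ in a subgroup of order exactly $2$, and this intersection cannot be conjugated away. The paper's proof shows that this unavoidable involution is the real crux: embedding $H=U(0){:}S$ in the field-extension subgroup $M$ with $M\cap L=\Sp_2(q^m){:}m$, the nontrivial element of $U(0)\cap\mathrm{O}_2^-(q^m)$ is $\bbF_{q^m}$-linear, hence its fixed space has $\bbF_q$-dimension a multiple of $m$; when $m$ is even its Dickson invariant is therefore trivial, so it lies in $\Omega_{2m}^-(q)=B^{(\infty)}\leqslant K$ while lying in $H$, contradicting $H\cap K=1$. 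Your proposal never examines the unipotent intersection, so the parity claim remains unproved (and your derivation of $|\mathcal{O}|$ odd via ``$\overline{H\cap B}$ surjects onto the $\C_2$-factor'' presupposes exactly the statement that fails when $m$ is even; the clean route is $K\cap L=\Omega_{2m}^-(q)$, from~\cite[Theorem~3]{BL2021} or from the order-$2$ intersection above, plus cyclicity of $\GaO_{2m}^-(q)/\Omega_{2m}^-(q)$).

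There are also gaps in how you force $I=\{0\}$. The assertion that a $p$-part comparison forces $|U(I)|_p=q^m$ is not justified: for instance $I=\{m/2\}$ with $(m,q)=(4,2)$ or $(6,2)$ survives the crude order bounds, since $|S|$ can absorb a factor up to $(q^m-1)mf$. Moreover, Proposition~\ref{Xia:Symplectic01}, which you invoke to produce a nonidentity element of $H\cap K$ when $0\notin I$, carries the hypothesis $|P|/|P\cap B|=q^m/2$, which by Proposition~\ref{thm_UIorbit} fails for $0\notin I$ unless $q^{d(I)}=2$; and even where it applies it gives information about $H\cap B$, not directly about $H\cap K$. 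The paper's route is different and complete: if $0\notin I$ then $H$ lies (up to conjugacy) in the maximal subgroup $A$ with $A\cap L=\mathrm{O}_{2m}^+(q)$, so $A=H(A\cap K)$ would be an exact factorization of an almost simple group with plus-type orthogonal socle and solvable factor, which~\cite[Theorem~3]{BL2021} rules out; and once $0\in I$, any additional nonzero index is killed by the $2$-part bound $|H|_2\geqslant|U(0)||U(i)|\geqslant q^{3m/2}>(|G|/|K|)_2$. The same citation of~\cite[Theorem~3]{BL2021} also yields $q$ even, $m\geqslant3$, $H\leqslant\Pa_m[G]$ and $K\cap L=\Omega_{2m}^-(q)$ at the outset, replacing your ad hoc treatment of rows~4 and~5 of Table~\ref{TabLX2022} (your row-5 argument as stated does not parse, since $\Omega_4^-(q)\cong\PSL_2(q^2)$ is not a subgroup of $\Sp_2(q^2)$ in the way you use it).
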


The proof of Theorem~\ref{thm:Xia-5} is given at the end of Section~\ref{sec:Xia-6}.

\section{Subgroups of $\GaL_1(p^f)$}\label{sec:Xia-2}

Throughout this section, let $p$ be a prime, $f$ be a positive integer, $\omega$ be a generator of $\bbF_{p^f}^\times$, $a\in\GL_1(p^f)$ be the right multiplication of $\omega$, and $\varphi$ be the Frobenius of $\bbF_{p^f}$ taking $p$-th power.
The ensuing Subsection~\ref{sec:Xia-5} is a discussion on different parameters to represent subgroups of $\GaL_1(p^f)$.
Notably, we find a new triple of parameters, which is handy in the classification of transitive subgroups of $\GL_1(p^f)$. Then at the end of Subsection~\ref{sec:Xia-4} we prove Theorem~\ref{thm:trans} and Corollary~\ref{thm:min}. These will be applied in Subsection~\ref{sec:Xia-3} to characterize the subgroup $S$ in Theorems~\ref{thm:Xia-2}--\ref{thm:Xia-4}.

\subsection{Represent subgroups of $\GaL_1(p^f)$}\label{sec:Xia-5}
\ \vspace{1mm}

For a subgroup $H$ of $\GaL_1(p^f)$, there exists a divisor $\ell$ of $p^f-1$ such that $H\cap\la a\ra=\la a^\ell\ra$. Consider the quotient $\overline{\phantom{n}}$ of $\GaL_1(p^f)$ modulo $\GL_1(p^f)=\la a\ra$. Since the image $\la\overline{\varphi}\ra$ is a cyclic group of order $f$, we have $\overline{H}=\la(\overline{\varphi})^k\ra=\la\overline{\varphi^k}\ra$ for some divisor $k$ of $f$.
Hence there exists $d\in\la a\ra$ such that $d\varphi^k\in H$ and hence
\begin{equation}\label{eqn:Xia-3}
H=(H\cap\la a\ra)\la d\varphi^k\ra=\la a^\ell\ra\la d\varphi^k\ra=\la a^\ell,d\varphi^k\ra.
\end{equation}
Let $j\in\{0,1,\dots,\ell-1\}$ such that $d\in\la a^\ell\ra a^j$. Then $H=\la a^\ell,a^j\varphi^k\ra=\la a^\ell\ra\la a^j\varphi^k\ra$.
The idea of representing $H$ in terms of the parameters $\ell$, $j$ and $k$ originates from Fouler~\cite{Foulser1964} and is developed by the first author, Lim and Praeger~\cite{LLP2009} to characterize various properties of $H$ including its transitivity on $\bbF_{p^f}^\times$. However, the characterization of transitivity there (see~\cite[Lemma~4.7]{LLP2009}) is given by rather sophisticated conditions on $(\ell,j,k)$, which makes it difficult to describe the group structure of transitive $H$, let alone classify the minimally transitive and regular ones. To achieve our aim in this paper, we start our approach by changing the parameter $j$.

\begin{lemma}\label{lem:Xia-6}
Let $H\leqslant\GaL_1(p^f)$ with divisors $\ell$ and $k$ of $p^f-1$ and $f$, respectively, such that $H\cap\la a\ra=\la a^\ell\ra$ and $\la a\ra H=\la a\ra\la\varphi^k\ra$, and let $\sigma=\pi(\ell)$. Then there exists $j$ in $\{1,\ldots,p^f-1\}$ divisible by $(p^f-1)_{\sigma'}$ such that $H=\la a^\ell,a^j\varphi^k\ra=\la a^\ell\ra\la a^j\varphi^k\ra$.
\end{lemma}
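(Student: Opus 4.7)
The approach is to take the representation $H=\la a^\ell\ra\la a^{j_0}\varphi^k\ra$ already obtained in~\eqref{eqn:Xia-3} (for some $j_0\in\{0,1,\dots,\ell-1\}$) and to slide $j_0$ along its coset modulo $\ell$ until it becomes divisible by $n_{\sigma'}$, where $n:=p^f-1$. The freedom to do so comes from the fact that for any $t\in\mathbb{Z}$ the element $a^{t\ell}$ lies in $\la a^\ell\ra\subseteq H$, so $a^{t\ell}(a^{j_0}\varphi^k)=a^{j_0+t\ell}\varphi^k$ also lies in $H$, and $\la a^\ell,a^{j_0+t\ell}\varphi^k\ra=\la a^\ell,a^{j_0}\varphi^k\ra=H$. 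Thus the task reduces to finding $t$ with $j_0+t\ell\equiv 0\pmod{n_{\sigma'}}$.

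The heart of the argument is the observation that $\gcd(\ell,n_{\sigma'})=1$: since $\sigma=\pi(\ell)$, every prime dividing $\ell$ lies in $\sigma$, while by definition $n_{\sigma'}$ has no prime factor in $\sigma$. Granted this coprimality, the Chinese Remainder Theorem solves the simultaneous congruences $j\equiv j_0\pmod\ell$ and $j\equiv 0\pmod{n_{\sigma'}}$, with the solution unique modulo $\ell\,n_{\sigma'}$. Since $\ell\mid n_\sigma$, we have $\ell\,n_{\sigma'}\mid n_\sigma n_{\sigma'}=n$, so a representative $j\in\{0,1,\dots,n-1\}$ exists; if it happens to be $0$, I would replace it by $j=n$, which is still divisible by $n_{\sigma'}$. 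Either way the resulting $j$ lands in $\{1,\dots,p^f-1\}$ and has the required divisibility.

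To close the loop, because $j\equiv j_0\pmod\ell$ the element $a^{j-j_0}$ lies in $\la a^\ell\ra$, so $a^j\varphi^k$ and $a^{j_0}\varphi^k$ differ by an element of $\la a^\ell\ra$; hence $\la a^\ell,a^j\varphi^k\ra=\la a^\ell,a^{j_0}\varphi^k\ra=H$. The product form $H=\la a^\ell\ra\la a^j\varphi^k\ra$ is then automatic, since conjugation by $\varphi^k$ sends $a^\ell$ to $a^{\ell p^k}\in\la a^\ell\ra$, so $\la a^\ell\ra$ is normalised by $a^j\varphi^k$. The single substantive input is the coprimality $\gcd(\ell,n_{\sigma'})=1$, which is an immediate consequence of the definition of $\sigma$; I do not foresee any obstacle beyond this small piece of bookkeeping.
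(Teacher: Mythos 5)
Your proposal is correct and is essentially the paper's argument in arithmetic rather than group-theoretic clothing: the paper replaces the coset representative $d$ of $\la a^\ell\ra$ by its $\sigma$-part, using $\la a\ra=\la a\ra_{\sigma'}\times\la a\ra_\sigma$ together with $\la a\ra_{\sigma'}\leqslant\la a^\ell\ra$, while you achieve the same replacement by solving $j\equiv j_0\pmod{\ell}$, $j\equiv0\pmod{(p^f-1)_{\sigma'}}$ via the Chinese Remainder Theorem, the enabling fact $\gcd(\ell,(p^f-1)_{\sigma'})=1$ being exactly the paper's observation that $\pi(\ell)=\sigma$. The remaining steps (generators differing by an element of $\la a^\ell\ra$, and $\la a^\ell\ra$ being normalised by $a^j\varphi^k$ to get the product form) are sound, so no gap remains.
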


\begin{proof}
Noticing that $\ell$ divides $(p^f-1)_{\sigma}$, we have $\la a^\ell\ra\geqslant\la a\ra_{\sigma'}$. Let $d\in\la a\ra$ such that $H=\la a^\ell,d\varphi^k\ra$ as in~\eqref{eqn:Xia-3}. Since $\la a\ra=\la a\ra_{\sigma'}\times\la a\ra_\sigma$, there exists $d'\in\la a\ra_\sigma$ such that $d\in\la a\ra_{\sigma'}d'$.
Since $\la a\ra_\sigma=\la a^{(p^f-1)_{\sigma'}}\ra$, there exists $j$ in $\{1,\ldots,p^f-1\}$ divisible by $(p^f-1)_{\sigma'}$ such that $d'=a^j$. Thus $d\in\la a^\ell\ra d'=\la a^\ell\ra a^j$, and so $H=\la a^\ell,d\varphi^k\ra=\la a^\ell,a^j\varphi^k\ra=\la a^\ell\ra\la a^j\varphi^k\ra$.
\end{proof}

\begin{definition}\label{def}
Let $H$, $\ell$, $j$ and $k$ be as in Lemma~$\ref{lem:Xia-6}$.  We call $(\ell,j,k)$ a \emph{Foulser triple} of $H$. Let $\sigma=\pi(\ell)$, $b=a^{(p^f-1)_{\sigma'}\ell}$ and $c=a^j$. The we call $(\sigma,b,c,k)$ a \emph{Foulser quadruple} of $H$.
\end{definition}

To prove some properties of Foulser quadruples, we need the consequence of a formula in the following lemma that calculates powers of an element in $\GaL_1(p^f)$. The proof of the formula is routine by induction.

\begin{lemma}\label{lem:Xia-5}
Let $g\in\GL_1(p^f)$, let $k\in\{1,\ldots,f\}$, and let $i$ be a positive integer. Then
\[
(g\varphi^k)^i=g^\frac{1-p^{(f-k)i}}{1-p^{f-k}}\varphi^{ki}=g^\frac{(p^{ki}-1)p^{k+fi-ki}}{p^k-1}\varphi^{ki}.
\]
In particular, if $i$ is divisible by the order of $\varphi^k$, then $(g\varphi^k)^i=g^{(p^f-1)p^k/(p^k-1)}$.
\end{lemma}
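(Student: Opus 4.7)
The plan is to prove the formula by a direct induction on $i$, using only the relation $a^{\varphi}=a^p$ that defines the semidirect product $\GaL_1(p^f)=\la a\ra{:}\la\varphi\ra$. The one auxiliary fact I would record first is that for $g\in\GL_1(p^f)$ and any integer $m$, the conjugation $\varphi^{-m}g\varphi^m=g^{p^m}$ rearranges to $\varphi^m g=g^{p^{-m}}\varphi^m$, where $p^{-m}$ is interpreted modulo $|g|$; since $|g|$ divides $p^f-1$ we have $p^f\equiv1\pmod{|g|}$, and in particular $p^{-ki}\equiv p^{(f-k)i}\pmod{|g|}$ for the indices that appear.

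The base case $i=1$ is immediate, since then $(1-p^{f-k})/(1-p^{f-k})=1$. For the inductive step, assuming $(g\varphi^k)^i=g^{E_i}\varphi^{ki}$ with $E_i:=(1-p^{(f-k)i})/(1-p^{f-k})$, I would compute
\begin{align*}
(g\varphi^k)^{i+1}=g^{E_i}\varphi^{ki}\cdot g\varphi^k=g^{E_i}\cdot g^{p^{(f-k)i}}\varphi^{k(i+1)}=g^{E_i+p^{(f-k)i}}\varphi^{k(i+1)},
\end{align*}
and the telescoping identity $E_i+p^{(f-k)i}=(1-p^{(f-k)(i+1)})/(1-p^{f-k})=E_{i+1}$ closes the induction, establishing the first equality.

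For the second equality it suffices to check the two exponents agree modulo $p^f-1$, since $g^{p^f-1}=1$. Expanding the first form as the geometric sum $\sum_{j=0}^{i-1}p^{j(f-k)}$ and the second as $p^{k+fi-ki}\sum_{j=0}^{i-1}p^{jk}$, and using $p^f\equiv1\pmod{p^f-1}$ to replace $p^{f-k}$ by $p^{-k}$ and $p^{fi}$ by $1$, both collapse to $\sum_{j=0}^{i-1}p^{-jk}\pmod{p^f-1}$. The \emph{in particular} clause then follows: when the order of $\varphi^k$ divides $i$ we have $f\mid ki$ and hence $\varphi^{ki}=1$, and the substitution $j\mapsto i-1-j$ in the last geometric series turns it into $p^{-k(i-1)}\sum_{j=0}^{i-1}p^{jk}\equiv p^k(p^f-1)/(p^k-1)\pmod{p^f-1}$ after one more use of $p^{ki}\equiv1$. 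As the author notes, no individual step exceeds a line of arithmetic, so there is no real obstacle; the only point requiring care is that all exponent manipulations are carried out modulo $|g|$ rather than in $\Z$.
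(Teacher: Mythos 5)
Your induction for the first identity is correct, and it is exactly the routine induction the paper has in mind; likewise your point that the two closed forms of the exponent need only (and do) agree modulo $p^f-1$, because $g^{p^f-1}=1$, is the right way to read the second equality. (When $k=f$ the fraction $\frac{1-p^{(f-k)i}}{1-p^{f-k}}$ is formally $0/0$ and must be read as the geometric sum $\sum_{j=0}^{i-1}p^{j(f-k)}=i$; this is an issue with the statement's notation rather than with your argument.)

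The last step of your treatment of the ``in particular'' clause, however, is not valid. You pass from $\sum_{j=0}^{i-1}p^{jk}=(p^{ki}-1)/(p^k-1)$ to $(p^f-1)/(p^k-1)$ ``after one more use of $p^{ki}\equiv1$'', but $p^k-1$ is not invertible modulo $p^f-1$, so $p^{ki}$ cannot be replaced by $1$ inside that quotient. Writing $ki=fl$ (and assuming $k\mid f$, which is needed anyway for $(p^f-1)p^k/(p^k-1)$ to be an integer), one gets $\sum_{j=0}^{i-1}p^{jk}\equiv l\,(p^f-1)/(p^k-1)\pmod{p^f-1}$, so your congruence --- and indeed the ``in particular'' sentence as literally stated --- holds only when $l\equiv1\pmod{p^k-1}$, for instance when $i$ is exactly the order of $\varphi^k$, i.e.\ $ki=f$. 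For a proper multiple it can fail: with $p=2$, $f=4$, $k=2$, $i=4$ and $g$ a generator of $\GL_1(2^4)$, the main formula gives $(g\varphi^2)^4=g^{85}=g^{10}$, whereas $g^{(p^f-1)p^k/(p^k-1)}=g^{20}=g^{5}$; the correct general value is $g^{(ki/f)(p^f-1)p^k/(p^k-1)}$. Since the paper only ever invokes this clause with $i=f/k$ (in \eqref{eqn:Xia-4} of the proof of Lemma~\ref{lem:Xia-3} and in \eqref{eqn:Xia-9} of the proof of Theorem~\ref{thm:trans}), the repair is simply to prove it for $ki=f$, where your computation becomes exact: $\sum_{j=0}^{i-1}p^{jk}=(p^f-1)/(p^k-1)$ and $p^{-k(i-1)}\equiv p^k\pmod{p^f-1}$.
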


We list some properties of Foulser triple and quadruple of a subgroup $H$ of $\GaL_1(p^f)$ in the following lemma, where properties~\eqref{lem:Xia-3a}--\eqref{lem:Xia-3c} can be viewed as definitions of $\sigma$, $b$ and $c$ directly from $H$.

\begin{lemma}\label{lem:Xia-3}
Let $H$, $\ell$, $j$, $k$, $\sigma$, $b$ and $c$ be as in Definition~$\ref{def}$. Then the following statements hold:
\begin{enumerate}[\rm(a)]
  \item\label{lem:Xia-3a} $\la a\ra_{\sigma'}$ is the largest Hall subgroup of $\la a\ra$ contained in $H$;
  \item\label{lem:Xia-3b} $H\cap\la a\ra=\la a\ra_{\sigma'}\la b\ra=\la a\ra_{\sigma'}\times\la b\ra$ with $b\in\la a\ra_\sigma$;
  \item\label{lem:Xia-3c} $H=\la a\ra_{\sigma'}{:}(\la b\ra\la c\varphi^k\ra)=(\la a\ra_{\sigma'}\times\la b\ra)\la c\varphi^k\ra$ with $c\in\la a\ra_\sigma$;
  \item\label{lem:Xia-3d} $\ell=|a|_\sigma/|b|=(p^f-1)_\sigma/|b|$, and in particular, $\pi(|a|_\sigma/|b|)=\sigma$;
  \item\label{lem:Xia-3e} $c^{(p^f-1)/(p^k-1)}\in\la b\ra$;
  \item\label{lem:Xia-3f} $(p^k-1)\ell$ divides $(p^f-1)_\sigma j$;
  \item\label{lem:Xia-3g} $|H|=(q-1)_{\sigma'}|b|f/k=(q-1)f/(\ell k)$.
\end{enumerate}
\end{lemma}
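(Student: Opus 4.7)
The plan is to derive all seven items directly from the explicit descriptions of $b$ and $c$ in the Foulser quadruple, combined with the power formula of Lemma~\ref{lem:Xia-5}, working throughout inside the cyclic group $\la a\ra$ where every subgroup has a canonical Hall $\sigma$-$\sigma'$ decomposition.

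For items (a), (b) and (d), I would first note that $\sigma=\pi(\ell)$ forces $\ell\mid(p^f-1)_\sigma=|a|_\sigma$, so $\la a\ra_{\sigma'}\leqslant\la a^\ell\ra=H\cap\la a\ra$. Maximality in (a) follows because the Hall subgroups of $\la a\ra$ are precisely the $\la a\ra_\tau$ for $\tau\subseteq\pi(p^f-1)$, and $\la a\ra_\tau\leqslant\la a^\ell\ra$ forces $\tau\subseteq\sigma'$. Restricting the Hall decomposition $\la a\ra=\la a\ra_{\sigma'}\times\la a\ra_\sigma$ to $\la a^\ell\ra$ then yields $\la a^\ell\ra=\la a\ra_{\sigma'}\times\la b\ra$ with $b=a^{(p^f-1)_{\sigma'}\ell}\in\la a\ra_\sigma$ of order $(p^f-1)_\sigma/\ell$, proving (b) and (d) simultaneously.

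Next, I would establish (e) before attacking (c). Since $(c\varphi^k)^{f/k}\in H\cap\la a\ra$, Lemma~\ref{lem:Xia-5} gives $(c\varphi^k)^{f/k}=c^{(p^f-1)p^k/(p^k-1)}$, and as $\gcd(p^k,p^f-1)=1$ this element generates the same cyclic group as $c^{(p^f-1)/(p^k-1)}$. Because $c=a^j\in\la a\ra_\sigma$ (as $(p^f-1)_{\sigma'}\mid j$), the power $c^{(p^f-1)/(p^k-1)}$ is a $\sigma$-element of $H\cap\la a\ra=\la a\ra_{\sigma'}\times\la b\ra$ and so lies in $\la b\ra$, giving (e). Item (f) then follows by translating the membership $c^{(p^f-1)/(p^k-1)}\in\la b\ra=\la a^{(p^f-1)_{\sigma'}\ell}\ra$ into the divisibility $(p^f-1)_{\sigma'}\ell\mid j(p^f-1)/(p^k-1)$, which rearranges to $(p^k-1)\ell\mid(p^f-1)_\sigma j$.

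For (c), the key observation is that $\la b\ra$ is normalized by $c\varphi^k$: $c$ commutes with $b$ inside $\la a\ra$, and $\varphi^k$ acts on $\la a\ra$ as the $p^k$-th power map, which stabilizes the cyclic subgroup $\la b\ra$. Hence $\la b\ra\la c\varphi^k\ra$ is a subgroup, and the factorization $H=(H\cap\la a\ra)\la c\varphi^k\ra=(\la a\ra_{\sigma'}\times\la b\ra)\la c\varphi^k\ra$ is immediate from~\eqref{eqn:Xia-3}. For the semidirect product assertion, $\la a\ra_{\sigma'}$ is normal in $H$ because it is characteristic in $\la a\ra\trianglelefteq\GaL_1(p^f)$, and the intersection $\la a\ra_{\sigma'}\cap\la b\ra\la c\varphi^k\ra=1$ comes from (e), because $(c\varphi^k)^{f/k}\in\la b\ra$ forces any element of $\la b\ra\la c\varphi^k\ra\cap\la a\ra$ to lie in $\la b\ra$, and $\la b\ra\cap\la a\ra_{\sigma'}=1$ is automatic. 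Item (g) is then the direct count $|H|=|H\cap\la a\ra|\cdot(f/k)=((q-1)/\ell)(f/k)=(q-1)_{\sigma'}|b|(f/k)$ via (d). The main subtlety lies in the ordering of the arguments: (e) must be proved before the trivial-intersection step of (c), after which the remaining items reduce to routine arithmetic within $\la a\ra$.
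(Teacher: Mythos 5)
Your proposal is correct and follows essentially the same route as the paper's proof: the Hall decomposition of $\la a\ra$ for (a), (b), (d), the power formula $(c\varphi^k)^{f/k}=c^{(p^f-1)p^k/(p^k-1)}$ for (e) and (f), and normality of $\la a\ra_{\sigma'}$ together with a trivial-intersection argument for (c). The only cosmetic differences are that the paper obtains the trivial intersection in (c) from the containment $\la b\ra\la c\varphi^k\ra\leqslant\la a\ra_\sigma\la\varphi^k\ra$ (so it can prove (c) before (e), whereas you route it through (e)), and it counts $|H|$ in (g) by enumerating the $\la b\ra$-cosets of $\la b\ra\la c\varphi^k\ra$ instead of your direct index computation $|H|=|H\cap\la a\ra|\cdot(f/k)$; both variants are valid.
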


\begin{proof}
For convenience, write $q=p^f$. Since $H\cap\la a\ra=\la a^\ell\ra$, statement~\eqref{lem:Xia-3a} follows from $\sigma=\pi(\ell)$, and then statement~\eqref{lem:Xia-3b} follows from $b=a^{(q-1)_{\sigma'}\ell}$. As $c=a^j$ with $j$ divisible by $(q-1)_{\sigma'}$, we have $c\in\la a\ra_\sigma$ and
\[
H=\la a^\ell\ra\la a^j\varphi^k\ra=(H\cap\la a\ra)\la a^j\varphi^k\ra
=(\la a\ra_{\sigma'}\times\la b\ra)\la c\varphi^k\ra
\]
Since $\la b\ra\la c\varphi^k\ra\leqslant\la a\ra_\sigma\la\varphi^k\ra$ and $\la a\ra_{\sigma'}\cap(\la a\ra_\sigma\la\varphi^k\ra)=\la a\ra_{\sigma'}\cap\la a\ra_\sigma=1$, it follows that
\[
H=\la a\ra_{\sigma'}\la b\ra\la c\varphi^k\ra=\la a\ra_{\sigma'}{:}(\la b\ra\la c\varphi^k\ra),
\]
which completes the proof of statement~\eqref{lem:Xia-3c}. Moreover, statement~\eqref{lem:Xia-3d} follows from
\[
\ell=\frac{|a|}{|H\cap\la a\ra|}
=\frac{|a|}{|\la a\ra_{\sigma'}\times\la b\ra|}=\frac{|a|_\sigma}{|b|}.
\]

Write $t=p^k$. Then taking $i=f/k$ in Lemma~\ref{lem:Xia-5} gives
\begin{equation}\label{eqn:Xia-4}
(c\varphi^k)^{f/k}=c^\frac{(q-1)t}{t-1}.
\end{equation}
Thus $c^{(q-1)t/(t-1)}\in H\cap\la a\ra$, and so $c^{(q-1)/(t-1)}\in H\cap\la a\ra$ as $t=p^k$ is coprime to $|a|$. Since $c^{(q-1)/(t-1)}\in\la c\ra\leqslant\la a\ra_\sigma$ and $H\cap\la a\ra=\la a\ra_{\sigma'}\la b\ra$ with $\la b\ra\leqslant\la a\ra_\sigma$, it follows that $c^{(q-1)/(t-1)}\in\la b\ra$, as statement~\eqref{lem:Xia-3e} asserts.
Also, this yields that $(q-1)_{\sigma'}\ell$ divides $(q-1)j/(t-1)$, as $b=a^{(q-1)_{\sigma'}\ell}$ and $c=a^j$.
Hence $(t-1)\ell$ divides $(q-1)_\sigma j$, proving statement~\eqref{lem:Xia-3f}.

Finally,~\eqref{eqn:Xia-4}  combined with statement~\eqref{lem:Xia-3e} indicates that
\[
\la b\ra,\la b\ra(c\varphi^k),\ldots,\la b\ra(c\varphi^k)^{\frac{f}{k}-1}
\]
are the cosets of $\la b\ra$ in $\la b\ra\la c\varphi^k\ra$.
Hence $|\la b\ra\la c\varphi^k\ra|=|b|(f/k)$, and so
\[
|H|=|\la a\ra_{\sigma'}{:}(\la b\ra\la c\varphi^k\ra)|=(q-1)_{\sigma'}|b|\frac{f}{k}
=(q-1)_{\sigma'}|a^{(q-1)_{\sigma'}\ell}|\frac{f}{k}=\frac{q-1}{\ell}\cdot\frac{f}{k},
\]
as statement~\eqref{lem:Xia-3g} asserts.
\end{proof}

\begin{remark}
Definition~\ref{def} gives the explicit expression of the Foulser quadruple $(\sigma,b,c,k)$ in terms of a Foulser triple $(\ell,j,k)$. Conversely, $(\sigma,b,c,k)$ determines $(\ell,j,k)$, as $\ell=(p^f-1)_\sigma/|b|$ (see Lemma~\ref{lem:Xia-3}\,\eqref{lem:Xia-3d}) and $j$ is the unique integer in $\{1,\dots,p^f-1\}$ divisible by $(p^f-1)_{\sigma'}$ such that $c=a^j$.
\end{remark}

\begin{remark}
One may compare the triple $(\ell,j,k)$ of the so-called  ``standard parameters'' in~\cite{LLP2009} with our Foulser triple. Due to different parameter $j$, the divisibility condition
\[
(p^k-1)\ell\mid(p^f-1)j
\]
in~\cite[Lemma~4.4]{LLP2009} for the standard parameters is changed to $(p^k-1)\ell\mid(p^f-1)_\sigma j$ in Lemma~\ref{lem:Xia-3}\,\eqref{lem:Xia-3f} for the Foulser triple.
\end{remark}

\subsection{Classify transitive subgroups of $\GaL_1(p^f)$}\label{sec:Xia-4}
\ \vspace{1mm}

For usage in the next two lemmas, define a mapping $\psi\colon\GaL_1(p^f)=\la a\ra\la\varphi\ra\to\la a\ra$ by letting
\[
\psi(xy)=x
\]
for all $x\in\la a\ra$ and $y\in\la\varphi\ra$.

\begin{lemma}\label{lem:Xia-2}
A subgroup $H$ of $\GaL_1(p^f)$ is transitive on $\bbF_{p^f}^\times$ if and only if $\psi(H)=\la a\ra$.
\end{lemma}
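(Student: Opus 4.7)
The plan is to reduce the statement to a cosets-meet-subgroup criterion for transitivity.

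First, I would observe that $\la a\ra=\GL_1(p^f)$ acts regularly on $\bbF_{p^f}^\times$: the element $a$ is right multiplication by the generator $\omega$, so $1\cdot a^i=\omega^i$ runs over $\bbF_{p^f}^\times$ as $i$ does. In particular $\GaL_1(p^f)$ itself is transitive on $\bbF_{p^f}^\times$. A short computation identifies the stabilizer of $1$: an element $a^i\varphi^j\in\GaL_1(p^f)$ sends $1$ to $1\cdot a^i\varphi^j=(\omega^i)^{p^j}=\omega^{ip^j}$, which equals $1$ iff $ip^j\equiv 0\pmod{p^f-1}$, iff $i=0$ (since $p$ is coprime to $p^f-1$). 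Hence the stabilizer of $1$ in $\GaL_1(p^f)$ is exactly $\la\varphi\ra$.

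Next I would invoke the elementary principle that if a group $G$ acts transitively with point stabilizer $G_\alpha$, then a subgroup $H\leqslant G$ is transitive iff $G=HG_\alpha$. Applied to our setting, this gives
\[
H\text{ transitive on }\bbF_{p^f}^\times\ \Longleftrightarrow\ \GaL_1(p^f)=H\la\varphi\ra.
\]

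The last step is to translate the right-hand condition into $\psi(H)=\la a\ra$. Since $\GaL_1(p^f)=\la a\ra\la\varphi\ra$ and the cosets of $\la\varphi\ra$ are exactly $\{a^i\la\varphi\ra\}_{0\leqslant i\leqslant p^f-2}$, the equality $\GaL_1(p^f)=H\la\varphi\ra$ holds iff $H$ meets every coset $a^i\la\varphi\ra$, iff for every $i$ there exists $j$ with $a^i\varphi^j\in H$. By the definition of $\psi$, this last condition is precisely $\psi(H)=\la a\ra$, completing the equivalence.

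There is really no serious obstacle here; the only thing that deserves care is correctly identifying $\Stab_{\GaL_1(p^f)}(1)=\la\varphi\ra$ so that the transitivity-via-complementation criterion can be invoked cleanly. No structural analysis via Foulser triples is needed for this lemma; that machinery will enter only when we later want to read off transitivity from the parameters $(\ell,j,k)$.
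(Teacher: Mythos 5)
Your proof is correct and takes essentially the same route as the paper: both arguments rest on identifying $\la\varphi\ra$ as the stabilizer of the point $1$ and on the regularity of $\la a\ra$ on $\bbF_{p^f}^\times$, with your use of the criterion ``$H$ is transitive iff $\GaL_1(p^f)=H\la\varphi\ra$'' matching the paper's reasoning (its converse simply runs the same idea directly via $H=H^{-1}\subseteq\la\varphi\ra\psi(H)^{-1}$ and semiregularity of $\la a\ra$). Your coset-by-coset reading of the condition $\psi(H)=\la a\ra$ is also the correct way to handle the fact that $\psi$ is only a set map, not a homomorphism, so nothing is missing.
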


\begin{proof}
If $\psi(H)=\la a\ra$, then $\GaL_1(q)=\la a\ra\la\varphi\ra=H\la\varphi\ra$, and so $H$ is transitive as $\la\varphi\ra$ is the stabilizer in $\GaL_1(q)$ of the point $1\in\bbF_{p^f}^\times$.
Conversely, suppose that $H$ is transitive on $\bbF_{p^f}^\times$.
We deduce from $H\subseteq\psi(H)\la\varphi\ra$ that $H=H^{-1}\subseteq\la\varphi\ra\psi(H)^{-1}$. Then, since $H$ is transitive and the element $1\in\bbF_{p^f}^\times$ is fixed by $\varphi$, it follows that
\[
\bbF_{p^f}^\times=1^H\subseteq1^{\la\varphi\ra\psi(H)^{-1}}=1^{\psi(H)^{-1}}.
\]
Since $\psi(H)^{-1}\subseteq\la a\ra$ and $\la a\ra$ is semiregular, we infer that $\psi(H)^{-1}=\la a\ra$. Hence $\psi(H)=\la a\ra$.
\end{proof}

In the next lemma we derive some necessary conditions for a subgroup of $\GaL_1(p^f)$ to be transitive on $\bbF_{p^f}^\times$ in terms of its Foulser quadruple.

\begin{lemma}\label{lem:Xia-4}
Let $H$ be a subgroup of $\GaL_1(p^f)$ transitive on $\bbF_{p^f}^\times$, and let $(\sigma,b,c,k)$ be a Foulser quadruple of $H$. Then the following statements hold:
\begin{enumerate}[\rm(a)]
\item\label{lem:Xia-4a} $\la c\ra=\la a\ra_\sigma$;
\item\label{lem:Xia-4d} $\sigma\subseteq\pi(f)\cap\pi(p^k-1)$;
\item\label{lem:Xia-4e} if $2\in\sigma$ and $p^k\equiv3\pmod{4}$, then $|b|_2=|a|_2/2$.
\end{enumerate}
\end{lemma}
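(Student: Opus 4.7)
The plan is to apply Lemma~\ref{lem:Xia-2} to convert the transitivity of $H$ on $\bbF_{p^f}^\times$ into a single arithmetic criterion on the Foulser triple $(\ell,j,k)$ of $H$, and then derive each of~\eqref{lem:Xia-4a},~\eqref{lem:Xia-4d}, and~\eqref{lem:Xia-4e} from that criterion by short congruence arguments.

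First I would describe $\psi(H)$ explicitly. Since $H=\la a^\ell\ra\la c\varphi^k\ra$ with $\la a^\ell\ra$ normalised by $c\varphi^k$, every element of $H$ has the form $a^{\ell t}(c\varphi^k)^i$, and Lemma~\ref{lem:Xia-5} gives $(c\varphi^k)^i=c^{(p^{ki}-1)/(p^k-1)}\varphi^{ki}$. Setting $e_i:=j(p^{ki}-1)/(p^k-1)$, we therefore have
\[
\psi(H)=\bigl\{a^{\ell t+e_i}:t,i\in\mathbb{Z}\bigr\},
\]
so Lemma~\ref{lem:Xia-2} tells us that $H$ is transitive on $\bbF_{p^f}^\times$ if and only if $\{e_i\bmod\ell:i\in\mathbb{Z}\}=\mathbb{Z}/\ell\mathbb{Z}$. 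This is the single criterion driving all three conclusions.

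For~\eqref{lem:Xia-4d}, the containment $\sigma\subseteq\pi(f)$ follows from orbit--stabiliser combined with Lemma~\ref{lem:Xia-3}\,\eqref{lem:Xia-3g}, since $|H\cap\la\varphi\ra|=f/(\ell k)$ must divide $f$, forcing $\ell k\mid f$. The containment $\sigma\subseteq\pi(p^k-1)$ I would prove by contradiction: for $r\in\sigma$ with $r\nmid p^k-1$, set $d:=\mathrm{ord}_r(p^k)$, so that $d\mid r-1$ and hence $d<r$. A direct computation using $p^{kd}\equiv1\pmod r$ gives $e_{i+d}\equiv e_i\pmod r$, so the residues $\{e_i\bmod r\}$ form a set of size at most $d<r$, contradicting the criterion after projection $\mathbb{Z}/\ell\to\mathbb{Z}/r$. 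For~\eqref{lem:Xia-4a}, fixing $r\in\sigma$ and supposing $r\mid j$ would make every $e_i$ divisible by $r$, again contradicting the criterion; hence $\gcd(j,(p^f-1)_\sigma)=1$, and combining with the divisibility $(p^f-1)_{\sigma'}\mid j$ from the definition of a Foulser triple yields $|c|=(p^f-1)_\sigma$, which together with $c\in\la a\ra_\sigma$ (Lemma~\ref{lem:Xia-3}\,\eqref{lem:Xia-3c}) gives $\la c\ra=\la a\ra_\sigma$.

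For~\eqref{lem:Xia-4e}, assume $2\in\sigma$ and $p^k\equiv3\pmod 4$, so that $p^k\equiv-1\pmod 4$. A short computation on the partial sums $1+p^k+\cdots+p^{(i-1)k}\pmod 4$ shows they equal $0$ when $i$ is even and $1$ when $i$ is odd, whence $\{e_i\bmod 4\}\subseteq\{0,\,j\bmod 4\}$ has at most two elements. If $\ell_2\ge 4$, then the projection $\mathbb{Z}/\ell\to\mathbb{Z}/4$ contradicts the transitivity criterion; hence $\ell_2=2$, and $|b|_2=|a|_2/\ell_2=|a|_2/2$ by Lemma~\ref{lem:Xia-3}\,\eqref{lem:Xia-3d}. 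The main work is really in packaging Lemma~\ref{lem:Xia-5} into the clean arithmetic criterion on $\{e_i\bmod\ell\}$; once that is in place, each of~\eqref{lem:Xia-4a},~\eqref{lem:Xia-4d}, and~\eqref{lem:Xia-4e} reduces to a one-line observation about the affine recursion $e_{i+1}=j+p^ke_i$ modulo an appropriate divisor of $\ell$.
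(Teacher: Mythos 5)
Your overall route is genuinely different from the paper's: apart from part~(a), the paper does not pass through $\psi(H)=\la a\ra$ at all. It gets $\sigma\subseteq\pi(f)$ from the fact that $\la a\ra_{\pi'}$ (with $\pi=\pi(p^f-1)\cap\pi(f)$) is the unique Hall $\pi'$-subgroup of $\GaL_1(p^f)$ and must lie in $H$, and it proves $\sigma\subseteq\pi(p^k-1)$ and part~(c) by exhibiting explicit proper subsets of $\bbF_{p^f}^\times$, namely $\la\omega^r\ra\omega^i$ and $(\la\omega^4\ra\omega^2)\cup(\la\omega^4\ra\omega^j)$, that are stabilized by $H$, contradicting transitivity. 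Your reduction of everything to a single covering condition on exponents modulo $\ell$ is a legitimate alternative (an $H$-invariant union of cosets of $\la\omega^\ell\ra$ is exactly a failure of covering), and your orbit--stabiliser argument is sound in substance and even yields the stronger fact $\ell k\mid f$, which the paper's Hall argument does not give; the operative point there is that $|H\cap\la\varphi\ra|=|H|/(p^f-1)=f/(\ell k)$ is an integer, not that it ``divides $f$''.

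There is, however, a concrete error in the step you describe as the main packaging work: Lemma~\ref{lem:Xia-5} does not give $(c\varphi^k)^i=c^{(p^{ki}-1)/(p^k-1)}\varphi^{ki}$. The exponent it gives is $(1-p^{(f-k)i})/(1-p^{f-k})=1+p^{f-k}+\cdots+p^{(f-k)(i-1)}$, equivalently $(p^{ki}-1)p^{k+fi-ki}/(p^k-1)$; your version drops the unit factor $p^{k+fi-ki}$, i.e.\ the affine recursion has multiplier $p^{f-k}\equiv p^{-k}\pmod{p^f-1}$, not $p^k$. Since this unit varies with $i$, the set $\{e_i\bmod\ell\}$ you analyse is not the set of $\la a\ra$-parts of elements of $H$, so your computations for~(b) and~(c) are, as written, about the wrong residues. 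The damage is repairable without changing the structure: part~(a) uses only divisibility of the exponent by $r$, which the unit factor cannot affect; for~(b), any $r\in\sigma$ divides $p^f-1$, so $\mathrm{ord}_r(p)$ divides $f$ and hence $r\mid p^k-1$ if and only if $r\mid p^{f-k}-1$, and your periodicity argument runs verbatim with $d=\mathrm{ord}_r(p^{f-k})<r$; for~(c), under the contradiction hypothesis $4\mid\ell\mid p^f-1$ one has $p^f\equiv1\pmod 4$, so $p^k\equiv3\pmod 4$ forces $p^{f-k}\equiv3\pmod 4$ and the partial sums again lie in $\{0,1\}$ modulo $4$. With the formula corrected your proof is complete; as submitted, the key identity it rests on is false.
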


\begin{proof}
Write $q=p^f$ and $\pi=\pi(q-1)\cap\pi(f)$. Since $H$ is transitive on $\bbF_q^\times$, the order of $H$ is divisible by $(q-1)_{\pi'}$. Then, as $\la a\ra_{\pi'}$ is the unique Hall $\pi'$-subgroup of $\GaL_1(q)$, this implies that $\la a\ra_{\pi'}\leqslant H$. Note from Lemma~\ref{lem:Xia-3}\,\eqref{lem:Xia-3a} that $\la a\ra_{\sigma'}$ is the largest Hall subgroup of $\la a\ra$ contained in $H$. We then deduce $\pi(q-1)\setminus\pi\subseteq\pi(q-1)\setminus\sigma$, that is, $\sigma\subseteq\pi$. In particular, $\sigma\subseteq\pi(f)$.

Since $H=\la a\ra_{\sigma'}\la b\ra\la c\varphi^k\ra\leqslant(\la a\ra_{\sigma'}\la b\ra\la c\ra){:}\la\varphi^k\ra$, we have $\psi(H)\subseteq\la a\ra_{\sigma'}\la b\ra\la c\ra$. Moreover, Lemma~\ref{lem:Xia-2} asserts that $\psi(H)=\la a\ra$. Therefore,
\begin{equation}\label{eqn:Xia-1}
\la a\ra_{\sigma'}\la b\ra\la c\ra=\la a\ra.
\end{equation}
If $|a|_\sigma/|c|$ is divisible by some $r\in\sigma$, then each of $\la a\ra_{\sigma'}$, $\la b\ra$ and $\la c\ra$ has index in $\la a\ra$ divisible by $r$ and hence is contained in $\la a^r\ra$, contradicting~\eqref{eqn:Xia-1}. Thus $\la c\ra=\la a\ra_\sigma$, proving statement~\eqref{lem:Xia-4a}.

To prove $\sigma\subseteq\pi(p^k-1)$, suppose for a contradiction that $p^k-1$ is not divisible by some $r\in\sigma$. Then there exists $i\in\{0,1,\dots,r-1\}$ such that $(1-p^k)i\equiv jp^k\pmod{r}$. Now consider the proper subset $\la\omega^r\ra\omega^i$ of $\bbF_q^\times$. It is stabilized by $c\varphi^k$ because
\[
(\la\omega^r\ra\omega^i)^{c\varphi^k}=(\la\omega^r\ra\omega^{i+j})^{\varphi^k}=\la\omega^{rp^k}\ra\omega^{(i+j)p^k}
=\la\omega^r\ra\omega^{(i+j)p^k}=\la\omega^r\ra\omega^i.
\]
Moreover, since $\la a\ra_{\sigma'}$ and $\la b\ra$ both have index in $\la a\ra$ divisible by $r$, they are contained in $\la a^r\ra$ and hence stabilize $\la\omega^r\ra\omega^i$. As a consequence, $\la\omega^r\ra\omega^i$ is stabilized by $\la a\ra_{\sigma'}\la b\ra\la c\varphi^k\ra=H$, contradicting the transitivity of $H$ on $\bbF_q^\times$.

Thus we conclude that $\sigma\subseteq\pi(p^k-1)$, completing the verification of statement~\eqref{lem:Xia-4d}. Now let $2\in\sigma$ and $p^k\equiv3\pmod{4}$, as in the assumption of statement~\eqref{lem:Xia-4e}. Then $(|a|/|b|)_2\geqslant2$ as $\pi(|a|_\sigma/|b|)=\sigma$ (see Lemma~\ref{lem:Xia-3}\,\eqref{lem:Xia-3d}). Suppose for a contradiction that $(|a|/|b|)_2\geqslant4$. Since $c=a^j$ is a generator of $\la a\ra_\sigma$, the integer $j$ is odd, which indicates that $2j\equiv2\pmod{4}$. Hence
\begin{align*}
(\la\omega^4\ra\omega^2)^{c\varphi^k}&=(\la\omega^4\ra\omega^{2+j})^{\varphi^k}
=\la\omega^{4p^k}\ra\omega^{(2+j)p^k}=\la\omega^4\ra\omega^{3(2+j)}=\la\omega^4\ra\omega^j,\\
(\la\omega^4\ra\omega^j)^{c\varphi^k}&=(\la\omega^4\ra\omega^{2j})^{\varphi^k}
=\la\omega^{4p^k}\ra\omega^{2jp^k}=\la\omega^4\ra\omega^{6j}=\la\omega^4\ra\omega^2,
\end{align*}
and so $(\la\omega^4\ra\omega^2)\cup(\la\omega^4\ra\omega^j)$ is stabilized by $c\varphi^k$.
Moreover, since both $\la a\ra_{\sigma'}$ and $\la b\ra$ have index in $\la a\ra$ divisible by $4$, they both stabilize $(\la\omega^4\ra\omega^2)\cup(\la\omega^4\ra\omega^j)$. Thus $H=\la a\ra_{\sigma'}\la b\ra\la c\varphi^k\ra$ stabilizes $(\la\omega^4\ra\omega^2)\cup(\la\omega^4\ra\omega^j)$, contradicting the transitivity of $H$ on $\bbF_q^\times$. Therefore, $(|a|/|b|)_2=2$, which proves statement~\eqref{lem:Xia-4e}.
\end{proof}

The following is~\cite[Lemma~8.16\,(b)]{LX2022}.

\begin{lemma}\label{lem:Xia-1}
Let $n\geqslant2$ and $\ell\geqslant1$ be integers. Then the following statements hold:
\begin{enumerate}[\rm(a)]
\item if $r$ is an odd prime dividing $n-1$, then $(n^\ell-1)_r=\ell_r(n-1)_r$;
\item if $4$ divides $n-1$, then $(n^\ell-1)_2=\ell_2(n-1)_2$.
\end{enumerate}
\end{lemma}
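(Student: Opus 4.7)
The plan is to prove both parts simultaneously via the standard \emph{lifting-the-exponent} argument, splitting the exponent as $\ell = \ell_r \cdot \ell_{r'}$ and handling the two pieces separately, with induction on the $r$-part.

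First I would handle the $r'$-part. Writing
\[
n^\ell - 1 = (n-1)(1 + n + n^2 + \cdots + n^{\ell - 1})
\]
and using $n \equiv 1 \pmod{r}$, the second factor is congruent to $\ell$ modulo $r$. Hence if $\gcd(\ell, r) = 1$ the second factor is coprime to $r$, and $(n^\ell - 1)_r = (n-1)_r$. This reduces both statements to establishing the single multiplication-by-$r$ step $(n^r - 1)_r = r \cdot (n-1)_r$ under the respective hypotheses.

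The key step is proving that single-step formula. Writing $n = 1 + u$ with $v_r(u) = a$ (where $a \geq 1$ in case (a), $a \geq 2$ in case (b)) and expanding each $n^i = (1 + u)^i$ binomially, the sum $\sum_{i=0}^{r-1} n^i$ has constant term $r$, linear term $u \binom{r}{2}$, and higher-order terms divisible by $u^2$. For $r$ odd, $(r-1)/2$ is an integer, so the linear term has $r$-adic valuation $a + 1$ while the higher-order terms have valuation at least $2a$; the sum therefore has valuation exactly $1$, yielding $(n^r-1)_r = r (n-1)_r$. For $r = 2$, I would use instead the direct factorization $n^2 - 1 = (n-1)(n+1)$: the hypothesis $4 \mid n-1$ forces $n + 1 \equiv 2 \pmod{4}$, so $v_2(n+1) = 1$, giving the doubling formula.

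Finally, to handle a general $r$-power exponent, I would iterate the single-step formula: replacing $n$ by $n^r$ preserves the odd-case hypothesis $r \mid n^r - 1$ and (crucially for (b)) the divisibility $4 \mid n^2 - 1$, since $v_2(n^2-1) = v_2(n-1)+1 \geq 3$ once $v_2(n-1) \geq 2$. Induction on $v_r(\ell)$ then completes the argument. The main subtlety I expect is precisely the $r = 2$ case: the hypothesis $4 \mid n - 1$ is exactly what makes $v_2(n+1) = 1$ at the doubling step, and without it (e.g.\ $n \equiv 3 \pmod{4}$) the formula fails immediately, so one must be careful to verify that this condition propagates under $n \mapsto n^2$.
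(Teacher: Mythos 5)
Your proof is correct. Note, however, that the paper does not prove this lemma at all: it is quoted verbatim from \cite[Lemma~8.16(b)]{LX2022}, so there is no internal argument to compare against. Your self-contained lifting-the-exponent proof supplies exactly what is being cited. The two key points are handled properly: for odd $r$, in $\sum_{i=0}^{r-1}(1+u)^i$ the constant term $r$ has valuation $1$, the linear term $u\binom{r}{2}$ has valuation $v_r(u)+1\geqslant 2$, and the terms in $u^j$ with $j\geqslant 2$ have valuation at least $2v_r(u)\geqslant 2$, so the cofactor has valuation exactly $1$; and for $r=2$ the hypothesis $4\mid n-1$ both forces $v_2(n+1)=1$ at the doubling step and propagates under $n\mapsto n^2$ (indeed $v_2(n^2-1)=v_2(n-1)+1\geqslant 3$), so the induction on $v_2(\ell)$ goes through. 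Combined with the coprime-exponent reduction via $1+n+\cdots+n^{\ell-1}\equiv\ell\pmod r$, this gives a complete and standard proof of both parts.
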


We are now in a position to prove Theorem~\ref{thm:trans} and Corollary~\ref{thm:min}.

\begin{proof}[Proof of Theorem~$\ref{thm:trans}$ and Corollary~$\ref{thm:min}$]
The ``if'' part of Theorem~\ref{thm:trans} follows from Lemma~\ref{lem:Xia-4} by considering any Foulser quadruple of $H$. Conversely, let
\[
H=\la a\ra_{\sigma'}{:}(\la b\ra\la c\varphi^k\ra)
\]
with some divisor $k$ of $f$, subset $\sigma$ of $\pi(p^k-1)\cap\pi(f)$, element $b$ in $\la a\ra_\sigma$ and generator $c$ of $\la a\ra_\sigma$ satisfying the condition of Theorem~\ref{thm:trans}. (We do not assume that $(\sigma,b,c,k)$ is a Foulser quadruple.)

Write $q=p^f$, $e=f/k$ and $t=p^k$. Taking $i=e$ in Lemma~\ref{lem:Xia-5} gives
\begin{equation}\label{eqn:Xia-9}
(c\varphi^k)^e=c^\frac{(q-1)t}{t-1}.
\end{equation}
As $\la c\ra=\la a\ra_\sigma$, this implies that $\la c\varphi^k\ra$ is semiregular on $\bbF_q^\times$ and that
\begin{equation}\label{eqn:Xia-8}
|c\varphi^k|=e(t-1)_\sigma.
\end{equation}
Clearly, $t-1$ is divisible by every $r\in\sigma$.
If either $r=2\in\sigma$ and $4$ divides $t-1$ or $r\in\sigma\setminus\{2\}$, then by Lemma~\ref{lem:Xia-1} we have
\begin{equation}\label{eqn:Xia-15}
|c\varphi^k|_r=e_r(t-1)_r=(t^e-1)_r=(q-1)_r.
\end{equation}

\textsf{Case~1}: $2\notin\sigma$ or $4$ divides $t-1$. In this case, $|c\varphi^k|_r=(q-1)_r$ for each $r\in\sigma$, and so $|c\varphi^k|_\sigma=(q-1)_\sigma$.
Since $\la a\ra_{\sigma'}$ and $\la c\varphi^k\ra_\sigma$ are both semiregular on $\bbF_q^\times$ and have coprime order, the group $\la a\ra_{\sigma'}{:}\la c\varphi^k\ra_\sigma$ is semiregular on $\bbF_q^\times$ and has order
\[
|a|_{\sigma'}|c\varphi^k|_\sigma=(q-1)_{\sigma'}(q-1)_\sigma=q-1.
\]
Therefore, $\la a\ra_{\sigma'}{:}\la c\varphi^k\ra_\sigma$ is regular on $\bbF_q^\times$. As a consequence, $H$ is transitive on $\bbF_q^\times$, as the ``only if'' part of Theorem~\ref{thm:trans} requires. It also follows that $H$ is minimally transitive on $\bbF_q^\times$ if and only if $b=1$ and $|c\varphi^k|_{\sigma'}=1$, and the same is true for $H$ to be regular. In view of~\eqref{eqn:Xia-8} we see that $|c\varphi^k|_{\sigma'}=1$ if and only if $e_{\sigma'}=1$, that is, $k$ is divisible by $f_{\sigma'}$.
Thus Corollary~\ref{thm:min} holds in this case.

\textsf{Case~2}: $2\in\sigma$ and $4$ does not divide $t-1$. In this case, recall from the condition of Theorem~\ref{thm:trans} that $|b|_2=|a|_2/2$. Thus $\la a\ra_{\sigma'}\la b\ra_2$ has order $(q-1)_{\sigma'}(q-1)_2/2$. Since $|c\varphi^k|_r=(q-1)_r$ for each $r\in\sigma\setminus\{2\}$, which implies $|c\varphi^k|_{\sigma\setminus\{2\}}=(q-1)_{\sigma\setminus\{2\}}$, it follows that
\[
K:=\la a\ra_{\sigma'}{:}(\la b\ra_2\la c\varphi^k\ra_{\sigma\setminus\{2\}})
\]
is semiregular on $\bbF_q^\times$ and has order
\[
|\la a\ra_{\sigma'}\la b\ra_2|\cdot|c\varphi^k|_{\sigma\setminus\{2\}}
=\frac{(q-1)_{\sigma'}(q-1)_2}{2}\cdot(q-1)_{\sigma\setminus\{2\}}=\frac{q-1}{2}.
\]
Since $|c\varphi^k|=e(t-1)_\sigma$ is even, we have $\la c\varphi^k\ra_{\sigma\setminus\{2\}}\leqslant\la(c\varphi^k)^2\ra$.
Write $c=a^j$ with integer $j$. Then $j$ is odd as $\la c\ra=\la a\ra_\sigma$. Therefore, for $i\in\{0,1\}$,
\begin{equation}\label{eqn:Xia-2}
(\la\omega^2\ra\omega^i)^{c\varphi^k}=(\la\omega^2\ra\omega^{i+j})^{\varphi^k}=\la\omega^{2p^k}\ra\omega^{(i+j)p^k}
=\la\omega^2\ra\omega^{i+j}=\la\omega^2\ra\omega^{1-i}.
\end{equation}
As a consequence, $\la\omega^2\ra$ is stabilized by $(c\varphi^k)^2$ and hence by $\la c\varphi^k\ra_{\sigma\setminus\{2\}}$.
This together with $\la a\ra_{\sigma'}\la b\ra_2\leqslant\la a^2\ra$ yields that $K$ stabilizes $\la\omega^2\ra$.
Since $K$ is semiregular on $\bbF_q^\times$ and has order $(q-1)/2$, it follows that the orbits of $K$ on $\bbF_q^\times$ are $\la\omega^2\ra$ and $\la\omega^2\ra\omega$. Moreover,~\eqref{eqn:Xia-2} shows that $c\varphi^k$ does not stabilize $\la\omega^2\ra$. Hence $\la a\ra_{\sigma'}{:}(\la b\ra_2\la c\varphi^k\ra_{\sigma\setminus\{2\}})$ is transitive on $\bbF_q^\times$, and so is $H$.
This completes the proof of Theorem~\ref{thm:trans}.

Since $|\la a\ra|/|\la b\ra|_2=2$, it also follows that $H$ is minimally transitive on $\bbF_q^\times$ if and only if $\la b\ra=\la b\ra_2=\la a^2\ra_2$ and $|c\varphi^k|_{\sigma'}=1$. Note from~\eqref{eqn:Xia-8} that $|c\varphi^k|_{\sigma'}=1$ if and only if $k$ is divisible by $f_{\sigma'}$, and note from $2\in\sigma\subseteq\pi(t-1)$ that $4$ does not divide $t-1$ if and only if $t\equiv3\pmod{4}$. Thus $H$ is minimally transitive on $\bbF_q^\times$ if and only if it satisfies Corollary~\ref{thm:min}\,\eqref{thm:minb} with $k$ divisible by $f_{\sigma'}$. From now on, let
\begin{equation}\label{eqn:Xia-16}
H=\la a\ra_{\sigma'}{:}(\la a^2\ra_2\la c\varphi^k\ra)
\end{equation}
be such a minimally transitive subgroup. Since $|c\varphi^k|_{\sigma'}=1$ and $|c\varphi^k|_r=(q-1)_r$ for each $r\in\sigma\setminus\{2\}$, we derive that $|H|=(q-1)_{2'}|\la a^2\ra_2\la c\varphi^k\ra|_2$, which together with~\eqref{eqn:Xia-9} implies that
\[
|H|=(q-1)_{2'}\cdot\frac{(q-1)_2}{2}\cdot e_2.
\]
Hence $H$ is regular on $\bbF_q^\times$ if and only if $e_2=2$. Note that $p^k=t\equiv3\pmod{4}$ implies $k_2=1$ and hence $f_2=e_2$.
We conclude that $H$ is regular in this case if and only if $f_2=2$, or equivalently, $f\equiv2\pmod{4}$.
Finally, assume the condition $f_2=2$, so that $H$ is regular on $\bbF_q^\times$. It follows from~\eqref{eqn:Xia-8} that $|c\varphi^k|_2=e_2(t-1)_2=f_2(t-1)_2=2\cdot2=4$, which implies
\[
|c\varphi^k|=|c\varphi^k|_{\sigma'}|c\varphi^k|_\sigma=|c\varphi^k|_\sigma=4|c\varphi^k|_{\sigma\setminus\{2\}}=4\cdot(q-1)_{\sigma\setminus\{2\}}
\]
by~\eqref{eqn:Xia-15}. Let $x=a^{2(q-1)_{2'}}$, $y=(c\varphi^k)^{(q-1)_{\sigma\setminus\{2\}}}$ and $z=(c\varphi^k)^4$.
Then $\la a^2\ra_2=\la x\ra=\C_{(q-1)_2/2}$, and
\begin{equation}\label{eqn:Xia-17}
\la c\varphi^k\ra=\la y\ra\times\la z\ra
\end{equation}
with $\la y\ra=\C_4$ and $\la z\ra=\C_{(q-1)_{\sigma\setminus\{2\}}}$. By Lemma~\ref{lem:Xia-1},
\[
\big(t^{2(q-1)_{\sigma\setminus\{2\}}}-1\big)_2=(t^2-1)_2=\big((t^2)^{e/2}-1\big)_2=(q-1)_2.
\]
Since $(t^{(q-1)_{\sigma\setminus\{2\}}}-1)_2=2$ as $t\equiv3\pmod{4}$, it follows that $(t^{(q-1)_{\sigma\setminus\{2\}}}+1)_2=(q-1)_2/2$. Hence
\[
x^yx=\big(a^{2(q-1)_{2'}}\big)^{(c\varphi^k)^{(q-1)_{\sigma\setminus\{2\}}}}a^{2(q-1)_{2'}}
=a^{2(q-1)_{2'}(t^{(q-1)_{\sigma\setminus\{2\}}}+1)}=a^{(q-1)_2(q-1)_{2'}}=1.
\]
In view of~\eqref{eqn:Xia-16} and~\eqref{eqn:Xia-17}, we have $|\la x\ra\la y\ra|=|H|_2=(q-1)_2$. Therefore, the Sylow $2$-subgroup $\la x,y\ra=\la x\ra\la y\ra$ of $H$ is a generalized quaternion group of order $(q-1)_2$. This together with~\eqref{eqn:Xia-16} and~\eqref{eqn:Xia-17} completes the proof of Corollary~\ref{thm:min}.
\end{proof}

\subsection{Apply to solvable factors}\label{sec:Xia-3}
\ \vspace{1mm}

Let $H$ be a subgroup of $\GaL_1(p^f)=\la a\ra{:}\la\varphi\ra$ with a Foulser triple $(\ell,j,k)$ and a corresponding Foulser quadruple $(\sigma,b,c,k)$ as in Definition~\ref{def}. Then $H=\la a^\ell\ra\la a^j\varphi^k\ra$, $H\cap\la a\ra=\la a^\ell\ra$, $b=a^{(p^f-1)_{\sigma'}\ell}$, and $c=a^j$. Therefore, $\la c\ra=\la a\ra_\sigma$ if and only if $\sigma\cap\pi(j)=\varnothing$, and in the case $2\in\sigma$, we have $|b|_2=|a|_2/2$ if and only if $\ell\equiv2\pmod{4}$.
It then follows from Theorem~\ref{thm:trans} and Lemma~\ref{lem:Xia-4} that $H$ is transitive on $\bbF_{p^f}^\times$ if and only if the following conditions hold:
\begin{itemize}
\item $\pi(\ell)\subseteq\pi(f)\cap\pi(p^k-1)\setminus\pi(j)$;
\item if $\ell$ is even and $p^k\equiv3\pmod{4}$, then $\ell\equiv2\pmod{4}$.
\end{itemize}
For a divisor $i$ of $p^f-1$, the subgroup $\la a^i\ra H=\la a^{\gcd(\ell,i)}\ra\la a^j\varphi^k\ra$ of $\GaL_1(p^f)$ has a Foulser triple $(\gcd(\ell,i),j',k)$ such that $j'-j$ is divisible by $\gcd(\ell,j)$. Note that $\pi(\gcd(\ell,i))=\pi(\ell)\cap\pi(i)$ and that $\pi(\gcd(\ell,i))\cap\pi(j')=\varnothing$ if and only if $\pi(\gcd(\ell,i))\cap\pi(j)=\varnothing$. Then the argument of this paragraph leads to the following theorem.

\begin{theorem}\label{thm:Xia-1}
Let $H\leqslant\GaL_1(p^f)=\la a\ra{:}\la\varphi\ra$, let $(\ell,j,k)$ be a Foulser triple of $H$, and let $i$ be a divisor of $p^f-1$. Then $H$ is transitive on the set of orbits of $\la a^i\ra$ on $\bbF_{p^f}^\times$ if and only if the following conditions hold:
\begin{enumerate}[\rm(a)]
\item\label{thm:Xia-1a} $\pi(\ell)\cap\pi(i)\subseteq\pi(f)\cap\pi(p^k-1)\setminus\pi(j)$;
\item\label{thm:Xia-1b} if $\gcd(\ell,i)$ is even and $p^k\equiv3\pmod{4}$, then $\gcd(\ell,i)\equiv2\pmod{4}$.
\end{enumerate}
\end{theorem}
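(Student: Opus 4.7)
The plan is to reduce Theorem~\ref{thm:Xia-1} to the transitivity criterion for $\GaL_1(p^f)$ on $\bbF_{p^f}^\times$ derived, in terms of a Foulser triple, in the first paragraph of this subsection (itself a consequence of Theorem~\ref{thm:trans} together with Lemma~\ref{lem:Xia-4}). The starting observation is that $\la a^i\ra$ is characteristic in $\la a\ra$ and hence normal in $\GaL_1(p^f)$, so the set of $\la a^i\ra$-orbits on $\bbF_{p^f}^\times$ carries a $\GaL_1(p^f)$-action, and $H$ is transitive on it if and only if the overgroup $H':=\la a^i\ra H$ is transitive on $\bbF_{p^f}^\times$.

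Next I would exhibit a Foulser triple of $H'$. Since $\la a\ra$ is normal, we have $H'\cap\la a\ra=\la a^\ell,a^i\ra=\la a^{\gcd(\ell,i)}\ra$, while the image of $H'$ modulo $\la a\ra$ coincides with that of $H$, namely $\la\overline{\varphi^k}\ra$. Because $a^j\varphi^k\in H\leqslant H'$, Lemma~\ref{lem:Xia-6} applied to $H'$ with the element $a^j\varphi^k$ supplies an integer $j'\in\{1,\dots,p^f-1\}$ divisible by $(p^f-1)_{\pi(\gcd(\ell,i))'}$ such that $H'=\la a^{\gcd(\ell,i)}\ra\la a^{j'}\varphi^k\ra$; the construction in the proof of that lemma moreover forces $a^{j'}\in\la a\ra_{\pi(\gcd(\ell,i))'}a^j\subseteq\la a^{\gcd(\ell,i)}\ra a^j$, so $j'\equiv j\pmod{\gcd(\ell,i)}$. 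Thus $(\gcd(\ell,i),j',k)$ is a Foulser triple of $H'$.

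Applying the transitivity criterion recalled above to $H'$ with this triple, $H'$ is transitive on $\bbF_{p^f}^\times$ precisely when $\pi(\gcd(\ell,i))\subseteq\pi(f)\cap\pi(p^k-1)\setminus\pi(j')$ and, whenever $\gcd(\ell,i)$ is even and $p^k\equiv3\pmod{4}$, also $\gcd(\ell,i)\equiv2\pmod{4}$. Now $\pi(\gcd(\ell,i))=\pi(\ell)\cap\pi(i)$, and the congruence $j'\equiv j\pmod{\gcd(\ell,i)}$ guarantees that any prime $r\in\pi(\gcd(\ell,i))$ divides $j$ if and only if it divides $j'$; hence $\pi(\gcd(\ell,i))\setminus\pi(j')=\pi(\gcd(\ell,i))\setminus\pi(j)$, and the two displayed conditions translate verbatim into conditions~\eqref{thm:Xia-1a} and~\eqref{thm:Xia-1b} of the theorem.

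The only non-routine step is the bookkeeping for the Foulser triple of $H'$, i.e.\ verifying that the new $\ell$ is $\gcd(\ell,i)$ and that the replacement $j\leadsto j'$ produced by Lemma~\ref{lem:Xia-6} preserves the subset $\pi(\gcd(\ell,i))\cap\pi(j)$; once this is in place, the theorem follows immediately from the transitivity characterization already established, so I do not anticipate any serious obstacle.
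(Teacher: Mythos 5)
Your proposal is correct and follows essentially the same route as the paper: pass to $\la a^i\ra H$ (using normality of $\la a^i\ra$), compute its Foulser triple $(\gcd(\ell,i),j',k)$ with $j'\equiv j\pmod{\gcd(\ell,i)}$, and apply the Foulser-triple transitivity criterion coming from Theorem~\ref{thm:trans} and Lemma~\ref{lem:Xia-4}, noting $\pi(\gcd(\ell,i))=\pi(\ell)\cap\pi(i)$ and that primes of $\gcd(\ell,i)$ divide $j'$ iff they divide $j$. Your bookkeeping for the new triple is in fact slightly more careful than the paper's own wording of that step.
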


The two remarks below show how Theorem~\ref{thm:Xia-1} is applied to the characterization of solvable factors $H$ in Hypothesis~\ref{hypo-1}.

\begin{remark}\label{rem:Xia-1}
Under Hypothesis~\ref{hypo-1}\,\eqref{hypo-1i}, let $(\ell,j,k)$ be a Foulser triple of $H$ as a subgroup of $\GaL_1(p^f)$ with $p^f=q^m$. Since $G=HB$ if and only if $H$ is transitive on $(\bbF_q^m)_{q-1}$, we derive from Theorem~\ref{thm:Xia-1} that $G=HB$ is equivalent to~\eqref{thm:Xia-1a} and~\eqref{thm:Xia-1b} in Theorem~\ref{thm:Xia-1} with $i=(q^m-1)/(q-1)$.
\end{remark}

\begin{remark}\label{rem:Xia-2}
Let $(\ell,j,k)$ be a Foulser triple of the group $S$ in Theorems~\ref{thm:Xia-2}--\ref{thm:Xia-4}, where $S\leqslant\GaL_1(p^f)$ such that $p^f=q^{2m}$ for Theorem~\ref{thm:Xia-2} and $p^f=q^m$ for Theorems~\ref{thm:Xia-3} and~\ref{thm:Xia-4}.
Then the condition on $S$ in Theorem~\ref{thm:Xia-2} is equivalent to~\eqref{thm:Xia-1a} and~\eqref{thm:Xia-1b} in Theorem~\ref{thm:Xia-1} with $i=(q^{2m}-1)/(q+1)$. Similarly, the condition on $S$ in Theorem~\ref{thm:Xia-3}\,\eqref{thm:Xia-3a} is equivalent to~\eqref{thm:Xia-1a} and~\eqref{thm:Xia-1b} in Theorem~\ref{thm:Xia-1} with $i=(q^m-1)/\gcd(2,q-1)$.
For the rest of Theorems~\ref{thm:Xia-3} and~\ref{thm:Xia-4}, the transitivity of $S$ on $\bbF_q^m\setminus\{0\}$ is equivalent to $\pi(\ell)\subseteq\pi(f)\cap\pi(p^k-1)\setminus\pi(j)$, and the condition that $|T|/|SS_0|$ is odd turns out to be that $k$ is odd.
\end{remark}

\section{Reduction}\label{sec:Xia-1}

As mentioned in the Introduction, the solvable factor $H$ in~\eqref{hypo-1ii}--\eqref{hypo-1iv} of Hypothesis~\ref{hypo-1} can be written as $H=P.S$ with $P=H\cap R\leqslant R$ and $S\leqslant T$, where $R$ is the unipotent radical of $\Pa_m[G]$ and
$T\cap G$ is the Levi subgroup of $\Pa_m[G]$. We first prove that $H$ is necessarily the split extension $P{:}S$.

\begin{lemma}\label{lem:Xia-7}
Let $G=HB$ with $(G,H,B)$ in Hypothesis~$\ref{hypo-1}$\,\eqref{hypo-1ii}--\eqref{hypo-1iv} such that $H=P.S$ with $P=H\cap R$ and $S\leqslant T$. Then $H=P{:}(H\cap T)$.
\end{lemma}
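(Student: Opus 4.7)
The plan is to verify that $H\cap T$ is a complement to $P$ in $H$. Trivial intersection $(H\cap T)\cap P=1$ is immediate from $P\leqslant R$, $H\cap T\leqslant T$, and $R\cap T=1$ in the semidirect product $\widehat{H}=R\rtimes T$. It remains to prove $|H\cap T|=|H|/|P|=|S|$. Consider the projection $\pi\colon\widehat{H}\to T$ with kernel $R$; it restricts to a homomorphism $\pi|_H\colon H\to T$ with kernel $P$ and image $S$, while $\pi|_{H\cap T}$ is just the inclusion. So $H\cap T$ injects into $S$ via $\pi$, giving $|H\cap T|\leqslant|S|$, with equality iff $S\leqslant H$ as a subgroup of $\widehat{H}$. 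The task therefore reduces to showing $S\leqslant H$.

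The first step I would take is to use the factorization $G=HB$ to pin down the $p$-part of $|H|$. From $|H|/|H\cap B|=|G|/|B|$ one obtains $|H|_p=|G|_p\cdot|H\cap B|_p/|B|_p$. In each of Hypothesis~\ref{hypo-1}\,\eqref{hypo-1ii}--\eqref{hypo-1iv}, the $p$-part $|G{:}B|_p$ is a specific power of $q$ (governed by $B=\N_1[G]$ being a nondegenerate $1$-space stabilizer, or $B\cap L=\mathrm{O}_{2m}^-(q)$ in the symplectic case), and $|H\cap B|_p$ is controlled by the $p$-part of $\widehat{H}\cap\widehat{K}$ recorded in the table before Hypothesis~\ref{hypo-1}. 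A case-by-case verification should yield $|H|_p=|P|$, so that $|S|=|H|/|P|$ is coprime to $p$ and $P$ is the normal Sylow $p$-subgroup of $H$.

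Given coprimality of $|P|$ and $|S|$, Schur--Zassenhaus delivers a Hall $p'$-subgroup $Q\leqslant H$ with $H=P\rtimes Q$ and $|Q|=|S|$. Since $Q$ is a $p'$-subgroup of $\widehat{H}=R\rtimes T$ with $R$ a $p$-group, by Hall's theorem $Q$ lies in an $\widehat{H}$-conjugate of $T$; since we classify factorizations up to $G$-conjugacy, I may replace $H$ by a suitable $\widehat{H}$-conjugate to arrange $Q\leqslant T$, so that $Q\leqslant H\cap T$. Combined with $|Q|=|S|\geqslant|H\cap T|$, this forces $Q=H\cap T$ and hence $H=P{:}(H\cap T)$.

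The hard part I anticipate is the first step: the case-by-case certification of $|H|_p=|P|$ across Hypothesis~\ref{hypo-1}\,\eqref{hypo-1ii}--\eqref{hypo-1iv}, with careful tracking of $|H\cap B|_p$ via the structural data in the table. An alternative and possibly cleaner route is to note that the embedding of $H$ in $R\rtimes T$ with $H\cap R=P$ and $HR/R=S$ is described, after quotienting by $P$, by a $1$-cocycle $\gamma\colon S\to R/P$ with $H\cap T=\{s\in S\colon\gamma(s)=0\}$; then showing $\gamma$ is a coboundary (which follows by Schur--Zassenhaus/Maschke once $|S|$ is coprime to $p$) allows an $R$-conjugation of $H$ making $\gamma\equiv 0$, yielding $S\leqslant H$ directly.
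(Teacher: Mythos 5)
Your reduction of the lemma to the statement $S\leqslant H$ is correct, but the pivotal step of your plan is false: $|S|$ need not be coprime to $p$. The group $T=\GaL_1(q^{sm})=\C_{q^{sm}-1}{:}\C_{smf}$ contains the Frobenius part $\C_{smf}$, and $p$ frequently divides $smf$. For instance, under Hypothesis~\ref{hypo-1}\,\eqref{hypo-1iv} with $q=2$, $m=4$, the subgroup $S=T=\C_{15}{:}\C_4$ is transitive on $\bbF_2^4\setminus\{0\}$ and $H=U(0){:}S$ is a genuine solvable factor covered by Theorem~\ref{thm:Xia-4}, yet $|S|_2=4$. So the hoped-for identity $|H|_p=|P|$ fails, $P$ is not a Sylow $p$-subgroup of $H$ in general, and Schur--Zassenhaus produces no complement; likewise $T$ is not a Hall $p'$-subgroup of $\widehat{H}=R{:}T$ when $p\mid smf$, so Hall's theorem does not place your $Q$ inside a conjugate of $T$; and your alternative cocycle route breaks for the same reason, since killing the cocycle $\gamma\colon S\to R/P$ by Maschke/averaging again needs $p\nmid|S|$. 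The proposal therefore collapses at its first step, not merely in the anticipated ``hard'' bookkeeping.

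A secondary point: even in the cases where coprimality happens to hold, your argument yields the conclusion only after replacing $H$ by a conjugate (and an $\widehat{H}$-conjugation is not justified by ``classification up to conjugacy'' when the conjugating element lies outside $G$), whereas the lemma asserts $H=P{:}(H\cap T)$, equivalently $S\leqslant H$, for the given $H$, and this is how it is used later. The paper's proof avoids coprimality altogether: from $G=HB$ it extracts a primitive prime divisor $r$ of $q^{sm/\gcd(2,m+s-1)}-1$ dividing $|H|$, so that $H$ contains the unique subgroup $\la h\ra$ of order $r$ in $T$; using that $T$ normalizes $P$ (the $S$- and $T$-decompositions of $R$ coincide) and that $h$ acts fixed-point-freely on $R\setminus\{1\}$, one computes $\mathbf{N}_{\widehat{H}}(P\la h\ra)=PT$, and since $P\la h\ra\trianglelefteq H$ this gives $H\leqslant PT$, whence $H=P(H\cap T)$ with $P\cap(H\cap T)\leqslant R\cap T=1$ by Dedekind's law. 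Some ingredient of this kind---a coprime element of $T$ forced into $H$ by the factorization together with its fixed-point-free action on $R$---is needed in place of a global coprimality of $|P|$ and $|S|$.
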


\begin{proof}
By the conditions in Hypothesis~\ref{hypo-1}\,\eqref{hypo-1ii}--\eqref{hypo-1iv}, there exists a primitive prime divisor (for convenience, we view $7$ as a primitive prime divisor of $2^6-1$) of $q^{sm/\gcd(2,m+s-1)}-1$, say, $r$. Since $|H|$ is divisible by $|G|/|B|$, it follows that $|H|$ is divisible by $r$.
Hence $H$ contains the unique cyclic subgroup $\la h\ra$ of order $r$ in $T$. Let $N=\mathbf{N}_{\widehat{H}}(P\la h\ra)$, where $\widehat{H}=R{:}T$. It can be seen from~\cite[\S2]{BL2021} (or see Subsection~\ref{subsec_form} below) that the decomposition of $R$ into irreducible submodules as an $S$-module is the same as a $T$-module. This implies that $T$ normalizes $P$. Since $T$ also normalizes $\la h\ra$, we derive that $T\leqslant\mathbf{N}_{\widehat{H}}(P\la h\ra)=N$. Therefore, $N=(R\cap N)T$.

Take an arbitrary $x\in R\cap N$. Then $\la h\ra^x\leqslant(P\la h\ra)^x=P\la h\ra$, and so by Sylow's theorem, there exists $y\in P$ such  that $\la h\ra^x=\la h\ra^y$. In other words, writing $z=xy^{-1}$, we have $h^z\in\la h\ra$. Now
\[
h^{-1}z^{-1}hz=(h^{-1}z^{-1}h)z=h^{-1}(z^{-1}hz)
\]
lies in both $R$ and $\la h\ra$. It follows that $h^{-1}z^{-1}hz=1$, or equivalently, $z^h=z$. Since the action of $h$ on $R\setminus\{1\}$ is fixed-point-free (see~\cite[\S2]{BL2021} or Subsection~\ref{subsec_form} below), this implies $z=1$. Consequently, $x=y\in P$.

Thus we conclude that $R\cap N\leqslant P$. Since $P\leqslant P\la h\ra\leqslant\mathbf{N}_{\widehat{H}}(P\la h\ra)=N$, this leads to $R\cap N=P$ and so $N=(R\cap N)T=PT$. Moreover, as $\la h\ra\trianglelefteq S$, we have $P\la h\ra\trianglelefteq P.S=H$, which means $H\leqslant\mathbf{N}_{\widehat{H}}(P\la h\ra)=N$. Hence $H\cap(PT)=P(H\cap T)$. Then since $P\cap(H\cap T)\leqslant R\cap T=1$, the conclusion of the lemma holds.
\end{proof}

Through the rest of this section we show that, under certain condition on $P$, the transitivity of $H$ on $[G:B]$ is reduced to the transitivity of $S$ on $(\bbF_{q^2}^m)_{(q+1)}$ or $(\bbF_q^m)_{(\gcd(2,q-1))}$, according to whether $G$ is the unitary group or not.
In the next section we will show that the required condition on $P$ is satisfied for all basic (as defined in the Introduction) $H$.

\subsection{Unitary groups}
\ \vspace{1mm}

Throughout this subsection, let $q=p^f$ be a power of a prime $p$, let $m\geqslant2$ be an integer, let $V$ be a vector space of dimension $2m$ over $\bbF_{q^2}$ equipped with a nondegenerate Hermitian form $\beta$, and let $\perp$ denote the perpendicularity with respect to $\beta$. The unitary space $V$ has a standard basis $e_1,f_1,\dots,e_m,f_m$ as in~\cite[2.2.3]{LPS1990}.
Let $\phi\in\GaU(V)=\GaU_{2m}(q)$ such that
\[
\phi\colon a_1e_1+b_1f_1+\dots+a_me_m+b_mf_m\mapsto a_1^pe_1+b_1^pf_1+\dots+a_m^pe_m+b_m^pf_m
\]
for $a_1,b_1\dots,a_m,b_m\in\bbF_{q^2}$, let $U=\la e_1,\dots,e_m\ra$, and let $W=\la f_1,\dots,f_m\ra$.
Moreover, let $\omega$ be a generator of $\bbF_{q^2}^\times$, let $\lambda\in\bbF_{q^2}$ with $\lambda+\lambda^q=1$ (note that such $\lambda$ exists as the trace of the field extension $\bbF_{q^2}/\bbF_q$ is surjective), and let
\[
v=e_1+\lambda f_1.
\]
Then $v$ is nonsingular as $\beta(v,v)=\lambda+\lambda^q=1$, and $\GaU(V)_U$ has a subgroup $R{:}T$, where
\[
R=q^{m^2}
\]
is the kernel of $\GaU(V)_U$ acting on $U$, and
\[
T=\GaL_1(q^{2m})
\]
stabilizes both $U$ and $W$.
Assume $(m,q)\neq(2,2)$ in the following, so that $\SU_{2m-1}(q)$ is nonsolvable.

\begin{proposition}\label{Xia:Unitary01}
Let $\SU_{2m}(q)=L\leqslant G\leqslant\GaU_{2m}(q)$, let $B=G_{\la v\ra}=\N_1[G]$, and let $H=P{:}S<G$ with $P\leqslant R$ and $S\leqslant T$ such that $|P|/|P\cap B|=q^{2m-1}$. Then the following statements hold:
\begin{enumerate}[\rm(a)]
\item\label{Xia:Unitary01d} $P\cap B=P\cap B^{(\infty)}$;
\item\label{Xia:Unitary01a} $H\cap B=(P\cap B){:}S_{\la\omega^{q-1}\ra\lambda f_1}$;
\item\label{Xia:Unitary01b} $H\cap B^{(\infty)}=(P\cap B){:}(S\cap L)_{f_1}$;
\item\label{Xia:Unitary01c} $G=HB$ if and only if $S$ is transitive on $W_{(q+1)}$.
\end{enumerate}
\end{proposition}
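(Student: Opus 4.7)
The plan is to prove (a), (b), (c) in order and derive (d) by counting indices. The central observation is that $R$ is the skew-Hermitian Heisenberg group $\{I+M : M\colon W\to U,\ M^*=-M\}$, so the ``displacement'' map $R\to U$ sending $r\mapsto r(f_1)-f_1$ is an $\bbF_q$-linear homomorphism whose kernel is $R\cap B$ and whose image is the hyperplane $U_R=\{a\in U : \beta(a,f_1)\text{ has zero trace over }\bbF_q\}$ of size $q^{2m-1}$. The hypothesis $|P|/|P\cap B|=q^{2m-1}$ therefore forces $U_P$, the image of $P$ under the same map, to coincide with $U_R$; this identification is the crux of the no-compensation argument below.

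For (a), $P\leqslant R$ is a $p$-group, while $B/B^{(\infty)}=\N_1[G]/\SU_{2m-1}(q)$ is built from the norm-$1$ scalars $\C_{q+1}$, field automorphisms of order $2f$, and the diagonal outer factor, all of order coprime to $p$. Hence $P\cap B$ is a $p$-subgroup of $B$ and lies in $B^{(\infty)}$.

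For (b) and (c), I will analyse $ps\in H\cap B$ by writing $v\cdot ps=(v+\lambda u_p)s$ and imposing $\la v\ra$-invariance. Using the $T$-invariant decomposition $V=U\oplus W$ and that $T$'s actions on $U$ and $W$ are conjugate-dual via $\beta$, the $W$-component of the constraint forces $s$ to stabilise $\la f_1\ra$ with its $\GL_1(q^{2m})$-part in $\bbF_{q^2}^\times$, while the $U$-component pins $u_p$ down uniquely as an explicit scalar multiple of $e_1$ depending on $s$. A short computation using $\lambda+\lambda^q=1$ then shows that the trace-zero condition $u_p\in U_R$ is equivalent to the scalar part of $s$ lying in $\la\omega^{q-1}\ra$, equivalently to $s$ stabilising $\la\omega^{q-1}\ra\lambda f_1$, and that in that case the scalar is already zero, so $u_p=0$ and $p\in P\cap B$. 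This establishes (b). Part (c) follows by the same argument with $B$ replaced by $B^{(\infty)}$: the norm-$1$ scalar and field-auto freedoms disappear, so the $T$-factor collapses to the pointwise stabiliser $(S\cap L)_{f_1}$, while $P\cap B=P\cap B^{(\infty)}$ by (a).

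For (d), the number of non-singular $1$-subspaces of $V$ equals $|G:B|=q^{2m-1}(q^{2m}-1)/(q+1) = q^{2m-1}\cdot|W_{(q+1)}|$, and from (b) together with the hypothesis,
\begin{equation*}
|H:H\cap B| = \frac{|P|\cdot|S|}{|P\cap B|\cdot|S_{\la\omega^{q-1}\ra\lambda f_1}|} = q^{2m-1}\cdot\big|S\cdot(\la\omega^{q-1}\ra\lambda f_1)\big|.
\end{equation*}
Hence $G=HB$ if and only if the $S$-orbit of $\la\omega^{q-1}\ra\lambda f_1$ fills $W_{(q+1)}$, i.e., $S$ is transitive on $W_{(q+1)}$. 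The main obstacle is the no-compensation step in (b), ruling out the possibility $ps\in B$ with $p\notin B$: this is precisely where the identification $U_P=U_R$, the trace characterisation of $U_R$, and the normalisation $\lambda+\lambda^q=1$ must combine to force $u_p=0$; the cases where $s$ has a non-trivial Frobenius component are handled by the same template after tracking the effect of $\varphi^k$ on $\lambda$ and on the identifications $U,W\cong\bbF_{q^{2m}}$.
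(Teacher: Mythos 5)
Your reduction of the hypothesis to $U_P=U_R$ via the displacement map $r\mapsto r(f_1)-f_1$ is a correct reformulation, but the crux of your argument for (b) and (c) --- that whenever $ps\in H\cap B$ the forced displacement $u_p$ is a scalar multiple of $e_1$ and in fact zero, so that $p\in P\cap B$ and $s\in S_{\la\omega^{q-1}\ra\lambda f_1}\leqslant B$ --- is false, and it inverts the actual content of the proposition. If $s$ has field twist $p^i$ and $(\lambda f_1)^s=\eta\lambda f_1$ with $\eta^{q+1}=1$, the $U$-component of the condition $v\cdot ps\in\la v\ra$ gives $u_p=\lambda^{-1}\big(\eta^{p^{-i}}e_1^{s^{-1}}-e_1\big)$; since $T=\GaL_1(q^{2m})$ does not stabilise the line $\la e_1\ra$, this is not a multiple of $e_1$ in general, and it vanishes only when $s$ itself fixes $\la v\ra$. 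The latter genuinely fails for semilinear $s$: pairing $e_1^s$ against $(\lambda f_1)^s$ yields $\beta(e_1^s,\eta\lambda f_1)=\lambda^{qp^i}$, so the $e_1$-coefficient of $e_1^s$ is $\eta\lambda^{q(p^i-1)}$, and $s\in B$ would force $\lambda^{p^i-1}=1$. For $q$ even one has $\lambda\notin\bbF_q$, while $T_{\la\omega^{q-1}\ra\lambda f_1}$ (of order $(q+1)\cdot2mf$) contains elements of every twist, in particular $x\mapsto x^q$; hence $S_{\la\omega^{q-1}\ra\lambda f_1}\nleqslant B$ already for $H=R{:}T$, and your strengthened version of (b) is wrong --- (b) can only mean a split extension whose complement projects isomorphically onto $S_{\la\omega^{q-1}\ra\lambda f_1}$, not that it contains this subgroup. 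More seriously, the direction you actually need for (b) and for the ``if'' half of (d) is the opposite of ``no compensation'': for every $s\in S_{\la\omega^{q-1}\ra\lambda f_1}$ there must \emph{exist} $p\in P$ (unique modulo $P\cap B$) with $ps\in B$, i.e.\ the forced displacement $u(s)$ must lie in $U_P=U_R$; this is precisely where the hypothesis $|P|/|P\cap B|=q^{2m-1}$ enters, and your proposal never proves it. The paper obtains it by showing the induced group of $(P{:}T)\cap Y$ on $U$ lies in $T_{U_1,E+U_1}=T_{\la\omega^{q-1}\ra\lambda f_1}$ and then forcing equality through the automatic bound $|M\cap Y|\geqslant|M||Y|/|X|$, which uses the hypothesis; without this existence step, transitivity of $S$ on $W_{(q+1)}$ does not give $G=HB$, so your (d) is not established.

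Separately, your proof of (a) is not valid as written: $B/B^{(\infty)}$ contains a field-automorphism factor of order $2f$, which is divisible by $p$ whenever $p=2$ or $p\mid f$, so ``all of order coprime to $p$'' fails and a $p$-element of $B$ need not lie in $B^{(\infty)}$ (a field automorphism of order $p$ is a counterexample). The correct route, as in the paper, is to use $P\leqslant R\leqslant\SU_{2m}(q)=L$ first: then $P\cap B\leqslant L_{\la v\ra}$, and $L_{\la v\ra}/L_v$ has order dividing $q+1$, coprime to $p$, whence $P\cap B=P\cap L_v=P\cap B^{(\infty)}$.
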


\begin{proof}
Let $X=\GaU(V)=\GaU_{2m}(q)$, $Y=X_{\la v\ra}$, $M=P{:}T$ and $U_1=\la e_2,\dots,e_m\ra$. Since $Y$ fixes $\la v\ra$, it stabilizes $v^\perp$. Thus $M\cap Y$ stabilizes $U\cap v^\perp=\la e_2,\dots,e_m\ra=U_1$. Note that $B^{(\infty)}=\SU_{2m-1}(q)=L_v$ and hence $H\cap B^{(\infty)}=H\cap L_v=(H\cap L)_v$. Since $P\leqslant R$ is a $p$-group in $L$ while $L_v$ is a normal subgroup of $L_{\la v\ra}$ with index coprime to $p$, we deduce
\[
P\cap B=P\cap Y=P\cap L_{\la v\ra}=P\cap L_v=P\cap B^{(\infty)}
\]
and $H\cap L=P{:}(S\cap L)$. In particular, statement~\eqref{Xia:Unitary01d} holds.

Take an arbitrary element in $M\cap Y$, say, $\phi^ig$ with integer $i$ and element $g$ in $\GU(V)$.
Then $v^{\phi^ig}=\eta v$ for some $\eta\in\bbF_{q^2}^\times$. Hence
\[
1=\beta(v,v)^{p^i}=\beta(v^{\phi^i},v^{\phi^i})=\beta((\eta v)^{g^{-1}},(\eta v)^{g^{-1}})=\beta(\eta v,\eta v)=\eta^{1+q}\beta(v,v)=\eta^{1+q},
\]
which means that $\eta\in\la\omega^{q-1}\ra$. Write $e_1^g=\mu e_1+e$ with $\mu\in\bbF_{q^2}$ and $e\in U_1$. It follows that
\begin{align*}
\mu\eta^q\lambda^q=\beta(\mu e_1+e,\eta(e_1+\lambda f_1))&=\beta(e_1^{\phi^ig},(e_1+\lambda f_1)^{\phi^ig})\\
&=\beta(e_1^g,(e_1+\lambda^{p^i} f_1)^g)=\beta(e_1,e_1+\lambda^{p^i} f_1)=\lambda^{p^iq},
\end{align*}
and so $\mu=\eta^{-q}\lambda^{(p^i-1)q}$. Now $\phi^ig\in M\cap Y$ stabilizes $U_1$, and $e_1^g=\eta^{-q}\lambda^{(p^i-1)q}e_1+e$. Thus
\begin{align*}
(\la\omega^{q-1}\ra\lambda^{-q}e_1+U_1)^{\phi^ig}&=(\la\omega^{q-1}\ra\lambda^{-q}e_1)^{\phi^ig}+U_1^{\phi^ig}\\
&=\la\omega^{(q-1)p^i}\ra\lambda^{-qp^i}e_1^g+U_1\\
&=\la\omega^{q-1}\ra\lambda^{-qp^i}(\eta^{-q}\lambda^{(p^i-1)q}e_1+e)+U_1\\
&=\la\omega^{q-1}\ra\eta^{-q}\lambda^{-q}e_1+U_1\\
&=\la\omega^{q-1}\ra\lambda^{-q}e_1+U_1.
\end{align*}
This shows that $M\cap Y$ stabilizes $\la\omega^{q-1}\ra\lambda^{-q}e_1+U_1$, that is, the induced group of $M\cap Y$ on $U$ is contained in $T_{U_1,\la\omega^{q-1}\ra\lambda^{-q}e_1+U_1}$.
Let $E=\la\omega^{q-1}\ra\lambda^{-q}e_1$. Note that $P$ is the kernel of $M$ acting on $U$, and so $P\cap Y$ is the kernel of $M\cap Y$ acting on $U$. We then have
\begin{equation}\label{eqn:Xia-6}
(M\cap Y)/(P\cap Y)\cong(M\cap Y)^U\leqslant T_{U_1,E+U_1}.
\end{equation}

Let $t$ be an arbitrary element in $T$ with integer $j$ and element $s$ in $\GU(V)$. For each $\xi e_1\in E$, where $\xi\in\la\omega^{q-1}\ra\lambda^{-q}$, the vector $\xi^{-q}f_1$ is the unique one in $W$ such that $\beta(x,\xi^{-q}f_1)=1$ for all $x\in\xi e_1+U_1$. Thus, if $t$ stabilizes $E+U_1$, then $t$ stabilizes $(\la\omega^{q-1}\ra\lambda^{-q})^{-q}f_1=\la\omega^{q-1}\ra\lambda f_1$. Conversely, suppose that $t$ stabilizes $\la\omega^{q-1}\ra\lambda f_1$. Then $t$ stabilizes $U\cap f_1^\perp=U_1$.
Moreover, for each $\zeta\in\la\omega^{q-1}\ra\lambda$, the set of $y\in U$ such that $\beta(y,\zeta f_1)=1$ is $\zeta^{-q}e_1+U_1$.
Accordingly, $t$ stabilizes $(\la\omega^{q-1}\ra\lambda)^{-q}e_1+U_1=E+U_1$. This shows that
\begin{equation}\label{eqn:Xia-7}
T_{U_1,E+U_1}=T_{\la\omega^{q-1}\ra\lambda f_1},
\end{equation}
which in conjunction with~\eqref{eqn:Xia-6} and
\[
\frac{|M\cap Y|}{|P\cap Y|}\geqslant\frac{|M||Y|}{|P\cap Y||X|}=\frac{|P||T||(q+1)|\GaU_{2m-1}(q)|}{|P_v||\GaU_{2m}(q)|}
=\frac{|T|(q+1)}{q^{2m}-1}=|T_{\la\omega^{q-1}\ra\lambda f_1}|
\]
implies that $(M\cap Y)^U=T_{U_1,E+U_1}=T_{\la\omega^{q-1}\ra\lambda f_1}$.
Since $P$ is the kernel of $M$ acting on $U$, it follows that $M_{U_1,E+U_1}=(M\cap Y)P$.
As a consequence, $H_{U_1,E+U_1}=(H\cap Y)P$, and so
\begin{equation}\label{eqn:Xia-5}
(H\cap Y)/(P\cap Y)\cong(H\cap Y)P/P=H_{U_1,E+U_1}/P=S_{U_1,E+U_1}P/P\cong S_{U_1,E+U_1}.
\end{equation}
Note from~\eqref{eqn:Xia-7} that $S_{U_1,E+U_1}=S_{\la\omega^{q-1}\ra\lambda f_1}$. This together with~\eqref{eqn:Xia-5} yields
\[
H\cap B=H_{\la v\ra}=H\cap Y=(P\cap Y).S_{\la\omega^{q-1}\ra\lambda f_1}=P_v{:}S_{\la\omega^{q-1}\ra\lambda f_1},
\]
proving statement~\eqref{Xia:Unitary01a}. Moreover, since $H\cap B^{(\infty)}=(H\cap L)_v$ and $H\cap L=P{:}(S\cap L)$, we obtain by replacing both $\eta$ and $\omega^{q-1}$ by $1$ in the above proof that $H\cap B^{(\infty)}=P_v{:}(S\cap L)_{f_1}$, as statement~\eqref{Xia:Unitary01b} asserts.

Finally, it follows from statement~\eqref{Xia:Unitary01a} that
\[
\frac{|H|}{|H\cap B|}=\frac{|P||S|}{|P_v||S_{\la\omega^{q-1}\ra\lambda f_1}|}=\frac{q^{2m-1}|S|}{|S_{\la\omega^{q-1}\ra\lambda f_1}|}.
\]
Then since $\la\omega^{q-1}\ra\lambda f_1\in W_{(q+1)}$ and
\[
\frac{|G|}{|B|}=\frac{|X|}{|Y|}=\frac{|\GaU_{2m}(q)|}{|\GaU_{2m-1}(q)|(q+1)}=\frac{q^{2m-1}(q^{2m}-1)}{q+1}=q^{2m-1}|W_{(q+1)}|,
\]
we have $|H|/|H\cap B|=|G|/|B|$ if and only if $S$ is transitive on $W_{(q+1)}$, proving statement~\eqref{Xia:Unitary01c}.
\end{proof}

\begin{remark}\label{Xia:Unitary03}
The condition $|P|/|P\cap B|=q^{2m-1}$ in Proposition~\ref{Xia:Unitary01} holds if we take $P=R$. In fact, for each $r\in R_v$, since $r$ fixes $e_1$ and $v$, we deduce that $r$ fixes $\langle e_1,v\rangle=\langle e_1,f_1\rangle$ pointwise. Hence $R_v$ is isomorphic to the pointwise stabilizer in $\SU(\la e_2,f_2,\dots,e_m,f_m\ra)$ of $\la e_2,\dots,e_m\ra$. Then by~\cite[3.6.2]{Wilson2009} we have $R_v=q^{(m-1)^2}$, which gives $|R|/|R_v|=q^{m^2}/q^{(m-1)^2}=q^{2m-1}$, and so $|R|/|R\cap B|=|R|/|R_{\la v\ra}|=q^{2m-1}$.
\end{remark}

\subsection{Orthogonal groups of plus type}
\ \vspace{1mm}

Throughout this subsection, let $q=p^f$ be a power of a prime $p$, let $m\geqslant4$ be an integer,
let $V$ be a vector space of dimension $2m$ over $\bbF_q$ equipped with a nondegenerate quadratic form $Q$ of plus type.
The orthogonal space $V$ has a standard basis $e_1,f_1,\dots,e_m,f_m$ as in~\cite[2.2.3]{LPS1990}. Let $\phi\in\GaO(V)=\GaO_{2m}^+(q)$ such that
\[
\phi\colon a_1e_1+b_1f_1+\dots+a_me_m+b_mf_m\mapsto a_1^pe_1+b_1^pf_1+\dots+a_m^pe_m+b_m^pf_m
\]
for $a_1,b_1\dots,a_m,b_m\in\bbF_q$, let $U=\langle e_1,\dots,e_m\rangle$, let $W=\langle f_1,\dots,f_m\rangle$, and let
\[
v=e_1+f_1.
\]
Then $Q(e_1+f_1)=1$. From~\cite[3.7.4~and~3.8.2]{Wilson2009} we see that $\GaO(V)_U$ has a subgroup $R{:}T$, where
\[
R=q^{m(m-1)/2}
\]
is the kernel of $\GaO(V)_U$ acting on $U$, and
\[
T=\GaL_1(q^m)
\]
stabilizes both $U$ and $W$. Note that the subgroup $\la-1\ra$ of $\bbF_q^\times$ has order $\gcd(2,q-1)$, and so $W_{(\gcd(2,q-1))}$ is the set of orbits of the scalar multiplication of $-1$ on $W\setminus\{0\}$.

\begin{proposition}\label{Xia:Omega01}
Let $\Omega_{2m}^+(q)=L\leqslant G\leqslant\GaO_{2m}^+(q)$, let $B=G_{\la v\ra}=\N_1[G]$, and let $H=P{:}S<G$ with $P\leqslant R$ and $S\leqslant T$ such that $|P|/|P\cap B|=q^{m-1}$. Then the following statements hold:
\begin{enumerate}[\rm(a)]
\item\label{Xia:Omega01d} $P\cap B=P\cap B^{(\infty)}$;
\item\label{Xia:Omega01a} $H\cap B=(P\cap B){:}S_{\la-1\ra f_1}$;
\item\label{Xia:Omega01b} $H\cap B^{(\infty)}=(P\cap B){:}(S\cap L)_{f_1}$;
\item\label{Xia:Omega01c} $G=HB$ if and only if $S$ is transitive on $W_{(\gcd(2,q-1))}$.
\end{enumerate}
\end{proposition}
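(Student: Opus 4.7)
The plan is to mirror the proof of Proposition~\ref{Xia:Unitary01} line-by-line, with two built-in simplifications specific to the orthogonal setting: the Hermitian constraint $\eta^{1+q}=1$ on scalars becomes the orthogonal constraint $\eta^2=1$, so $\eta$ lies in $\la-1\ra$ rather than in $\la\omega^{q-1}\ra$; and since $v=e_1+f_1$ already satisfies $Q(v)=1$ and has $\bbF_p$-rational coordinates, no auxiliary element analogous to $\lambda$ is needed and the Frobenius $\phi$ fixes $v$. Statement~\eqref{Xia:Omega01d} is almost immediate: every element of $P$ is a $p$-element of $L$, and $\bbF_q^\times$ contains no nontrivial element of $p$-power order, so any $x\in P$ with $v^x=\mu v$ necessarily has $\mu=1$. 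Hence $P\cap B=P\cap L_v=P\cap B^{(\infty)}$.

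For~\eqref{Xia:Omega01a} and~\eqref{Xia:Omega01b}, I would let $X=\GaO(V)$, $Y=X_{\la v\ra}$, $M=P{:}T$, and $U_1=U\cap v^\perp=\la e_2,\dots,e_m\ra$. Any $x\in M\cap Y$ factors as $\phi^ig$ with $g\in\GO(V)$; since $v$ is $\bbF_p$-rational, $v^x=v^g=\eta v$ for some $\eta\in\bbF_q^\times$. Applying $Q$, whose coefficients in the standard basis are $\bbF_p$-rational, gives $\eta^2=1$, so $\eta\in\la-1\ra$. Writing $e_1^g=\mu e_1+e$ with $\mu\in\bbF_q$ and $e\in U_1$, the identity $\beta(e_1^g,v^g)=\beta(e_1,v)=1$ combined with $\beta(e_1,e_1)=\beta(e,e_1)=\beta(e,f_1)=0$ and $\beta(e_1,f_1)=1$ forces $\mu\eta=1$. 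A direct verification then gives
\[
(\la-1\ra e_1+U_1)^x=\la-1\ra e_1+U_1,
\]
so with $E=\la-1\ra e_1$ one obtains $(M\cap Y)^U\leqslant T_{U_1,E+U_1}$. A duality argument identical to the one in the unitary case, using that $\zeta^{-1}f_1$ is the vector in $W$ with $\beta(\xi e_1+U_1,\zeta^{-1}f_1)=\{1\}$ whenever $\xi\zeta=1$, identifies $T_{U_1,E+U_1}=T_{\la-1\ra f_1}$. Comparing indices via
\[
\frac{|M\cap Y|}{|P\cap Y|}\geqslant\frac{|M||Y|}{|P\cap Y||X|}=|T_{\la-1\ra f_1}|
\]
forces equality throughout, whence $M_{U_1,E+U_1}=(M\cap Y)P$ and therefore $H_{U_1,E+U_1}=(H\cap Y)P$; statement~\eqref{Xia:Omega01a} then falls out of $H\cap B=H\cap Y$. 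Statement~\eqref{Xia:Omega01b} is the same argument carried out inside $L$, where $\eta$ is forced to equal $1$ because $B^{(\infty)}=L_v$ fixes $v$ pointwise, so $\la-1\ra$ is replaced by $\{1\}$.

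For~\eqref{Xia:Omega01c}, I would use~\eqref{Xia:Omega01a} together with the hypothesis $|P|/|P\cap B|=q^{m-1}$ to compute
\[
\frac{|H|}{|H\cap B|}=\frac{q^{m-1}|S|}{|S_{\la-1\ra f_1}|},
\]
and compare with $|G|/|B|=q^{m-1}|W_{(\gcd(2,q-1))}|$; since $\la-1\ra f_1$ represents a point of $W_{(\gcd(2,q-1))}$, the factorization $G=HB$ is equivalent to $S$ acting transitively on $W_{(\gcd(2,q-1))}$.

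The main obstacle I anticipate is giving a uniform treatment of the cases $q$ even and $q$ odd. In characteristic $2$, $\la-1\ra=1$ and $W_{(\gcd(2,q-1))}=W\setminus\{0\}$, so the stabilizer $T_{\la-1\ra f_1}$ collapses to $T_{f_1}$, and one must verify that the orbit count $|X|/|Y|=q^{m-1}|W_{(\gcd(2,q-1))}|$ still holds; this in turn requires checking that $\la v\ra$ lies in the appropriate $\GaO$-orbit of non-singular $1$-subspaces. A secondary care point is tracking the Frobenius and linear parts of elements of $M\cap Y$ cleanly; this is substantially easier than in the unitary case precisely because the $\bbF_p$-rationality of $v$ makes the action of $\phi$ on $v$ trivial.
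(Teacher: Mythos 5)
Your proposal is correct and takes essentially the same approach as the paper, which simply declares that the orthogonal case ``follows the same lines'' as the unitary Proposition~\ref{Xia:Unitary01} with $Q$ in place of $\beta$, and your line-by-line adaptation (the scalar constraint $\eta^2=1$ replacing $\eta^{1+q}=1$, the disappearance of $\lambda$ thanks to the $\bbF_p$-rationality of $v$, the duality identification $T_{U_1,E+U_1}=T_{\la-1\ra f_1}$, and the final index count) is exactly that adaptation. Your argument for part~\eqref{Xia:Omega01d} is phrased slightly differently from the paper's (you argue pointwise that a $p$-element forces $\mu^{p^k}=1$, the paper invokes that $L_v$ has $p'$-index in $L_{\la v\ra}$), but both are valid and essentially equivalent.
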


\begin{proof}
Let $X=\GaO(V)=\GaO_{2m}^+(q)$, $Y=X_{\la v\ra}$, $M=P{:}T$, $U_1=\langle e_2,\dots,e_m\rangle$, and $E=\la-1\ra e_1$.
The proof of the proposition follows the same lines as that of Proposition~\ref{Xia:Unitary01}, by working with $Q$ instead of $\beta$.
\end{proof}

\begin{remark}\label{Xia:Omega02}
Similarly as in Remark~\ref{Xia:Unitary03}, the condition $|P|/|P\cap B|=q^{m-1}$ in Proposition~\ref{Xia:Omega01} holds if we take $P=R$.
\end{remark}

\subsection{Symplectic groups}
\ \vspace{1mm}

Throughout this subsection, let $q=2^f$ be a power of $2$, let $m\geqslant2$ be an integer with $(m,q)\neq(2,2)$, let $V$ be a vector space of dimension $2m$ over $\bbF_q$ equipped with a nondegenerate alternating form $\beta$, and let $\perp$ denote the perpendicularity with respect to $\beta$. The symplectic space $V$ has a standard basis $e_1,f_1,\dots,e_m,f_m$ as in~\cite[2.2.3]{LPS1990}.
Let $\phi\in\GaSp(V)=\GaSp_{2m}(q)$ such that
\begin{equation}\label{eqn:Xia-18}
\phi\colon a_1e_1+b_1f_1+\dots+a_me_m+b_mf_m\mapsto a_1^2e_1+b_1^2f_1+\dots+a_m^2e_m+b_m^2f_m
\end{equation}
for $a_1,b_1\dots,a_m,b_m\in\bbF_q$, let $U=\langle e_1,\dots,e_m\rangle_{\bbF_q}$, and let $W=\langle f_1,\dots,f_m\rangle_{\bbF_q}$. From~\cite[3.5.4]{Wilson2009} we see that $\GaSp(V)_U=\Pa_m[\GaSp(V)]$ has a subgroup $R{:}T$, where
\[
R=q^{m(m+1)/2}
\]
is the kernel of $\GaSp(V)_U$ acting on $U$, and
\[
T=\GaL_1(q^m)
\]
stabilizes both $U$ and $W$. Take $\mu\in\bbF_q$ such that the polynomial $x^2+x+\mu$ is irreducible over $\bbF_q$.
Let $Q$ be a nondegenerate quadratic form of minus type (namely, elliptic form) on $V$ with associated bilinear form $\beta$ such that $e_1,f_1,\dots,e_m,f_m$ is a standard basis for the orthogonal space $(V,Q)$. Then we have
\[
Q(e_i)=Q(f_i)=0,\quad Q(e_m)=1,\quad Q(f_m)=\mu
\]
for $i\in\{1,\dots,m-1\}$.

\begin{proposition}\label{Xia:Symplectic01}
Let $\Sp_{2m}(q)=L\leqslant G\leqslant\GaSp_{2m}(q)$ with $q$ even, let $B=G\cap\GaO(V,Q)$, and let $H=P{:}S<G$ with $P\leqslant R$ and $S\leqslant T$ such that $|P|/|P\cap B|=q^m/2$. Then the following statements hold:
\begin{enumerate}[\rm(a)]
\item\label{Xia:Symplectic01a} $H\cap B=(P\cap B){:}S_{f_m}$;
\item\label{Xia:Symplectic01b} $|P|/|P\cap B^{(\infty)}|=q^m/2$ or $q^m$;
\item\label{Xia:Symplectic01c} if $|P|/|P\cap B^{(\infty)}|=q^m/2$, then $H\cap B^{(\infty)}=(P\cap B){:}(S\cap B^{(\infty)})_{f_m}$;
\item\label{Xia:Symplectic01d} if $|P|/|P\cap B^{(\infty)}|=q^m$, then $H\cap B^{(\infty)}=(P\cap B^{(\infty)}){:}(S\cap L)_{f_m}$;
\item\label{Xia:Symplectic01e} $G=HB$ if and only if $S$ is transitive on $W_{(1)}$.
\end{enumerate}
\end{proposition}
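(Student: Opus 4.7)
The proof will follow the template of Propositions~\ref{Xia:Unitary01} and~\ref{Xia:Omega01}, with the essential new twist that $B$ is the stabilizer of the quadratic form $Q$ rather than of a single nonsingular vector. Set $X=\GaSp(V)$, $M=P{:}T$, and $Y=X\cap\GaO(V,Q)$, so that $B=G\cap Y$ and $H\cap B=H\cap Y$. The plan is to compute $M\cap Y$ and then intersect with $H$.

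The main computation begins by parameterizing $R$: every $r\in R$ fixes $U$ pointwise and sends $f_i\mapsto f_i+u_i(r)$ with $u_i(r)\in U$ subject to the symplectic symmetry $(u_i(r))_j=(u_j(r))_i$ (where $(u)_k$ is the $e_k$-coefficient), giving $|R|=q^{m(m+1)/2}$. The condition $r\in Y$ requires $Q(f_i+u_i(r))=Q(f_i)$ for each $i$, which via $Q(f_i+u)=Q(f_i)+Q(u)+\beta(f_i,u)$ reduces to $(u_i(r))_i=0$ for $i<m$ and $(u_m(r))_m\in\{0,1\}$; hence $|R\cap Y|=2q^{m(m-1)/2}$ and $|R|/|R\cap Y|=q^m/2$, matching the hypothesis. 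For general $pt\in M$ with $p\in P$ and $t\in T$, writing $t(f_m)=\sum_i c_i(t)f_i$ and computing $Q(pt(f_m))$ shows that $pt\in Y$ forces $c_m(t)^2=1$, so $t(f_m)\in f_m+\la f_1,\ldots,f_{m-1}\ra$; the remaining components of $t(f_m)$ in $\la f_1,\ldots,f_{m-1}\ra$ can be absorbed by choosing $p$ appropriately. An index count then gives $(M\cap Y)^T=T_{f_m}$ modulo $P\cap Y$, and intersecting with $H$ yields~\eqref{Xia:Symplectic01a}. Statement~\eqref{Xia:Symplectic01e} follows: $|H|/|H\cap B|=(q^m/2)\,|S|/|S_{f_m}|$ equals $|G|/|B|$ iff $|S|/|S_{f_m}|=q^m-1=|W_{(1)}|$, i.e., iff $S$ is transitive on $W_{(1)}$.

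For~\eqref{Xia:Symplectic01b}--\eqref{Xia:Symplectic01d}, the key observation is that $B^{(\infty)}=\Omega_{2m}^-(q)$ has index~$2$ in $B\cap L=\rmO_{2m}^-(q)$, the quotient being detected by the Dickson invariant. The two-element choice $(u_m(r))_m\in\{0,1\}$ identified above corresponds precisely to this index; hence $P\cap B^{(\infty)}$ equals either $P\cap B$ or a subgroup of index~$2$ therein, according to whether the generator of the extension with $(u_m)_m=1$ lies in $\Omega(V,Q)$ or not, which gives~\eqref{Xia:Symplectic01b}. Applying the argument of~\eqref{Xia:Symplectic01a} with $Y$ replaced by the subgroup whose linear part is contained in $\Omega(V,Q)$ then yields~\eqref{Xia:Symplectic01c} and~\eqref{Xia:Symplectic01d}, with $S_{f_m}$ replaced by $(S\cap B^{(\infty)})_{f_m}$ or $(S\cap L)_{f_m}$ according to which case of~\eqref{Xia:Symplectic01b} holds. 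The main technical obstacle is this bookkeeping of the Dickson-invariant dichotomy, along with verifying that the coset structure of $H\cap B$ descends correctly to $H\cap B^{(\infty)}$ in each case.
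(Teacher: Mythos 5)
There is a genuine gap in your proof of part~\eqref{Xia:Symplectic01a}, which is the step everything else ((c), (d), (e)) rests on. Your plan is to show that $pt\in Y$ (with $p\in P$, $t\in T$) forces $t(f_m)\in f_m+\la f_1,\ldots,f_{m-1}\ra$ and then to ``absorb the remaining components of $t(f_m)$ by choosing $p$ appropriately.'' This absorption is impossible: $P\leqslant R$ is the unipotent radical of $\Pa_m$, so every $p\in P$ fixes $U$ pointwise and induces the identity on $V/U$; consequently the $W$-component (mod $U$) of $f_m^{pt}$ is determined by $t$ alone and cannot be altered by any choice of $p$. What is actually true (and what the paper proves) is the stronger statement that $pt\in Y$ forces $t$ to fix $f_m$ \emph{exactly}; your argument only yields membership in a coset, and the set of $t\in T$ with $t(f_m)\in f_m+\la f_1,\ldots,f_{m-1}\ra$ is in general much larger than $T_{f_m}$ (and not even a subgroup), so the subsequent index count cannot close the identification of the image of $M\cap Y$ in $T$ with $T_{f_m}$. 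Moreover, even the intermediate deduction ``$Q(pt(f_m))$ preserved forces $c_m(t)^2=1$'' is unjustified: $Q(f_m)=\mu\notin\bbF_2$ in general, a semilinear element of $Y$ only preserves $Q$ up to the field twist of $\mu$, and the unipotent part contributes the terms $Q(u')+\beta(t(f_m),u')$, so the single equation you write down does not isolate $c_m$. (A related slip: the condition for $r\in R\cap Y$ is $(u_i(r))_i=(u_i(r))_m^2$ for $i<m$, not $(u_i(r))_i=0$, because $Q(e_m)=1$; the count $|R\cap Y|=2q^{m(m-1)/2}$ happens to be unaffected, but the same overlooked $Q(e_m)$-contribution is what derails the $Q(f_m^{pt})$ computation.)

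The way to repair this — and the route the paper takes — is to work with the action on $U$ rather than on $W$: since every element of $M\cap Y$ stabilizes $U$ and preserves the $Q$-level sets with values in $\bbF_2$ (which are fixed by field automorphisms), $M\cap Y$ stabilizes $U_1=\{u\in U\mid Q(u)=0\}$ and $U_1+e_m=\{u\in U\mid Q(u)=1\}$; then a duality argument with the form $\beta$ (namely that $f_m$ is the unique vector of $W$ pairing to $1$ with all of $U_1+e_m$) gives $T_{U_1,U_1+e_m}=T_{f_m}$, so the image of $M\cap Y$ in $T$ lies in $T_{f_m}$, and the order estimate $|M\cap Y|/|P\cap Y|\geqslant|M||Y|/(|P\cap Y||X|)=|T_{f_m}|$ (using $|P|/|P\cap B|=q^m/2$) forces equality, from which $H\cap B=(P\cap B){:}S_{f_m}$ follows. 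Your treatment of (b) via the index-$2$ Dickson dichotomy, and your derivations of (c), (d) by rerunning the argument inside $L$ and of (e) by the order comparison, are in the same spirit as the paper and would be fine once (a) is established correctly, but as written they inherit the gap above.
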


\begin{proof}
Let $X=\GaSp(V)=\GaSp_{2m}(q)$, $Y=\GaO(V,Q)=\GaO_{2m}^-(q)$, $M=P{:}T$ and $U_1=\langle e_1,\dots,e_{m-1}\rangle$.
Note that, if $v\in V$ with $Q(v)\in\bbF_2$, then $Q(v^g)=Q(v)$ for all $g\in Y$. Then as $M$ stabilizes $U$, the subgroup $M\cap Y$ stabilizes the sets $\{u\in U\mid Q(u)=0\}=U_1$ and $\{u\in U\mid Q(u)=1\}=U_1+e_m$. Hence $(M\cap Y)^U\leqslant T_{U_1,U_1+e_m}$.
Note that $P$ is the kernel of $M$ acting on $U$, and so $P\cap Y$ is the kernel of $M\cap Y$ acting on $U$. We then have
\begin{equation}\label{eqn:Xia-26}
(M\cap Y)/(P\cap Y)\cong(M\cap Y)^U\leqslant T_{U_1,U_1+e_m}.
\end{equation}
Let $t$ be an arbitrary element in $T$ with integer $j$ and element $s$ in $\Sp(V)$. Note that the vector $f_m$ is the unique one in $W$ such that $\beta(x,f_m)=1$ for all $x\in U_1+e_m$. Thus, if $t$ stabilizes $U_1+e_m$, then $t$ stabilizes $f_m$. Conversely, suppose that $t$ stabilizes $f_m$. Then $t$ stabilizes $U\cap f_m^\perp=U_1$.
Moreover, the set of $y\in U$ such that $\beta(y,f_m)=1$ is $U_1+e_m$.
Accordingly, $t$ stabilizes $U_1+e_m$. This shows that
\begin{equation}\label{eqn:Xia-27}
T_{U_1,U_1+e_m}=T_{f_m},
\end{equation}
which in conjunction with~\eqref{eqn:Xia-26} and
\[
\frac{|M\cap Y|}{|P\cap Y|}\geqslant\frac{|M||Y|}{|P\cap Y||X|}=\frac{|P||T||Y|}{|P\cap B||X|}=\frac{|T|}{q^m-1}=|T_{f_m}|
\]
implies that $(M\cap Y)^U=T_{f_m}$. Since $P$ is the kernel of $M$ acting on $U$, it follows that $M_{U_1,U_1+e_m}=(M\cap Y)P$.
As a consequence, $H_{U_1,U_1+e_m}=(H\cap Y)P$, and so
\begin{equation}\label{eqn:Xia-25}
(H\cap Y)/(P\cap Y)\cong(H\cap Y)P/P=H_{U_1,U_1+e_m}/P=S_{U_1,U_1+e_m}P/P\cong S_{U_1,U_1+e_m}.
\end{equation}
Note from~\eqref{eqn:Xia-27} that $S_{U_1,U_1+e_m}=S_{f_m}$. This together with~\eqref{eqn:Xia-25} yields
\[
H\cap B=H\cap Y=(P\cap Y).S_{f_m}=(P\cap B){:}S_{f_m},
\]
proving statement~\eqref{Xia:Symplectic01a}.

Since $P\leqslant R<L$ and $B^{(\infty)}=\Omega(V,Q)$ has index $2$ in $\mathrm{O}(V,Q)=L\cap B$, it follows that $|P\cap B|/|P\cap B^{(\infty)}|=1$ or $2$. Hence we derive from $|P|/|P\cap B|=q^m/2$ that $|P|/|P\cap B^{(\infty)}|=q^m$ or $q^m/2$, proving statement~\eqref{Xia:Symplectic01b}. If $|P\cap B|/|P\cap B^{(\infty)}|=1$, that is, $P\cap B<B^{(\infty)}$, then
\begin{align*}
H\cap B^{(\infty)}=(H\cap B)\cap B^{(\infty)}&=((P\cap B){:}S_{f_m})\cap B^{(\infty)}\\
&=(P\cap B){:}(S_{f_m}\cap B^{(\infty)})=(P\cap B){:}(S\cap B^{(\infty)})_{f_m},
\end{align*}
as statement~\eqref{Xia:Symplectic01c} asserts. If $|P\cap B|/|P\cap B^{(\infty)}|=2$, then replacing $X$, $Y$, $M$ and $H$ by $L$, $B^{(\infty)}$, $M\cap L$ and $H\cap L$, respectively, in the proof of statement~\eqref{Xia:Symplectic01a}, we obtain $H\cap B^{(\infty)}=(P\cap B^{(\infty)}){:}(S\cap L)_{f_m}$, as statement~\eqref{Xia:Symplectic01d} asserts.

Finally, it follows from statement~\eqref{Xia:Symplectic01a} that
\[
\frac{|H|}{|H\cap B|}=\frac{|P||S|}{|P\cap B||S_{f_m}|}=\frac{q^m|S|}{2|S_{f_m}|}.
\]
Then since $|G|/|B|=|X|/|Y|=2q^m(q^m-1)/2$, we have $|H|/|H\cap B|=|G|/|B|$ if and only if $S$ is transitive on $W$. Thus statement~\eqref{Xia:Symplectic01e} holds.
\end{proof}

\begin{remark}\label{rem:Xia-4}
If we take $P=R$ in Proposition~\ref{Xia:Symplectic01}, then $P\cap B^{(\infty)}=R\cap\Omega(V,Q)$ is the pointwise stabilizer of $\la e_1,\dots,e_{m-1}\ra$ in $\Omega(V,Q)_{e_m}=\Omega_{2m-1}(q)$. As the (setwise) stabilizer in $\Omega(V,Q)_{e_m}$ of $\la e_1,\dots,e_{m-1}\ra$ is $\Pa_{m-1}[\Omega_{2m-1}(q)]$, it follows that $R\cap B^{(\infty)}=q^{m(m-1)/2}$, and so $|R|/|R\cap B^{(\infty)}|=q^m$. Moreover, since the reflection on $V$ with respect to $e_m$ is an element in $R\cap\mathrm{O}(V,Q)=R\cap B$, we have $|R\cap B|=2|R\cap B^{(\infty)}|$. Thus $|R|/|R\cap B|=q^m/2$.
\end{remark}

\section{Unipotent radical}\label{sec:Xia-7}

Throughout this section, let $q=p^f$ be a power of a prime $p$, and let $m\geqslant2$ be an integer satisfying the conditions in Table~\ref{Form}. For convenience, we introduce parameters $s\in\{1,2\}$ and $\epsilon\in\{-1,-1/2,0\}$ as in Table~\ref{Form}, and set
\[
r=q^s.
\]
Let $V=\bbF_{r^m}\times\bbF_{r^m}$, and for positive integers $i$ and $j$, let $\Tr_{q^{ij}/q^i}$ denote the relative trace function from $\bbF_{q^{ij}}$ to $\bbF_{q^i}$.
Consider the form $\kappa_\epsilon$ on $V$ as defined in Table~\ref{Form}, where $a,b,c,d$ are arbitrary in $\bbF_{r^m}$.

\begin{table}[h]
\centering\caption{The parameters and form of the isometry group $G_0$}\label{Form}
\begin{tabular}{|c|c|c|c|c|}
\hline
$G_0$ & $s$ & $\epsilon$ & $\kappa_\epsilon$ & Conditions\\
\hline
$\GU_{2m}(q)$ & $2$ & $-1/2$ & $\kappa_{-1/2}((a,b),(c,d))=\Tr_{r^m/r}(ad^q+b^rc^q)$ & $(m,q)\neq(2,2)$ \\
$\mathrm{O}_{2m}^+(q)$ & $1$ & $-1$ & $\kappa_{-1}((a,b))=\Tr_{r^m/r}(ab)$ & $m\geqslant4$ \\
$\Sp_{2m}(q)$ & $1$ & $0$ & $\kappa_{0}((a,b),(c,d))=\Tr_{r^m/r}(ad+bc)$ & $q$ even, $(m,q)\neq(2,2)$ \\
\hline
\end{tabular}
\end{table}

For our purpose, regard $V$ as a $2m$-dimensional vector space over $\bbF_r$. Then $\kappa_{-1/2}$ is a nondegenerate Hermitian form,  $\kappa_{-1}$ is a nondegenerate quadratic form, and $\kappa_0$ is a nondegenerate alternating form. Hence $G_0$ is the isometry group of the polar space $(V,\kappa_\epsilon)$, and in the notation of Section~\ref{sec:Xia-1}, the pointwise stabilizer of $U=\bbF_{r^m}\times\{0\}$ in $G_0$ is the unipotent radical $R$ of $(G_0)_U=\Pa_m[G_0]$.
The aim of this section is to describe the subgroup $H\cap R$ of $R$ for the solvable factor $H$.

Note from the conditions in Table~\ref{Form} that we always have
\begin{equation}\label{eqn:Xia-10}
smf\geqslant3.
\end{equation}
Also, the parameter $\epsilon$ is defined in such a way that the number of $1$-dimensional totally isotropic (singular) subspaces of $V$ is $(r^{m+\epsilon}+1)(r^m-1)/(r-1)$.

\subsection{Construction of irreducible modules}\label{subsec_AuxIrrMod}
\ \vspace{1mm}

Regarding $R$ as a module of the Singer group (or any of its subgroup of index dividing $\gcd(smf,r^m-1)$) in $G_0^U=\GL_m(r)$, the key to describe submodules of $R$ is to decompose $R$ into a sum of irreducible submodules (such a decomposition exists by Maschke's theorem). In this subsection, we construct a decomposition of $R$ into a sum of irreducible $\bbF_{r^m}^\times$-modules. However, it remains to show that the constructed action of $\bbF_{r^m}^\times$ on $R$ here is the same as that of the Singer group in $G_0^U=\GL_m(r)$.
This will be shown in the next subsection by embedding $\bbF_{r^m}^\times$ into $G_0^U$ to establish its compatibility with the action of $G_0^U$ on $R$.

To construct irreducible $\bbF_{r^m}^\times$-modules, we work in the vector space
\[
\mc{L}_m(\bbF_r)=\left\{\sum_{i=0}^{m-1}a_iX^{r^i}\;\Big|\; a_0,\ldots,a_{m-1}\in\bbF_{r^m}\right\}\subseteq\bbF_{r^m}[X]/(X^{r^m}-X)
\]
of dimension $sm^2$ over $\bbF_q$ (elements of $\mc{L}_m(\bbF_r)$ are called \emph{linearized polynomials} over $\bbF_{r^m}$), and take the following $\bbF_q$-subspaces of $\mc{L}_m(\bbF_r)$.
\begin{align*}
&M(0)=\{aX\mid a\in\bbF_{r^m}\},\\
&M(i)=\{aX^{r^i}-a^{r^{m-i}q^{s-1}}X^{r^{m-i}q^{2s-2}}(\bmod\,X^{r^m}-X)\mid a\in\bbF_{r^m}\}\,
\text{ for }1\leqslant i\leqslant \left\lfloor(m+s)/2\right\rfloor-1,\\
&M\left((m+s-1)/2\right)=\big\{aX^{r^{m/2}q^{s-1}}\mid a\in\bbF_{r^m},\, a+a^{r^{m/2}}=0\big\}\,\text{ if $m+s$ is odd}.
\end{align*}
It is easy to see that
\begin{equation}\label{eqn:Xia-20}
\dim_{\bbF_q}M(i)=
\begin{cases}
sm&\text{if }0\leqslant i\leqslant\left\lfloor(m+s)/2\right\rfloor-1,\\
sm/2&\text{if }i=(m+s-1)/2\text{ with $m+s$ odd}.
\end{cases}
\end{equation}
Since $M(0),M(1),\ldots,M(\lfloor(m+s-1)/2\rfloor)$ are distinguished by the monomials occurring in their elements, their sum is a direct sum. Let
\begin{equation}\label{eqn_overM}
\overline{M}=\begin{cases}
M(1)\oplus\cdots\oplus M(\lfloor(m+s-1)/2\rfloor)&\text{if }\epsilon=-1 \text{ or }-1/2,\\
M(0)\oplus M(1)\oplus\cdots\oplus M(\lfloor(m+s-1)/2\rfloor)&\text{if }\epsilon=0.
\end{cases}
\end{equation}
Then it holds for all $\epsilon\in\{-1,-1/2,0\}$ that
\[
\dim_{\bbF_q}(\overline{M})=\frac{sm(m+1)}{2}+\epsilon ms=\dim_{\bbF_q}(R).
\]

To turn $\mc{L}_m(\bbF_r)$ into an $\bbF_{r^m}^\times$-module, define an action of $\bbF_{r^m}^\times$ on $\mc{L}_m(\bbF_r)$ by letting
\[
(a.\hbar)(X)=a^{q^{s-1}}\hbar(aX)\,\textup{ for }a\in \bbF_{r^m}^\times\text{ and }\hbar(X)\in\mc{L}_m(\bbF_r).
\]
It is routine to check that the $M(i)$'s are $\bbF_{r^m}^\times$-submodules of $\mc{L}_m(\bbF_r)$. Now we show that they are pairwise non-isomorphic irreducible $\bbF_{r^m}^\times$-modules.

\begin{lemma}\label{lem_MiIrr}
For $0\leqslant i\leqslant\lfloor(m+s-1)/2\rfloor$, let $\chi_i$ be the character of $\bbF_{r^m}^\times$ afforded by the module $M(i)$. Then each $M(i)$ is an irreducible $\bbF_{r^m}^\times$-module, and for $x\in\bbF_{r^m}^\times$ we have
\begin{equation*}
  \chi_i(x)=\begin{cases}
  \Tr_{r^m/q}(x^{q^{s-1}+1})&\textup{if } i=0,\\
  \Tr_{r^m/q}(x^{r^{i-1}q+1})&\textup{if } 1\leqslant i\leqslant\lfloor(m+s)/2\rfloor-1,\\
  \Tr_{r^{m/2}/q}(x^{r^{m/2}+1})&\textup{if $i=(m+s-1)/2$ with $m+s$ odd.}
  \end{cases}
\end{equation*}
In particular, $M(0),M(1),\ldots,M(\lfloor(m+s-1)/2\rfloor)$ are pairwise non-isomorphic as $\bbF_{r^m}^\times$-modules.
\end{lemma}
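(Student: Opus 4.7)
The plan is to realize each $M(i)$ as a one-dimensional vector space over a subfield $\bbF_K$ of $\bbF_{r^m}$ on which $\bbF_{r^m}^\times$ acts by scalar multiplication; this reduces the three assertions (character formula, irreducibility, and pairwise non-isomorphism) to tracking a single scalar and performing elementary size estimates on its exponent. Substituting directly into $(b.\hbar)(X)=b^{q^{s-1}}\hbar(bX)$, the natural parameter $a$ of $M(i)$ transforms under $b\in\bbF_{r^m}^\times$ as multiplication by $b^{q^{s-1}+1}$ when $i=0$, by $b^{r^i+q^{s-1}}$ when $1\leqslant i\leqslant\lfloor(m+s)/2\rfloor-1$, and by $b^{q^{s-1}(r^{m/2}+1)}$ when $i=(m+s-1)/2$. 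In the middle range I check consistency between the two monomials of a generator using only $r^m\equiv1\pmod{r^m-1}$, and in the exceptional case I note that the scalar has $r^{m/2}$-power equal to itself, so it lies in $\bbF_{r^{m/2}}$ and preserves $V_0=\{a\in\bbF_{r^m}:a^{r^{m/2}}=-a\}$.

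The character values follow at once from the fact that multiplication by $\lambda\in\bbF_K$ on $\bbF_K$ has $\bbF_q$-trace $\Tr_{K/q}(\lambda)$. The three raw formulas $\Tr_{r^m/q}(b^{q^{s-1}+1})$, $\Tr_{r^m/q}(b^{r^i+q^{s-1}})$, and $\Tr_{r^{m/2}/q}(b^{q^{s-1}(r^{m/2}+1)})$ are rewritten into the statement of the lemma via $\Tr_{K/q}(x^q)=\Tr_{K/q}(x)$, together with the integer identity $r^i+q^{s-1}=q(r^{i-1}q+1)$ (tautological when $s=1$, straightforward when $s=2$) and the fact that $b^{r^{m/2}+1}$ already lies in $\bbF_{r^{m/2}}$.

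For irreducibility I use that, since $M(i)$ is one-dimensional over $\bbF_K$, any $\bbF_q$-subspace invariant under $\bbF_{r^m}^\times$ is a subspace over the $\bbF_q$-subalgebra generated by the image of $\bbF_{r^m}^\times$ in $\bbF_K$; irreducibility is therefore equivalent to this subalgebra being all of $\bbF_K$. For $M(0)$ in the symplectic setting ($s=1$, $q$ even) squaring is a bijection on $\bbF_{r^m}^\times$, and for the exceptional case the norm $b\mapsto b^{r^{m/2}+1}$ surjects onto $\bbF_{r^{m/2}}^\times$, so both cases are immediate. The real content lies in the middle range, where the exponent equals $q^\alpha+1$ with $\alpha=i$ when $s=1$ and $\alpha=2i-1$ when $s=2$; in both cases one checks that $\alpha<sm/2$, and then for any proper divisor $n$ of $sm$ the chain
\[
(q^{sm}-1)/(q^n-1)>q^{sm-n}\geqslant q^{sm/2}>q^\alpha+1
\]
rules out $\{b^{q^\alpha+1}:b\in\bbF_{r^m}^\times\}\subseteq\bbF_{q^n}^\times$, so the image generates all of $\bbF_{r^m}$.

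For pairwise non-isomorphism I compare $\bar\bbF_q$-Galois orbits of the exponents under multiplication by $q$ modulo $r^m-1$. A putative congruence $q^{\alpha_i}+1\equiv(q^{\alpha_j}+1)q^\ell\pmod{q^{sm}-1}$ forces an integer identity $q^{\alpha_i}+1=q^a+q^b$ with $a,b\in\{0,\ldots,sm-1\}$, since both sides are bounded above by $2q^{sm-1}<q^{sm}-1$; matching $q$-adic digits together with $\alpha_i,\alpha_j<sm/2$ then yields $\alpha_i=\alpha_j$. An analogous digit-comparison rules out Galois-conjugacy between the exceptional exponent $q^{sm/2}+1$ and any middle-range exponent. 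The principal obstacle is not any single hard step but the uniform treatment of the three cases $\epsilon\in\{-1,-1/2,0\}$: the parameter $\alpha$ is read differently in $s=1$ and $s=2$, and the crucial bound $\alpha<sm/2$ must be extracted directly from the range $1\leqslant i\leqslant\lfloor(m+s)/2\rfloor-1$ in each case.
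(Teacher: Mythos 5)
Your proposal follows essentially the same route as the paper's proof: read off the scalar by which the parameter $a$ of $M(i)$ transforms, obtain $\chi_i$ as the trace of that multiplication map, prove irreducibility by showing the $\bbF_q$-subfield generated by the image of the exponent map is the whole field via a size bound amounting to $\alpha<sm/2$ (the paper phrases this as $(r^{i-1}q+1)(r^{m/2}-1)<r^m-1$), and separate the modules by their characters, where you are in fact more explicit than the paper in comparing $q$-cyclotomic cosets of the exponents. Two of your stated inequalities fail at $q=2$, which is a relevant case here: $q^{sm/2}>q^\alpha+1$ breaks when $sm$ is odd and $\alpha=1$ (e.g.\ $s=1$, $m=3$, $q=2$ for $\Sp_6(2)$), and $2q^{sm-1}<q^{sm}-1$ is false for $q=2$; both are harmless, since one can instead use $(q^{sm}-1)/(q^n-1)\geqslant q^{sm/2}+1>q^\alpha+1$ for a proper divisor $n$ of $sm$, and in the digit argument the two exponents $a,b$ are distinct, so $q^a+q^b\leqslant q^{sm-1}+q^{sm-2}<q^{sm}-1$. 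Like the paper, you only treat $M(0)$ in the symplectic setting; this matches how the lemma is actually used (note that for $s=2$ the formulas give $\chi_0=\chi_1$, so the $i=0$ module cannot be separated from $M(1)$ by characters there, a point both your argument and the paper's ``distinct characters'' remark quietly sidestep).
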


\begin{proof}
Fix a generator $\omega$ of $\bbF_{r^m}^\times$ and an integer $i$ with $1\leqslant i\leqslant \lfloor(m+s)/2\rfloor-1$. For $a\in\bbF_{r^m}$, let
\[
f_a^{(i)}=aX^{r^i}-a^{r^{m-i}q^{s-1}}X^{r^{m-i}q^{2s-2}}.
\]
Since $x.f_a^{(i)}=f_{a'}^{(i)}$ with $a'=x^{r^i+q^{s-1}}a$,  by \cite[Exercise~2.26]{LN97} we deduce that
\[
\chi_i(x)=\Tr_{r^m/q}\left(x^{r^{i}+q^{s-1}}\right)=\Tr_{r^m/q}\left(x^{r^{i-1}q+1}\right).
\]

Let $\mathcal{F}_i$ be the $\bbF_{q}$-span of $\{\mu^{r^i+q^{s-1}}\mid\mu\in\bbF_{r^m}^\times\}$, which is an $\bbF_{q}$-subspace of $\bbF_{r^m}$. We claim that $\mathcal{F}_i=\bbF_{r^m}$. The finite set $\mathcal{F}_i$ is closed under addition and multiplication, and so is a subfield of $\bbF_{r^m}$ that contains $\bbF_{q}$. It is straightforward to check that $(r^{i-1}q+1)(r^{m/2}-1)<r^m-1$ for $i\leqslant\lfloor(m+s)/2\rfloor-1$. Therefore, $\omega^{r^i+q^{s-1}}$ has order strictly larger than $r^{m/2}-1$, and so does not lie in any proper subfield of $\bbF_{r^m}$. The claim $\mathcal{F}_i=\bbF_{r^m}$ then follows.

For any nonzero element $f_a^{(i)}$ in $M(i)$, we have  $\bbF_{r^m}.f_a^{(i)}=\{f_b^{(i)}\mid b\in \mathcal{F}_i\cdot a\}=M(i)$ by the fact $\mathcal{F}_i=\bbF_{r^m}$. Thus we conclude that $M(i)$ is irreducible for $1\leqslant i\leqslant\lfloor(m+s)/2\rfloor-1$.

The remaining $M(i)$'s are handled in a similar way, and we omit the details. The $\bbF_{r^m}^\times$-modules $M(0),M(1),\ldots,M(\lfloor(m+s-1)/2\rfloor)$ are irreducible  with distinct characters, whence they are pairwise non-isomorphic. This completes the proof.
\end{proof}

The conclusion in Lemma \ref{lem_MiIrr} that $M(0),\ldots,M(\lfloor(m+s-1)/2\rfloor)$ are pairwise non-isomorphic irreducible $\bbF_{r^m}^\times$-modules can be strengthened to the next result. We mention that the condition
\begin{equation}\label{eqn_exception}
(s,m,q)\notin\{(2,3,2), (1,2,8), (1,4,3), (1,6,2)\}
\end{equation}
excludes the groups $\SU_6(2)$, $\Sp_4(8)$, $\Omega_8^+(3)$, $\Omega_{12}^+(2)$ and $\Sp_{12}(2)$ as Hypothesis~\ref{hypo-1} does.

\begin{lemma}\label{lem_MiIrrNd0}
Suppose \eqref{eqn_exception} and let $N$ be a subgroup of index dividing $\gcd(smf,r^m-1)$ in $\bbF_{r^m}^\times$. Then $M(0),M(1),\ldots, M(\lfloor(m+s-1)/2\rfloor)$ are pairwise non-isomorphic irreducible $N$-modules.
\end{lemma}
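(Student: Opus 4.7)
The plan is to bootstrap Lemma~\ref{lem_MiIrr} from $\bbF_{r^m}^\times$ to $N$. Write $d=[\bbF_{r^m}^\times:N]$, so $d\mid\gcd(smf,r^m-1)$ and $N=\la\omega^d\ra$. As in the proof of Lemma~\ref{lem_MiIrr}, the correspondence $f_a^{(i)}\leftrightarrow a$ identifies each $M(i)$ with an $\bbF_q$-subspace $V_i$ of $\bbF_{r^m}$ (equal to $\bbF_{r^m}$ for $i<(m+s-1)/2$, and to the $\bbF_q$-hyperplane $\{a\in\bbF_{r^m}\mid a+a^{r^{m/2}}=0\}\cong\bbF_{r^{m/2}}$ otherwise), on which $x\in\bbF_{r^m}^\times$ acts as multiplication by $x^{e_i}$, where $e_0=q^{s-1}+1$, $e_i=r^{i-1}q+1$ for $1\le i\le\lfloor(m+s)/2\rfloor-1$, and $e_{(m+s-1)/2}=r^{m/2}+1$. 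The $N$-submodules of $V_i$ are exactly its $\bbF_q(\omega^{de_i})$-subspaces. Therefore $V_i\big|_N$ is irreducible if and only if $\bbF_q(\omega^{de_i})$ equals the full field of scalars acting on $V_i$, i.e.\ $\bbF_{r^m}$ in the full-dimensional case and $\bbF_{r^{m/2}}$ in the half-dimensional case.

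For irreducibility I would apply Zsigmondy's theorem to produce a primitive prime divisor $\ell$ of $q^{sm}-1$ (respectively of $q^{sm/2}-1$ for the half-dimensional case), so that $\ell\mid|\omega^{de_i}|$ will force $\omega^{de_i}\notin\bbF_{q^k}$ for every proper divisor $k$ of $sm$ (resp.\ $sm/2$). Since $\ell\equiv1\pmod{sm}$ gives $\ell>sm$ and $d\mid smf$, the obstruction $\ell\mid d$ reduces to $\ell\mid f$. In the generic range this is ruled out by the bound $e_i<r^{m/2}+q$ compared with $\ell$, yielding $\ell\nmid de_i$. The Zsigmondy-exceptional parameters, and those where the primitive prime accidentally divides $f$ or $e_i$, have to be checked directly: the four triples excluded in~\eqref{eqn_exception} are exactly the small cases where the generic argument breaks, and our sharpened hypothesis has already removed them (matching the exclusions in Hypothesis~\ref{hypo-1}).

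For pairwise non-isomorphism I would decompose each $V_i\big|_N$ over $\overline{\bbF_q}$ into a Galois orbit of one-dimensional characters and observe that $V_i\big|_N\cong V_j\big|_N$ forces $e_i\equiv e_jq^t\pmod{(r^m-1)/d}$ for some $t$; equivalently the characters $\chi_i,\chi_j$ of Lemma~\ref{lem_MiIrr} become equal upon restriction to $N$. The key point is that each $e_i$ is strictly less than $r^{m/2}+q$, and the list of possible Frobenius shifts $e_jq^t\bmod(r^m-1)/d$ is controlled: a short size estimate shows $de_jq^t<r^m-1$ in the relevant range, which forces an actual equality $e_i=e_jq^t$ over $\mathbb{Z}$, and the already-established distinctness on $\bbF_{r^m}^\times$ (Lemma~\ref{lem_MiIrr}) together with the $q$-adic shape of the exponents $r^{i-1}q+1$ rules that out.

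The main obstacle will be the simultaneous handling of the case $\ell\mid f$ and the Zsigmondy exceptions: here one cannot appeal to a single primitive prime and must instead bound $\gcd(r^m-1,de_i)$ directly, using the congruence $r^i\equiv-q^{s-1}\pmod{r^{i-1}q+1}$ (and its analogues for $e_0$ and $e_{(m+s-1)/2}$) to show that every prime factor of $(r^m-1)/\gcd(r^m-1,e_i)$ is coprime to $d$ whenever $(s,m,q)$ avoids~\eqref{eqn_exception}. This is the delicate arithmetic step, but it is essentially a finite case check once the generic Zsigmondy argument is in place.
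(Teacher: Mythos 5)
Your reduction of irreducibility to the field condition $\bbF_q(\omega^{de_i})=\bbF_{r^m}$ (resp.\ $\bbF_{r^{m/2}}$ in the half-dimensional case) is exactly the criterion the paper works with, but the arithmetic you propose to verify it has a genuine gap in two places. First, ruling out $\ell\mid e_i$ by a size comparison cannot work: a primitive prime divisor of $q^{sm}-1$ is only guaranteed to satisfy $\ell\equiv1\pmod{sm}$, so $\ell$ may be far smaller than $e_i=r^{i-1}q+1\approx q^{si-s+1}$, and the bound $e_i<r^{m/2}+q$ says nothing about divisibility. What is needed is the order argument: if $\ell\mid r^i+q^{s-1}=q^{s-1}(q^{si-s+1}+1)$, then $\ell\mid q^{2(si-s+1)}-1$, contradicting primitivity because $2(si-s+1)<sm$ for $1\leqslant i\leqslant\lfloor(m+s)/2\rfloor-1$; this is how the paper proceeds, and your closing congruence $r^i\equiv-q^{s-1}\pmod{r^{i-1}q+1}$ only gestures at it without carrying it out. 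Second, taking the primitive prime with respect to the base $q$ creates the contingency $\ell\mid f$ (hence possibly $\ell\mid d$), and this is neither a finite list nor confined to the four triples of \eqref{eqn_exception}: for instance with $s=1$, $m=5$, $q=2^{31}$ the prime $31$ is a base-$q$ primitive prime divisor of $q^5-1$ that divides both $f$ and $d=\gcd(smf,r^m-1)$, a case your hypothesis does not exclude, so your ``generic argument breaks exactly at the excluded triples'' claim is unsubstantiated. The paper avoids this issue entirely by choosing a primitive prime divisor $t$ of $p^{smf}-1$ (base $p$, exponent $smf$): then $t\geqslant smf+1>d$, so $\gcd(t,d)=1$ automatically, and the only cases requiring direct inspection are the genuine Zsigmondy exceptions ($p=2$, $smf=6$, plus $smf=4$ and $(p,smf)=(2,12)$ in the half-dimensional case), which is precisely where the exclusions in \eqref{eqn_exception} come from.

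A smaller point: in the non-isomorphism step, your claim that a size estimate gives $de_jq^t<r^m-1$ and hence an equality $e_i=e_jq^t$ over $\mathbb{Z}$ is not correct as stated, since $e_jq^t$ easily exceeds $(r^m-1)/d$ for large $t$; the paper simply compares the restricted trace characters as in Lemma~\ref{lem_MiIrr}. That part is repairable, but the irreducibility argument as outlined does not go through without replacing the size comparison by the primitivity/order argument and changing the Zsigmondy base from $q$ to $p$.
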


\begin{proof}
Let $d=\gcd(smf,r^m-1)$ and fix a generator $\omega$ of $\bbF_{r^m}^\times$. It suffices to prove the lemma under the assumption that $N$ has index $d$ in $\bbF_{q^{sm}}^\times$. For $0\leqslant i\leqslant\lfloor(m+s-1)/2\rfloor$, let $\omega_0=\omega^{r^i+q^{s-1}}$ if $i\leqslant\lfloor(m+s)/2\rfloor-1$ and let $\omega_0=\omega^{r^{m/2}+1}$ otherwise. Let $\mathcal{F}_i=\bbF_q[\omega_0^d]$ be the smallest extension field of $\bbF_q$ that contains $\omega_0^d$. By the same arguments as in the proof of Lemma \ref{lem_MiIrr}, the $M(i)$'s are irreducible $N$-modules if and only if the following claims hold:
\begin{enumerate}[\rm(a)]
\item\label{MiIrrNd0a} $\mathcal{F}_i=\bbF_{r^m}$ for $0\leqslant i\leqslant \lfloor(m+s)/2\rfloor-1$;
\item\label{MiIrrNd0b} $\mathcal{F}_i=\bbF_{r^{m/2}}$ if $m+s$ is odd and $i=(m+s-1)/2$.
\end{enumerate}
Recall from~\eqref{eqn:Xia-10} that $smf\geqslant3$.

First assume that $0\leqslant i\leqslant\lfloor(m+s)/2\rfloor-1$. If $p^{smf}-1$ has no primitive prime divisor, then by~\cite{Zsigmondy1892}, $p=2$ and $smf=6$. In this case, $d=3$, and after examining the possible $(s,m,f)$ triples, we see that $\mathcal{F}_i=\bbF_{r^m}$ holds unless $(s,m,f)$ is one of $(1,2,3),(2,3,1),(1,6,1)$. However, this gives the triple $(s,m,q)$ contradicting~\eqref{eqn_exception}. Now we consider the case where $p^{smf}-1$ has a primitive prime divisor, say, $t$. In this case, $t\geqslant 1+smf$, and so $\gcd(d,t)=1$. Suppose to the contrary that $\omega_0^d=\omega^{d(r^i+q^{s-1})}$ is in a proper subfield $\bbF_{q^{d_0}}$ of $\bbF_{r^m}$. Then $d_0<sm$, and
\[
(r^{m}-1)\mid d(r^i+q^{s-1})(q^{d_0}-1).
\]
In particular, $i\geqslant 1$. It follows that $t$ divides $r^i+q^{s-1}=q^{s-1}(q^{si-s+1}+1)$ and hence divides $q^{2(si-s+1)}-1$, which implies that $smf\leqslant2f(si-s+1)$, that is, $sm\leqslant2(si-s+1)$. However, this does not hold for $1\leqslant i\leqslant\lfloor(m+s)/2\rfloor-1$, a contradiction. This establishes~\eqref{MiIrrNd0a}.

Next assume that $m+s$ is odd and $i=(m+s-1)/2$. Then $\omega_0=\omega^{r^{m/2}+1}$ is a generator of $\bbF_{r^{m/2}}^\times$. If $p^{smf/2}-1$ has a primitive prime divisor, then similar argument as above proves~\eqref{MiIrrNd0b}. Thus, by~\cite{Zsigmondy1892}, it remains to consider the cases $smf=4$ and $(p,smf)=(2,12)$, respectively. If $smf=4$, then~\eqref{MiIrrNd0b} is equivalent to $\bbF_{p^f}[\omega_0^{d}]=\bbF_{p^2}$. This does not hold if and only if $f=1$ and $(p^2-1)/d$ divides $p-1$, and the latter occurs only for the tuple $(s,m,q)=(1,4,3)$, which is excluded by~\eqref{eqn_exception}. If $(p,smf)=(2,12)$, then $d=3$, and~\eqref{MiIrrNd0b} is equivalent to $\bbF_{2^f}[\omega_0^{3}]=\bbF_{2^6}$. The latter always holds, since $\omega_0^{3}$ has order $21$ and does not lie in $\bbF_{2^2}$ or $\bbF_{2^3}$.

We have now shown that each $M(i)$ is an irreducible $N$-module. The claim that they are pairwise non-isomorphic follows by comparing their characters as in Lemma \ref{lem_MiIrr}. This completes the proof.
\end{proof}

\subsection{Decomposition of unipotent radical}\label{subsec_form}
\ \vspace{1mm}

Recall the definition of $\mc{L}_m(\bbF_r)$ in Subsection~\ref{subsec_AuxIrrMod}. We consider its subset
\[
\mc{L}_m^\mathrm{P}(\bbF_r)=\{\hbar\in\mc{L}_m(\bbF_r)\mid \hbar \text{ induces a permutation of $\bbF_{r^m}$}\}.
\]
It is well known that an element $\hbar\in\mc{L}_m(\bbF_r)$ induces a permutation of $\bbF_{r^m}$ if and only if $0$ is its only root in $\bbF_{r^m}$. For two elements $\hbar_1$ and $\hbar_2$ in $\mc{L}_m^\mathrm{P}(\bbF_r)$, we define
\[(\hbar_1\circ \hbar_2)(X)=\hbar_1(\hbar_2(X))\pmod{X^{q^m}-X}.\]
Then $(\mc{L}_m^\mathrm{P}(\bbF_r),\circ)$ forms a group isomorphic to $\GL_m(r)$. For  $\hbar\in\mc{L}_m^\mathrm{P}(\bbF_r)$, write $\hbar^{-1}$ for its inverse. We refer the reader to \cite[Chapter 3.4]{LN97} for more details.

\begin{lemma}\label{prop_hbars}
For each $\hbar\in \mc{L}_m^\mathrm{P}(\bbF_r)$, there is a unique element $\hbar^{(s)}\in\mc{L}_m^\mathrm{P}(\bbF_r)$ such that
\begin{equation}\label{eqn_tt2LP}
\Tr_{r^m/r}\left(\hbar(x)\big(\hbar^{(s)}(y)\big)^{q^{s-1}}\right)=\Tr_{r^m/r}(xy^{q^{s-1}})\,\textup{ for all }x,y\in\bbF_{r^m}.
\end{equation}
In particular, if $\hbar(X)=aX$ for some $a\in\bbF_{r^m}^\times$, then $\hbar^{(s)}(X)=a^{-r^{m-1}q}X$. Moreover, we have $(\hbar_1\circ\hbar_2)^{(s)}=\hbar_1^{(s)}\circ\hbar_2^{(s)}$ and $(\hbar^{-1})^{(s)}=(\hbar^{(s)})^{-1}$ for $\hbar_1,\hbar_2,\hbar\in\mc{L}_m^\mathrm{P}(\bbF_r)$.
\end{lemma}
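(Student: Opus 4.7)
\medskip

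\noindent\textbf{Proof proposal.} The key observation is that the defining identity
\[
\Tr_{r^m/r}\!\left(\hbar(x)\,\hbar^{(s)}(y)^{q^{s-1}}\right)=\Tr_{r^m/r}\!\left(x\,y^{q^{s-1}}\right)
\]
asserts that $\hbar^{(s)}$ is the adjoint of $\hbar$ with respect to the form
\[
\beta_s(x,y):=\Tr_{r^m/r}(x\,y^{q^{s-1}})
\]
on $\bbF_{r^m}$ viewed as an $\bbF_r$-vector space. The plan is to build $\hbar^{(s)}$ as this adjoint and then read off the various properties formally.

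First I would show that $\beta_s$ is a nondegenerate $\bbF_r$-sesquilinear form: it is $\bbF_r$-linear in $x$, and with respect to the involution $\lambda\mapsto\lambda^{q^{s-1}}$ of $\bbF_r$ (trivial for $s=1$, the Frobenius of $\bbF_{q^2}$ for $s=2$) it is semilinear in $y$. Nondegeneracy follows from nondegeneracy of the standard trace pairing $(u,v)\mapsto\Tr_{r^m/r}(uv)$ together with the fact that $y\mapsto y^{q^{s-1}}$ is a bijection of $\bbF_{r^m}$. Next, for a fixed $\hbar\in\mc{L}_m^{\mathrm{P}}(\bbF_r)$, the map $x\mapsto\beta_s(\hbar(x),z)$ is, as $z$ runs through $\bbF_{r^m}$, a universal family of $\bbF_r$-linear functionals (because $\hbar$ is a bijection and $\beta_s$ is nondegenerate), so for each $y$ there is a unique $z_y\in\bbF_{r^m}$ with $\beta_s(\hbar(x),z_y)=\beta_s(x,y)$ for all $x$. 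Setting $\hbar^{(s)}(y)$ to be the unique preimage of $z_y$ under $u\mapsto u^{q^{s-1}}$, and using the involutive nature of this map together with the semilinearity of $\beta_s$, one checks that $y\mapsto\hbar^{(s)}(y)$ is $\bbF_r$-linear. Since the ring of $\bbF_r$-linear endomorphisms of $\bbF_{r^m}$ is exactly $\mc{L}_m(\bbF_r)$, this produces the required linearized polynomial; bijectivity of $\hbar^{(s)}$ then follows because a nontrivial kernel would contradict nondegeneracy of $\beta_s$ via the defining identity.

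For the special case $\hbar(X)=aX$ with $a\in\bbF_{r^m}^\times$, I would just verify the candidate $\hbar^{(s)}(X)=a^{-r^{m-1}q}X$ directly. Using $r=q^s$, so that $r^{m-1}q\cdot q^{s-1}=r^{m-1}q^s=r^m$ and hence $a^{-r^{m-1}q\cdot q^{s-1}}=a^{-r^m}=a^{-1}$ (since $a^{r^m}=a$), the left-hand side of the defining identity equals
\[
\Tr_{r^m/r}\!\left(ax\cdot a^{-r^{m-1}q\cdot q^{s-1}}y^{q^{s-1}}\right)=\Tr_{r^m/r}(xy^{q^{s-1}}),
\]
which matches. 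The composition formula $(\hbar_1\circ\hbar_2)^{(s)}=\hbar_1^{(s)}\circ\hbar_2^{(s)}$ is a one-line computation: substitute $\hbar_1^{(s)}\circ\hbar_2^{(s)}$ into the defining identity for $\hbar_1\circ\hbar_2$ and apply the identity for $\hbar_1$ with $(u,v)=(\hbar_2(x),\hbar_2^{(s)}(y))$ and then for $\hbar_2$ with $(x,y)$; uniqueness then forces the equality. Taking $\hbar_1=\hbar^{-1}$, $\hbar_2=\hbar$ and noting $\mathrm{id}^{(s)}=\mathrm{id}$ yields $(\hbar^{-1})^{(s)}=(\hbar^{(s)})^{-1}$.

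The main obstacle I expect is the $s=2$ case, where the form $\beta_s$ is only semilinear in $y$: one must track carefully that applying $(\cdot)^{q^{s-1}}$ to pass from $z_y$ to $\hbar^{(s)}(y)$ restores $\bbF_r$-linearity, so that $\hbar^{(s)}$ genuinely lies in $\mc{L}_m(\bbF_r)$ rather than in some larger space of semilinear maps. Once this bookkeeping is done, the rest is formal adjoint algebra.
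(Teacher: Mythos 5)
Your proposal is correct, and its overall spirit (viewing $\hbar^{(s)}$ as the adjoint of $\hbar$ with respect to the twisted trace pairing and exploiting nondegeneracy for uniqueness) matches the paper's; the difference lies in how existence is established. The paper is constructive: writing $\hbar^{-1}(X)=\sum_{i}b_iX^{r^i}$ and substituting $z=\hbar(x)$, it massages the trace identity into an explicit linearized polynomial $f(X)=\sum_{j}b_{m-j}^{r^{j-1}q}X^{r^j}$, then checks $f\in\mc{L}_m^\mathrm{P}(\bbF_r)$ and uniqueness, with the case $\hbar(X)=aX$ falling out of the formula. You instead get existence abstractly from nondegeneracy of $\beta_s(x,y)=\Tr_{r^m/r}(xy^{q^{s-1}})$ plus a counting/duality argument, invoke the standard identification $\mathrm{End}_{\bbF_r}(\bbF_{r^m})=\mc{L}_m(\bbF_r)$ to see that the adjoint is again a linearized permutation polynomial, and verify the case $\hbar(X)=aX$ by direct substitution (your computation $r^{m-1}q\cdot q^{s-1}=r^m$ is exactly the point); the composition and inverse statements are then the same formal uniqueness arguments as in the paper. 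Your route avoids the coefficient bookkeeping and makes the semilinearity issue for $s=2$ explicit (your linearity check via uniqueness, pulling $\lambda^{q^{s-1}}\in\bbF_r$ through $\hbar$, is sound), at the cost of not producing the general explicit formula for $\hbar^{(s)}$ — which the paper does not need beyond the $aX$ case anyway, so nothing is lost for later applications such as the description of $\ell_\omega$.
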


\begin{proof}
For brevity, write $\Tr=\Tr_{r^m/r}$ in this proof. Take an arbitrary $\hbar\in \mc{L}_m^\mathrm{P}(\bbF_r)$, and write $\hbar^{-1}(X)=\sum_{i=0}^{m-1}b_iX^{r^i}$. By setting $z=\hbar(x)$ and raising both sides to the $q$-th power, the condition \eqref{eqn_tt2LP} translates to $\Tr(z^{q}(\hbar^{(s)}(y))^r)=\Tr((\hbar^{-1}(z))^{q}y^r)$ for all $y,z\in\bbF_{r^m}$. We have
\begin{align*}
\Tr\left(\big(\hbar^{-1}(x)\big)^qy^r\right)=\sum_{i=0}^{m-1}\Tr\left(b_i^{q}z^{r^iq}y^r\right)
&=\sum_{i=0}^{m-1}\Tr\left(b_i^{r^{m-i}q}y^{r^{m-i}+1}z^{q}\right)\\
&=\sum_{j=1}^{m}\Tr\left(b_{m-j}^{r^jq}y^{r^j+1}z^{q}\right)=\Tr\left(z^{q}\big(f(y)\big)^r\right),
\end{align*}
where $f(X)=\sum_{j=0}^{m-1}b_{m-j}^{r^{j-1}q}X^{r^j}$ with $b_m=b_0$. Suppose that $f(y)=0$ for some $y\in\bbF_{r^m}^\times$. Then $\Tr((\hbar^{-1}(x))^qy^r)=0$ for all $x\in\bbF_{r^m}$. Since $x\mapsto(\hbar^{-1}(x))^q$ is a permutation of $\bbF_{r^m}$, we deduce that $\Tr(ay)=0$ for all $a\in\bbF_{r^m}$, a contradiction. Therefore, $f(X)$ belongs to $\mc{L}_m^\mathrm{P}(\bbF_r)$.

Suppose that $f_0(X)$ is an element of $\mc{L}_m^\mathrm{P}(\bbF_r)$ distinct from $f(X)$ satisfying the condition $\Tr(z^{q}(f_0(y))^r)=\Tr((\hbar^{-1}(z))^qy^r)$ for all $y,z\in\bbF_{r^m}$. It follows that $\Tr(z^{r}(f_0(y)-f(y))^q)=0$ for all $y,z\in\bbF_{r^m}$. Since $f-f_0$ is a nonzero reduced polynomial, there is an element $y_0\in\bbF_{r^m}$ such that $c=f_0(y_0)-f(y_0)\ne 0$. It follows that $\Tr(c^rz^{q})=0$ for all $z\in\bbF_{r^m}$, a contradiction. Hence $\hbar^{(s)}(X)=f(X)$ is the unique element desired in the lemma.

If $\hbar(X)=aX$, then $\hbar^{-1}(X)=a^{-1}X$ and so $\hbar^{(s)}(X)=a^{-r^{m-1}q}X$ by the previous analysis. Take arbitrary $\hbar_1,\hbar_2\in\mc{L}_m^\mathrm{P}(\bbF_r)$. For all $x,y\in\bbF_{r^m}$, we have
\begin{align*}
\Tr\left(\hbar_1(\hbar_2(x))\Big(\hbar_1^{(s)}\big(\hbar_2^{(s)}(y)\big)\Big)^{q^{s-1}}\right)
=\Tr\left(\hbar_2(x)\Big(\hbar_2^{(s)}(y)\Big)^{q^{s-1}}\right)=\Tr(xy^{q^{s-1}}).
\end{align*}
It follows that $(\hbar_1\circ\hbar_2)^{(s)}=\hbar_1^{(s)}\circ\hbar_2^{(s)}$ by the above uniqueness result. Then taking $\hbar_1=\hbar$ and $\hbar_2=\hbar^{-1}$, we deduce that $(\hbar^{-1})^{(s)}=(\hbar^{(s)})^{-1}$, completing the proof.
\end{proof}

For each element $\hbar\in\overline{M}$, there is a vector $\bfa=(a_0,\ldots,a_{m-1})\in\bbF_{r^m}^m$ such that $\hbar=\hbar_{\bfa}$, where
\begin{equation}\label{eqn:Xia-14}
\hbar_{\bfa}=\hbar_{\bfa}(X):=\sum_{i=0}^{m-1}a_iX^{r^i}.
\end{equation}

\begin{lemma}\label{lem_Mprop}
Let $\hbar=\hbar_\bfa\in\overline{M}$ with $\bfa=(a_0,\ldots,a_{m-1})\in\bbF_{r^m}^m$. The following statements hold:
\begin{enumerate}[{\rm(a)}]
\item\label{lem_Mpropa} $a_{m-i+2(1-s^{-1})}+a_i^{r^{m-i}q^{s-1}}=0$ for $1\leqslant i\leqslant \lfloor(m+s-1)/2\rfloor$, where the subscripts of $a$ are taken modulo $m$;
\item\label{lem_Mpropb} if $\epsilon=-1$, then $a_0=0$;
\item\label{lem_Mpropc} if $\epsilon=0$, then $\Tr_{q^m/q}(\hbar(x)y+\hbar(y)x)=0$ for all $x,y\in\bbF_{r^m}$;
\item\label{lem_Mpropd} if $\hbar\in M(1)\oplus\cdots\oplus M(\lfloor(m+s-1)/2\rfloor)$, then $\Tr_{r^m/q}(\hbar(x)x^{q^{s-1}})=0$ for all $x\in\bbF_{r^m}$.
\end{enumerate}
\end{lemma}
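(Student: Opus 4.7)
Parts (a) and (b) are direct consequences of the definitions. Since $\overline{M}$ is an $\bbF_q$-direct sum and the submodules $M(i)$ are mutually distinguished by the monomials $X^{r^k}$ occurring in them, one can uniquely decompose $\hbar=\sum_j\hbar_j$ with $\hbar_j\in M(j)$ and read off the coefficients $a_k$ of $\hbar=\hbar_\bfa$. For $1\leqslant i\leqslant\lfloor(m+s)/2\rfloor-1$, the generator $aX^{r^i}-a^{r^{m-i}q^{s-1}}X^{r^{m-i}q^{2s-2}}$ of $M(i)$, after reducing the second exponent modulo $X^{r^m}-X$, involves precisely the pair of monomials $(X^{r^i},X^{r^{m-i+2(1-s^{-1})}})$ with indices interpreted modulo $m$; matching coefficients yields the relation in~(a). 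For the middle index $i=(m+s-1)/2$ when $m+s$ is odd, only the single monomial $X^{r^{m/2}q^{s-1}}$ appears, and the defining constraint $a+a^{r^{m/2}}=0$ translates to the same identity. For (b), the case $\epsilon=-1$ forces $s=1$, and every $M(i)\subseteq\overline{M}$ has $i\geqslant 1$, so its elements produce only monomials $X^{r^k}$ with $1\leqslant k\leqslant m-1$; hence $a_0=0$.

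For (c) we combine (a) with the Frobenius invariance $\Tr(z^q)=\Tr(z)$. Expanding $\hbar(x)y+\hbar(y)x$ and rewriting each term $\Tr_{q^m/q}(a_iy^{q^i}x)$ as $\Tr_{q^m/q}(a_i^{q^{m-i}}yx^{q^{m-i}})$, the whole expression collapses to
\[
\sum_{k=0}^{m-1}\Tr_{q^m/q}\big((a_k+a_{m-k}^{q^k})yx^{q^k}\big)
\]
with the convention $a_m:=a_0$. The $k=0$ coefficient is $2a_0=0$ in characteristic two. For each $k$ with $1\leqslant k\leqslant m-1$, applying (a) with $i=\min\{k,m-k\}$ (or with $i=m/2$ when $m$ is even and $k=m/2$) shows $a_{m-k}^{q^k}=a_k^{q^m}=a_k$, so every coefficient vanishes.

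For (d) we treat each summand $\hbar_j\in M(j)$ individually. For $1\leqslant j\leqslant\lfloor(m+s)/2\rfloor-1$, expanding $\hbar_j(x)x^{q^{s-1}}$ yields two monomials in $x$; a short exponent-arithmetic check shows that raising the second to the Frobenius power $q^{s(j-1)+1}$ (which preserves $\Tr_{r^m/q}$) transforms it into the first, so the two terms cancel under the trace. For the middle index $j=(m+s-1)/2$ when $m+s$ is odd, substituting $y=x^{q^{s-1}}$ gives $\hbar_j(x)x^{q^{s-1}}=ay^{r^{m/2}+1}$ with $y^{r^{m/2}+1}\in\bbF_{r^{m/2}}$; factoring $\Tr_{r^m/q}=\Tr_{r^{m/2}/q}\circ\Tr_{r^m/r^{m/2}}$ reduces the trace to $\Tr_{r^{m/2}/q}\big((a+a^{r^{m/2}})y^{r^{m/2}+1}\big)=0$ by the defining condition on the middle $M(j)$.

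The principal bookkeeping difficulty lies in two exceptional features of the parametrization: the index shift $2(1-s^{-1})\in\{0,1\}$ in~(a), which accounts for the wrap-around modulo $m$ when $r^{m-i}q^{2s-2}$ exceeds $r^{m-1}$ (an issue that only arises in the unitary case), and the distinguished middle term $M((m+s-1)/2)$ arising when $m+s$ is odd, which must be handled separately throughout. Once these conventions are pinned down, every identity reduces to elementary manipulations of linearized polynomials together with the Frobenius invariance of the absolute trace.
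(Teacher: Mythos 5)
Your proof is correct and rests on the same elementary mechanism as the paper's: reading (a) and (b) off the definition of $\overline{M}$, and then cancelling the two monomials coming from each $M(j)$ under the trace via Frobenius invariance (with the middle module handled through $\Tr_{r^m/r^{m/2}}$ and the condition $a+a^{r^{m/2}}=0$), which is exactly the pairing $i\leftrightarrow m-i$ the paper performs inside its double-sum expansion. The only difference is organizational and in coverage: you argue summand-by-summand and write out part (c) and both cases $s\in\{1,2\}$ of part (d) explicitly, whereas the paper presents only the case $\epsilon=-1$ of (d) and declares the remaining verifications similar.
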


\begin{proof}
Statements~\eqref{lem_Mpropa} and~~\eqref{lem_Mpropb} can be deduced directly from the definition of $\overline{M}$. The proofs of the statements~\eqref{lem_Mpropc} and~\eqref{lem_Mpropd} are similar, so here we only include the proof for~\eqref{lem_Mpropd} in the case $\epsilon=-1$. Suppose that $\hbar\in M(1)\oplus\cdots\oplus M(\lfloor(m+s-1)/2\rfloor)$ and $\epsilon=-1$. Then $s=1$, $a_0=0$ and $a_{m-i}+a_i^{q^{m-i}}=0$ for $1\leqslant i\leqslant \lfloor m/2\rfloor$.
Note that $\Tr_{q^m/q}(\hbar(x)x)=\sum_{i=1}^{m-1}\sum_{j=0}^{m-1}a_i^{q^j}x^{q^{i+j}+q^j}$ for all $x\in\bbF_{r^m}$. If $i\neq m-i$, then
\begin{align*}
\sum_{j=0}^{m-1}a_i^{q^j}x^{q^{i+j}}x^{q^j}+\sum_{j=0}^{m-1}a_{m-i}^{q^j}x^{q^{m-i+j}}x^{q^j}
=&\sum_{j=0}^{m-1}a_i^{q^j}x^{q^{i+j}}x^{q^j}-\sum_{j=0}^{m-1}a_{i}^{q^{m-i+j}}x^{q^{m-i+j}}x^{q^j}\\
=&\sum_{j=0}^{m-1}a_i^{q^j}x^{q^{i+j}}x^{q^j}-\sum_{k=0}^{m-1}a_i^{q^k}x^{q^k}x^{q^{i+k}}=0.
\end{align*}
If $i=m-i$, then $a_i+a_i^{q^i}=0$ and hence
\[\sum_{j=0}^{m-1}a_i^{q^j}x^{q^{i+j}}x^{q^j}
=\sum_{j=0}^{i-1}a_i^{q^j}x^{q^{i+j}}x^{q^j}+\sum_{j=0}^{i-1}a_i^{q^{i+j}}x^{q^{i+j}}x^{q^j}
=\sum_{j=0}^{i-1}(a_i+a_i^{q^i})^{q^j}x^{q^{i+j}}x^{q^j}=0.\]
Therefore, $\Tr_{r^m/q}(\hbar(x)x^{q^{s-1}})=\Tr_{q^m/q}(\hbar(x)x)=0$, as required.
\end{proof}

For each $\hbar_\bfa\in\overline{M}$ with $\bfa\in\bbF_{r^m}^m$ and for each $\hbar\in \mc{L}_m^\mathrm{P}(\bbF_r)$, we define
\begin{align*}
u_\bfa:&\,(x,y)\mapsto(x+\hbar_\bfa(y),y)\;\textup{ for } (x,y)\in V, \\
\ell_{\hbar}:&\, (x,y)\mapsto(\hbar(x),\hbar^{(s)}(y))\;\textup{ for } (x,y)\in V,
\end{align*}
where $\hbar_\bfa$ is as in~\eqref{eqn:Xia-14} and $\hbar^{(s)}$ is as in Lemma~\ref{prop_hbars}.
Both $u_\bfa$ and $\ell_{\hbar}$ are nondegenerate linear transformations of $V$, and they are isometries of $(V,\kappa_\epsilon)$ by Lemma \ref{lem_Mprop} and Lemma~\ref{prop_hbars}. Define
\begin{equation}\label{eqn_RLevi}
R_m=\{u_\bfa\mid \hbar_\bfa\in \overline{M}\}\ \text{ and }\ L_m=\{\ell_{\hbar}\mid \hbar\in\mc{L}_m^\mathrm{P}(\bbF_r)\}.
\end{equation}
They both lie in the stabilizer $(G_0)_U=\Pa_m[G_0]$ of $U=\bbF_{r^m}\times\{0\}$ in the isometry group $G_0$ of $(V,\kappa_\epsilon)$.
Since $|R_m|=|\overline{M}|=r^{\epsilon m+m(m+1)/2}$ and $L_m\cong\GL_m(r)$, we have $(G_0)_U=R_m{:}L_m$ by comparing orders.
Thus we may (and will) identify $R_m$ with $R$, the unipotent radical of $(G_0)_U=\Pa_m[G_0]$.

Let $\omega$ be a generator of $\bbF_{r^m}^\times$, and for $a\in\bbF_{r^m}^\times$, let $\rho_a\in\mc{L}_m^\mathrm{P}(\bbF_r)$ be such that $\rho_a(X)=a^{-q^{s-1}}X$.  For brevity, we write $\ell_a=\ell_{\rho_a}$ for $a\in\bbF_{r^m}^\times$. As Lemma~\ref{prop_hbars} asserts,
\[
\ell_\omega((x,y))=(\omega^{-q^{s-1}}x,\omega y)\,\textup{ for }(x,y)\in V.
\]
Define a Singer group $S_0$ of the Levi subgroup $L_m$ by taking
\begin{equation*}
S_0=\la\ell_\omega\ra.
\end{equation*}
The action of $S_0$ on $R$ via conjugation is $\ell_\omega.t_\bfa=t_{\bfa'}$, where $\hbar_{\bfa'}(X)=\omega^{q^{s-1}}\hbar_\bfa(\omega X)$. This action coincides with that of $\bbF_{r^m}^\times$ on $\overline{M}$ by identifying $S_0$ and $R$ with $\bbF_{r^m}^\times$ and $\overline{M}$ respectively.
For each $M(i)$ contained in $\overline{M}$, define a subgroup $U(i)$ of the unipotent radical $R$ by letting
\[
U(i)=\{u_{\bfa}\mid\hbar_\bfa\in M(i)\}.
\]
The result below is an immediate corollary of~\eqref{eqn:Xia-20} and Lemmas~\ref{lem_MiIrr} and~\ref{lem_MiIrrNd0}.

\begin{corollary}\label{Cor_UiIrr}
Let $\mathcal{U}=\{U(i)\mid0\leqslant i\leqslant\lfloor(m+s-1)/2\rfloor,\,U(i)\leqslant R\}$. Then the following statements hold:
\begin{enumerate}[{\rm(a)}]
\item $U(i)=q^{sm}$ for $0\leqslant i\leqslant\lfloor(m+s)/2\rfloor-1$, and $U((m+s-1)/2)=q^{sm/2}$ if $m+s$ is odd;
\item for $U(i)\in\mathcal{U}$, the character $\chi_i$ of $S_0$ afforded by the module $U(i)$ satisfies
    \begin{equation*}
    \chi_i(x)=\begin{cases}
    \Tr_{r^m/q}(x^{q^{s-1}+1})&\textup{if } i=0,\\
    \Tr_{r^m/q}(x^{r^{i-1}q+1})&\textup{if } 1\leqslant i\leqslant \lfloor(m+s)/2\rfloor-1,\\
    \Tr_{r^{m/2}/q}\left(x^{r^{m/2}+1}\right)&\textup{if $i=(m+s-1)/2$ with $m+s$ odd;}
    \end{cases}
    \end{equation*}
\item the groups in $\mathcal{U}$ are  all the minimal  $S_0$-invariant subgroups of the unipotent radical $R$, and they are pairwise non-isomorphic as $S_0$-modules;
\item if~\eqref{eqn_exception} holds and $N$ is a subgroup of $S_0$ of index dividing $\gcd(smf,r^m-1)$, then an $N$-invariant subgroup of $R$ is a direct sum of some $U(i)$'s in $\mathcal{U}$.
\end{enumerate}
\end{corollary}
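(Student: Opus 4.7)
The plan is that all four parts are translations, via the identification of $R$ with $\overline{M}$ and $S_0$ with $\bbF_{r^m}^\times$ set up in Subsection~\ref{subsec_form}, of facts established in \eqref{eqn:Xia-20} and Lemmas~\ref{lem_MiIrr} and~\ref{lem_MiIrrNd0}. Recall that the map $\hbar_\bfa\mapsto u_\bfa$ is an isomorphism from the additive group $\overline{M}$ onto $R$ sending the submodule $M(i)$ to $U(i)$, and this isomorphism is $S_0$-equivariant with respect to the conjugation action of $S_0$ on $R$ and the scalar action of $\bbF_{r^m}^\times$ on $\overline{M}$ (by the computation $\ell_\omega.u_\bfa=u_{\bfa'}$ with $\hbar_{\bfa'}(X)=\omega^{q^{s-1}}\hbar_\bfa(\omega X)$ recorded just before Corollary~\ref{Cor_UiIrr}).

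Given this, part~(a) is immediate from \eqref{eqn:Xia-20} since $|U(i)|=q^{\dim_{\bbF_q}M(i)}$, and part~(b) is immediate from Lemma~\ref{lem_MiIrr} since the $S_0$-character of $U(i)$ coincides with the $\bbF_{r^m}^\times$-character $\chi_i$ of $M(i)$. For part~(c), Lemma~\ref{lem_MiIrr} tells us that each $M(i)$ is irreducible and that the $M(i)$'s are pairwise non-isomorphic; combined with the direct-sum decomposition \eqref{eqn_overM} of $\overline{M}$, this means that every irreducible constituent of $\overline{M}$ appears with multiplicity exactly one, so by Schur's lemma each minimal submodule must coincide with one of the $M(i)$'s. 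Transporting through the identification gives the claim about $\mathcal{U}$.

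For part~(d), the key input is Lemma~\ref{lem_MiIrrNd0}: under~\eqref{eqn_exception}, each $M(i)$ remains an irreducible $N$-module and they remain pairwise non-isomorphic. Since $|N|$ divides $r^m-1$ and is thus coprime to $p=\mathrm{char}(\bbF_{r^m})$, Maschke's theorem applies to $N$-submodules of $\overline{M}$; hence any $N$-submodule decomposes as a direct sum of irreducible $N$-submodules, which by the multiplicity-one observation must each equal some $M(i)$. Translating back yields the stated decomposition of any $N$-invariant subgroup of $R$.

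Since the corollary is flagged as ``an immediate corollary,'' there is no serious obstacle; the only point meriting care is part~(c), where one must notice that ``pairwise non-isomorphic'' upgrades the direct-sum decomposition into a classification of \emph{all} minimal submodules (not merely those listed). The other parts are bookkeeping once the $S_0$-equivariance of the identification $R\cong\overline{M}$ is invoked.
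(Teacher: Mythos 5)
Your proposal is correct and follows exactly the route the paper intends: the paper gives no separate proof, simply declaring the corollary an immediate consequence of~\eqref{eqn:Xia-20}, Lemmas~\ref{lem_MiIrr} and~\ref{lem_MiIrrNd0}, and the $S_0$-equivariant identification of $R$ with $\overline{M}$ set up in Subsection~\ref{subsec_form}. Your added care in part~(c) (multiplicity one plus Schur's lemma to capture \emph{all} minimal submodules) and the Maschke argument in part~(d) are precisely the routine steps the paper leaves implicit.
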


\begin{remark}\label{rem:Xia-3}
For $1\leqslant i\leqslant\lceil m/2\rceil$ in unitary case, the number $2i-1$ completely determines an irreducible $\bbF_qS_0$-module up to the action of ${\rm Gal}(\bbF_{q^{2m}}/\bbF_q)$. This is why we label this corresponding module as $U(i)$.
Similarly, for $1\leqslant i\leqslant\lfloor m/2\rfloor$ in orthogonal case or symplectic case, the number $i$ determines
an irreducible $\bbF_qS_0$-module up to the action of ${\rm Gal}(\bbF_{q^m}/\bbF_q)$, labelled as $U(i)$.
\end{remark}

\subsection{Submodules of unipotent radical}
\ \vspace{1mm}

For a subset $I$ of $\{0,1,\ldots,\lfloor(m+s-1)/2\rfloor\}$, we define
\begin{equation}\label{eqn_UIdef}
M(I)=\bigoplus_{i\in I}M(i)\ \text{ and }\ U(I)=\prod_{i\in I}U(i).
\end{equation}
Write $I\setminus\{0\}=\{i_1,\dots,i_k\}$ with $k\geqslant0$. Then define
\[
d(I)=
\begin{cases}
\gcd(2i_1-1,\ldots,2i_k-1,m)&\textup{if $s=2$},\\
\gcd(i_1,\ldots,i_k)&\textup{if $s=1$ and $m/2\in I$},\\
\gcd(i_1,\ldots,i_k,m)&\textup{if $s=1$ and $m/2\notin I$}.
\end{cases}
\]

\begin{lemma}\label{prop_UiElinear}
Let $I=\{n_1,\ldots,n_k\}$ be a nonempty subset of $\{1,\ldots,\lfloor(m+s-1)/2\rfloor\}$, let $d=d(I)$, and for $b\in\bbF_{r^{d}}$ let $\tau_b$ be the linear transformation on $V$ such that $\tau_b((x,y))=(b^{r^{n_1}}x,by)$. Then $F:=\{\tau_b\mid b\in\bbF_{r^{d}}\}$ is isomorphic to $\bbF_{r^d}$, and each element of $U(I){:}S_0$ is $F$-linear. In particular, $U(I){:}S_0$ is contained in a field extension subgroup of $\GL(V)$ defined over $\bbF_{r^d}$.
\end{lemma}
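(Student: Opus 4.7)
The plan is to check that $F$ is a field isomorphic to $\bbF_{r^d}$, reduce the $F$-linearity of every element of $U(I){:}S_0$ to a single polynomial identity on $\hbar_\bfa$, verify that identity via the definition of $d(I)$, and read off the ``in particular'' clause as the commutant of $F$ in $\GL(V)$.

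For the first step, $b\mapsto\tau_b$ is plainly bijective; multiplicativity $\tau_b\tau_{b'}=\tau_{bb'}$ is a direct computation, while additivity $\tau_b+\tau_{b'}=\tau_{b+b'}$ uses $(b+b')^{r^{n_1}}=b^{r^{n_1}}+(b')^{r^{n_1}}$ in characteristic $p$. So $F$ is a subfield of $\mathrm{End}_{\bbF_r}(V)$ isomorphic to $\bbF_{r^d}$. For the second step, it suffices to verify that each generator of $U(I){:}S_0$ centralizes $F$. The generator $\ell_\omega$ acts diagonally on the $(x,y)$-coordinates and commutes with every $\tau_b$ by commutativity of $\bbF_{r^m}$. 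For $u_\bfa$ with $\hbar_\bfa\in M(I)$, computing $\tau_bu_\bfa=u_\bfa\tau_b$ on $(x,y)$ reduces to
\[
\hbar_\bfa(by)=b^{r^{n_1}}\hbar_\bfa(y)\quad\text{for all }b\in\bbF_{r^d},\ y\in\bbF_{r^m}.
\]
Writing $\hbar_\bfa=\sum_{j=0}^{m-1}a_jX^{r^j}$ and using $\bbF_{r^m}$-linear independence of the Frobenius maps $y\mapsto y^{r^j}$ for $0\leqslant j\leqslant m-1$, this identity becomes the requirement that $a_j\neq 0$ forces $r^d-1\mid r^j-r^{n_1}$, i.e.\ $j\equiv n_1\pmod{d}$.

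The third step, checking the congruence, is case-by-case. For each summand $M(n_t)\subseteq M(I)$ the indices $j$ with $a_j\neq 0$ are $n_t$ together with $m-n_t$ when $s=1$ or with $(m-n_t+1)\bmod m$ when $s=2$, plus a single index from the half-dimensional summand when $m+s$ is odd. For $s=1$, the definition of $d(I)$ gives $d\mid n_t$ for every $n_t\in I$ and $d\mid m$ (either because $m$ sits in the $\gcd$ or because $d\mid m/2\in I$), so every relevant $j$ is $\equiv 0\equiv n_1\pmod{d}$. For $s=2$, $d$ is odd since it divides $2n_1-1$, so $d\mid 2(n_t-n_1)$ upgrades to $d\mid n_t-n_1$; combining $d\mid m$ and $d\mid 2n_1-1$ yields $d\mid m+1-2n_1$, hence $d\mid m+1-n_t-n_1$; and the wrap-around at $n_t=1$ is benign because $1\in I$ already forces $d\mid 2\cdot 1-1=1$.

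Finally, $d\mid m$ in all cases, so $F\cong\bbF_{r^d}$ makes $V$ into an $\bbF_{r^d}$-vector space of dimension $2m/d$, and $\Cen_{\GL(V)}(F)=\GL_{2m/d}(\bbF_{r^d})$ is the field extension subgroup containing $U(I){:}S_0$. The main obstacle is the combinatorial step 3: one has to match up the $r$-exponents produced by reducing $\hbar_\bfa$ modulo $X^{r^m}-X$ with the three-case definition of $d(I)$, and it is the odd-$d$ observation in the unitary case that makes the arithmetic close up.
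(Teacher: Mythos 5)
Your proposal is correct and follows essentially the same route as the paper's proof: verify that each $\tau_b$ centralizes $S_0$ (trivial) and each generator $u_\bfa$ of $U(I)$. The one cosmetic difference is that the paper expands $\tau_b\circ u_\bfa$ and $u_\bfa\circ\tau_b$ directly and collapses them via the observation $b^{q^{2n_1}}=b^{q^{2n_i}}=b^{q^{2m-2n_i+2}}$ (its equation ``eqn\_bpower''), treating only $s=2$ explicitly, whereas you first isolate the single identity $\hbar_\bfa(by)=b^{r^{n_1}}\hbar_\bfa(y)$, convert it via linear independence of Frobenius maps into the congruence $j\equiv n_1\pmod d$ on monomial exponents, and then verify that congruence uniformly for $s=1$ and $s=2$ from the definition of $d(I)$ (using that $d$ is odd in the unitary case); this is the same arithmetic, just packaged a bit more transparently, and you additionally spell out the isomorphism $F\cong\bbF_{r^d}$, which the paper leaves implicit.
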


\begin{proof}
We only prove the case $s=2$ here, as the case $s=1$ is similar. Suppose that $s=2$. It suffices to show that for each $b\in\bbF_{q^{2d}}^\times$, the element $\tau_b$ commutes with each element of $S_0$ and each element of $U(I)$. This is trivial for $S_0$, and next we show that $\tau_b$ commutes with $u_{\bfa}\in U(n_i)$ for each $i$.
Observe from the definition of $d=d(I)$ that
\begin{equation}\label{eqn_bpower}
b^{q^{2n_1}}=b^{q^{2n_i}}=b^{q^{2m-2n_i+2}}=b^{q^{m+1}}
\end{equation}
for $b\in\bbF_{r^d}$ and $1\leqslant i\leqslant k$.

First assume that $n_i\neq(m+1)/2$, and take $u_\bfa\in U(n_i)$ such that
\[
\hbar_{\bfa}(X)=aX^{q^{2n_i}}-a^{q^{2m-2n_i+1}}X^{q^{2m-2n_i+2}}
\]
for some $a\in\bbF_{q^{2m}}$. Then for $(x,y)\in V$ we have
\begin{align*}
(\tau_b\circ u_\bfa)((x,y))&=(b^{q^{2n_1}}x+b^{q^{2n_1}}ay^{q^{2n_i}}-b^{q^{2n_1}}a^{q^{2m-2n_i+1}}y^{q^{2m-2n_i+2}},by)\\
&=(b^{q^{2n_1}}x+ab^{q^{2n_i}}y^{q^{2n_i}}-b^{q^{2m-2n_i+2}}a^{q^{2m-2n_i+1}}y^{q^{2m-2n_i+2}},by)=(u_\bfa\circ\tau_b)((x,y)).
\end{align*}
(Here we used \eqref{eqn_bpower} in the second equality).

Next assume that $m$ is odd and $n_i=(m+1)/2$. Take $u_\bfa\in U((m+1)/2)$ such that $\hbar_{\bfa}(X)=aX^{q^{m+1}}$ for some $a\in\bbF_{q^{2m}}$ with $a+a^{q^m}=0$. For $(x,y)\in V$, we deduce from \eqref{eqn_bpower} that
\begin{align*}
(\tau_b\circ u_\bfa)((x,y))&=(b^{q^{2n_1}}x+b^{q^{2n_1}}ay^{q^{m+1}},by)\\
&=(b^{q^{2n_1}}x+ab^{q^{m+1}}y^{q^{m+1}},by)=(u_\bfa\circ\tau_b)((x,y)).
\end{align*}
The proof is now complete.
\end{proof}

The following proposition gives necessary conditions for the solvable factor $H$ in Hypothesis~\ref{hypo-1}\,\eqref{hypo-1ii}--\eqref{hypo-1iv}.

\begin{proposition}\label{thm:dValue}
Let $L$, $G$, $B$, $H$, $P$ and $S$ be as in Proposition~$\ref{Xia:Unitary01}$,~$\ref{Xia:Omega01}$ or~$\ref{Xia:Symplectic01}$, and let $P=U(I)$ with $I\subseteq\{0,1,\ldots,\lfloor(m+s-1)/2\rfloor\}$ such that $0\in I$ only if $L=\Sp_{2m}(q)$. If $G=HB$, then one of the following holds:
\begin{enumerate}[{\rm(a)}]
\item\label{thm:dValuea} $L=\SU_{2m}(q)$, and $d(I)=1$;
\item\label{thm:dValueb} $L=\Omega_{2m}^+(q)$, and either $d(I)=1$, or $d(I)=2$ with $q\in\{2,4\}$;
\item\label{thm:dValuec} $L=\Sp_{2m}(q)$, and either $0\in I$, or $0\notin I$ and $d(I)\in\{1,2\}$ with $q\in\{2,4\}$.
\end{enumerate}
\end{proposition}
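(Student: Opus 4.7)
The plan is to exploit Lemma~\ref{prop_UiElinear} to force $H$ into a proper Aschbacher $\calC_3$ (field-extension) subgroup of $G$ whenever the relevant $d$-value exceeds $1$, and then to invoke the Liebeck--Praeger--Saxl classification \cite{LPS1990} of maximal factorizations of classical groups to constrain $d$ and $q$. Throughout, set $d := d(I)$ and suppose the interesting case $d > 1$, with $I \setminus \{0\}$ nonempty (so that $d$ is meaningfully defined by a nontrivial gcd).

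First I would argue that $H$ is $\bbF_{r^d}$-semilinear. Lemma~\ref{prop_UiElinear} shows that $U(I \setminus \{0\}){:}S_0$ centralizes the field $F \cong \bbF_{r^d}$ of scalar maps $\tau_b$. When $0 \in I$ (possible only for $L = \Sp_{2m}(q)$), a short direct computation extends this to $U(0)$: for $u_\bfa \in U(0)$ and $b \in \bbF_{r^d}$ we have $\tau_b \circ u_\bfa = u_\bfa \circ \tau_b$ because $d \mid n_i$ for each $n_i \in I \setminus \{0\}$ forces $b^{r^{n_1}} = b$. Hence $U(I){:}S_0$ is $\bbF_{r^d}$-linear. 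Since $S \leqslant T = \GaL_1(r^m)$ and the Galois automorphisms of $\bbF_{r^m}/\bbF_p$ preserve the subfield $\bbF_{r^d}$, the extension $S/(S \cap S_0)$ acts $\bbF_{r^d}$-semilinearly on $V$; combined with preservation of $\kappa_\epsilon$, this places $H$ in the normalizer $A$ in $G$ of a $\calC_3$-subgroup whose socle lies in $\{\SU_{2m/d}(q^d),\ \Omega_{2m/d}^{\pm}(q^d),\ \Sp_{2m/d}(q^d)\}$ for the three respective types.

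From $G = HB$ one deduces $G = AB$ with $A$ of $\calC_3$-type, at which point I would consult \cite[Theorem~A]{LPS1990}. For $L = \SU_{2m}(q)$ and $B = \N_1[G]$, no factorization with a $\calC_3$-type factor exists, forcing $d = 1$ and giving case~\eqref{thm:dValuea}. For $L = \Omega_{2m}^+(q)$ and $B = \N_1[G]$, the only admissible entry in the LPS tables has $A \cap L = \Omega_m^+(q^2).2^2$ with $q \in \{2,4\}$, yielding case~\eqref{thm:dValueb}. For $L = \Sp_{2m}(q)$ with $0 \notin I$, the key observation is that $P = U(I) \leqslant U(\{1,\ldots,\lfloor m/2\rfloor\})$, a subgroup of order $q^{m(m-1)/2}$ that coincides with the unipotent radical of $\Pa_m[\mathrm{O}_{2m}^+(q)]$; hence $H$ already lies inside the $\max^-$ subgroup of $G$ of type $\mathrm{O}_{2m}^+(q).\mathcal{O}$, reducing the problem to the orthogonal case just treated and yielding case~\eqref{thm:dValuec}. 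When $0 \in I$, no $d$-constraint is imposed, which is consistent with the known existence of the relevant $\calC_3$-factorizations of $\Sp_{2m}(q)$.

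The main obstacle is to identify precisely the $\calC_3$-type of the subgroup $A$ in each case and match it with the correct entry of the LPS tables. In particular, in the orthogonal setting one must determine the sign $\pm$ of the subfield subgroup $\Omega_{2m/d}^{\pm}(q^d)$, since only the $\Omega^+$ type gives rise to a valid factorization at $q \in \{2,4\}$ and the $\Omega^-$ type must be excluded by a parity/discriminant consideration. A secondary technicality is verifying that the semilinear part of $S$ is genuinely absorbed into the normalizer of the $\calC_3$-subgroup (rather than merely centralizing $F$); this follows from the compatibility of Frobenius with subfield inclusion.
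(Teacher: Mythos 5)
Your overall strategy is the same as the paper's: when the relevant gcd exceeds $1$, use Lemma~\ref{prop_UiElinear} to place $H$ inside a field-extension subgroup $A$ and then rule out (unitary) or constrain (orthogonal) the factorization $G=AB$ via \cite[Theorem~A]{LPS1990}; and in the symplectic case with $0\notin I$, observe that $U(I)$ lies in the unipotent radical of $\Pa_m$ of the $\max^-$ subgroup $A$ with $A\cap L=\mathrm{O}_{2m}^+(q)$ and reduce to the orthogonal case. Your additional remarks (extending the commutation to $U(0)$, absorbing the semilinear part of $S$ into the normalizer, identifying the sign of the subfield group) are either unnecessary for this statement --- case~(c) imposes no condition at all when $0\in I$ --- or are details the paper defers to the proofs of Theorems~\ref{thm:Xia-3} and~\ref{thm:Xia-4}; they are correct as far as they go.

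There is, however, one genuine gap in your symplectic case. Statement~(c) asserts $q\in\{2,4\}$ whenever $0\notin I$, including when $d(I)=1$, and this does not follow from ``reducing the problem to the orthogonal case just treated'': part~(b) only forces $q\in\{2,4\}$ when $d(I)=2$, so your argument as written yields a strictly weaker conclusion. The missing step is the one the paper makes explicit: since $H<A$ and $G=HB$, one first deduces $G=AB$ with $A\cap L=\mathrm{O}_{2m}^+(q)$ and $B\cap L=\mathrm{O}_{2m}^-(q)$, and this maximal factorization exists only for $q\in\{2,4\}$ by \cite[Theorem~A]{LPS1990}; note that this $A$ is of $\calC_8$, not $\calC_3$, type, so your earlier sentence about consulting the LPS tables for a $\calC_3$ factor does not cover it. Only after that does the induced factorization $A=H(A\cap B)$, which is of the type handled in part~(b), give $d(I)\in\{1,2\}$ (with the $d(I)=2$ subcase again requiring $q\in\{2,4\}$, now automatically satisfied). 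With this step inserted, your proof coincides with the paper's.
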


\begin{proof}
First assume that $\epsilon=-1/2$, that is, $G_0=\GU_{2m}(q)$. Suppose that $d(I)=d>1$. Then Lemma~\ref{prop_UiElinear} implies that $H$ is contained in a field extension subgroup $A$ of type $\GU_{2m/d}(q^d)$ in $G$. From~\cite[Theorem~A]{LPS1990} we see that $G\neq AB$ and thus $G\neq HB$, proving part~\eqref{thm:dValuea}.

In the case $\epsilon=-1$, the same argument as above leads to part~\eqref{thm:dValueb}.

Now assume $\epsilon=0$. Then $G_0=\Sp_{2m}(q)$, and there is a maximal subgroup $A$ of $G$ such that $A\cap G_0=\mathrm{O}_{2m}^+(q)$ and $\bfO_2(\Pa_m[G])=U(0)\times\bfO_2(\Pa_m[A])$. If $0\notin I$, then $U(I)\leqslant\Pa_m[A]$ and so $H<A$. In this case, we deduce from $G=HB$ that $G=AB$, which forces $q\in\{2,4\}$ by~\cite[Theorem~A]{LPS1990}. Moreover, this implies $A=H(A\cap B)$, which leads to $d(I)\in\{1,2\}$ by the conclusion in part~\eqref{thm:dValueb}. Thus part~\eqref{thm:dValuec} holds.
\end{proof}

\subsection{Orbits of $U(I)$ on $[G:B]$}\label{subsec_UIorb}
\ \vspace{1mm}

Let $I$ be a subset of $\{0,1,\ldots,\lfloor(m+s-1)/2\rfloor\}$. In order to apply Propositions~\ref{Xia:Unitary01},~\ref{Xia:Omega01} and~\ref{Xia:Symplectic01}, we need to calculate the orbit length of $U(I)$ on $[G:B]$. Define a mapping $\kappa$ from $V=\bbF_{r^m}\times\bbF_{r^m}$ to $\bbF_q$ by letting
\[
\nu(x,y)=\Tr_{r^m/q}(xy^{q^{s-1}})\,\text{ for }(x,y)\in V.
\]
This is exactly $\kappa_\epsilon$ if $\epsilon=-1$, and is the norm on $(V,\kappa_\epsilon)$ if $\epsilon=-1/2$.
Hence we may identify the coset space $[G:B]$ with the set
\begin{equation*}
\Lambda_\epsilon=\{\langle(x,y)\rangle\mid (x,y)\in V \textup{ with }\nu(x,y)=1\}\,\text{ if }\epsilon\in\{-1/2,-1\}.
\end{equation*}
For $\epsilon=0$, the vector space $V$ is equipped with the alternating form $\kappa_0$. In this case, we let
\[
\mbox{$\Lambda_0=\{$elliptic quadrics on $V$ whose associated bilinear form is $\kappa_0\}$.}
\]
The symplectic group $\Sp_{2m}(q)$ is transitive on $\Lambda_0$ with stabilizer $\rmO_{2m}^-(q)$.
Thus $[G:B]$ can be identified with $\Lambda_\epsilon$ for all $\epsilon\in\{-1,-1/2,0\}$.
In this subsection we determine the orbits of $U(I)$ on $\Lambda_\epsilon$ and thus on $[G:B]$ (see Proposition~\ref{thm_UIorbit}).

Define for each $a,b\in\bbF_{q^m}$ a mapping $\kappa_{a,b}$ as follows.
\begin{equation*}
\kappa_{a,b}:V\rightarrow\bbF_q,\ \ (x,y)\mapsto\Tr_{q^m/q}(ax^2+xy+by^2).
\end{equation*}
The subsequent lemma explicitly describes $\Lambda_0$ for us to work with in this subsection.

\begin{lemma}\label{lem_EllipKcd}
For $\epsilon=0$, we have ${\Lambda_\epsilon}=\{\kappa_{a,b}\mid\Tr_{q^m/2}(ab)=1\}$.
\end{lemma}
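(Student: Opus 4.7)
The strategy has two parts: first show that the $q^{2m}$ forms $\kappa_{a,b}$ exhaust the affine space $\mc{Q}$ of all $\bbF_q$-quadratic forms on $V$ whose associated bilinear form equals $\kappa_0$; second, determine which $\kappa_{a,b}$ are of minus (elliptic) type by computing the Arf invariant. Combined, these two facts identify $\Lambda_0$ with $\{\kappa_{a,b}\mid\Tr_{q^m/2}(ab)=1\}$.

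For the first part, a direct expansion in characteristic $2$ shows that each $\kappa_{a,b}$ has associated bilinear form $\kappa_0$: the cross term produced by $\kappa_{a,b}(v_1+v_2)-\kappa_{a,b}(v_1)-\kappa_{a,b}(v_2)$ is exactly $\Tr_{q^m/q}(x_1y_2+x_2y_1)=\kappa_0(v_1,v_2)$. Injectivity of $(a,b)\mapsto\kappa_{a,b}$ follows by setting $y=0$: one obtains $\Tr_{q^m/q}((a-a')x^2)=0$ for every $x\in\bbF_{q^m}$, and since $x\mapsto x^2$ permutes $\bbF_{q^m}$ and the trace form is nondegenerate, $a=a'$; symmetrically $b=b'$. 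Finally, $\mc{Q}$ is a torsor for the $\bbF_q$-vector space of additive maps $f\colon V\to\bbF_q$ satisfying $f(\lambda v)=\lambda^2 f(v)$, i.e.\ squares of linear functionals, whose $\bbF_q$-dimension is $2m$; hence $|\mc{Q}|=q^{2m}=|\bbF_{q^m}|^2$ and the injection is a bijection onto $\mc{Q}$.

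For the second part, I build a symplectic basis of $(V,\kappa_0)$ from an $\bbF_q$-basis $\alpha_1,\dots,\alpha_m$ of $\bbF_{q^m}$ and its trace-dual basis $\beta_1,\dots,\beta_m$ (so $\Tr_{q^m/q}(\alpha_i\beta_j)=\delta_{ij}$), taking $e_i=(\alpha_i,0)$ and $f_i=(0,\beta_i)$. Then
\[
\mathrm{Arf}(\kappa_{a,b})\equiv\sum_{i=1}^m\Tr_{q^m/q}(a\alpha_i^2)\,\Tr_{q^m/q}(b\beta_i^2)=\sum_{j,k=0}^{m-1}a^{q^j}b^{q^k}\Bigl(\sum_{i=1}^m\alpha_i^{q^j}\beta_i^{q^k}\Bigr)^{\!\!2}\pmod{\wp(\bbF_q)},
\]
where $\wp(x)=x^2+x$. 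Invoking the classical dual-basis identity $\sum_i\alpha_i^{q^j}\beta_i^{q^k}=\delta_{j,k}$ for $0\le j,k\le m-1$ collapses this double sum to $\sum_j(ab)^{q^j}=\Tr_{q^m/q}(ab)$. Since $\kappa_{a,b}$ is of minus type exactly when $\Tr_{q/2}(\mathrm{Arf}(\kappa_{a,b}))=1$, the tower identity $\Tr_{q/2}\circ\Tr_{q^m/q}=\Tr_{q^m/2}$ yields the stated criterion.

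The main obstacle is the dual-basis identity $\sum_i\alpha_i^{q^j}\beta_i^{q^k}=\delta_{j,k}$, which is classical but requires justification. The cleanest route is to observe that the Casimir tensor $\sum_i\alpha_i\otimes\beta_i\in\bbF_{q^m}\otimes_{\bbF_q}\bbF_{q^m}$ is basis-independent, and that under the Galois/CRT isomorphism $\bbF_{q^m}\otimes_{\bbF_q}\bbF_{q^m}\xrightarrow{\sim}\bbF_{q^m}^m$, $a\otimes b\mapsto(a\cdot b^{q^j})_{0\le j<m}$, it corresponds to $(1,0,\dots,0)$; reading off the components recovers the required identity. Everything else reduces to routine bookkeeping.
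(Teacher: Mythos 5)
Your proof is correct, but it takes a genuinely different route from the paper's, mainly in how the type of $\kappa_{a,b}$ is determined. For the first step the paper argues directly: given $\kappa\in\Lambda_0$ it sets $\kappa'=\kappa+\nu$, notes that $\kappa'$ has zero polarization, hence is additive and Frobenius-semilinear, and so equals $(x,y)\mapsto\Tr_{q^m/q}(ax^2+by^2)$; you instead verify that the $q^{2m}$ forms $\kappa_{a,b}$ are pairwise distinct and polarize to $\kappa_0$, and then count the torsor of quadratic forms with polarization $\kappa_0$ to conclude they exhaust it --- both arguments are sound and essentially of the same depth. The real divergence is in the second step: the paper views $V$ as a $2$-dimensional $\bbF_{q^m}$-space, writes $\kappa_{a,b}=\Tr_{q^m/q}\circ\kappa_\sharp$ with $\kappa_\sharp(x,y)=ax^2+xy+by^2$, and quotes~\cite[Table~4.3.A]{KL1990} (type is preserved under composing with the trace) together with the irreducibility criterion~\cite[Corollary~3.79]{LN97}; you compute the Arf invariant directly in the symplectic basis $e_i=(\alpha_i,0)$, $f_i=(0,\beta_i)$ built from trace-dual bases, and the identity $\sum_i\alpha_i^{q^j}\beta_i^{q^k}=\delta_{jk}$ (which you justify correctly, and which indeed reduces to the matrix statement that the Moore-type matrices of dual bases are mutually inverse) collapses the double sum to $\Tr_{q^m/q}(ab)$, after which the classical criterion ``minus type $\iff$ $\Tr_{q/2}(\mathrm{Arf})=1$'' and transitivity of the trace finish the argument. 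Your route avoids the field-reduction table from~\cite{KL1990} at the cost of invoking the Arf-invariant classification (which, like the paper's inputs, should carry a reference); the paper's route is shorter given its citations, while yours is more explicit and self-contained apart from that one classical fact. No gaps.
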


\begin{proof}
Recall that $r=q$ is even, as $\epsilon=0$. Take an element $\kappa\in{\Lambda_0}$, and set $\kappa'=\kappa+\nu$. The associated bilinear form of $\kappa'$ is $\kappa_0+\kappa_0=0$, and so $\kappa'$ is additive. Moreover, since $\kappa'$ is a quadratic form, we have $\kappa'(cx,cy)=c^2\kappa'(x,y)$ for all $c\in\bbF_q$ and $(x,y)\in V$. It follows that $(x,y)\mapsto(\kappa'(x,y))^{q/2}$ is an $\bbF_q$-linear mapping from $\bbF_{q^m}$ to $\bbF_q$, and so $\kappa'(x,y)=\Tr_{q^m/q}(ax^2+by^2)$ for some $a,b\in\bbF_{q^m}$. Accordingly, $\kappa=\kappa'+\nu=\kappa_{a,b}$.

Next, we deduce the necessary and sufficient condition $\Tr_{q^m/2}(ab)=1$ for $\kappa_{a,b}$ to be an elliptic form. The vector space $V$ can be viewed as a $2$-dimensional vector space over $\bbF_{q^m}$, and we denote it by $V_\sharp$. Consider the quadratic form $\kappa_\sharp$ on $V_\sharp$ defined by $\kappa_\sharp(x,y)=ax^2+xy+by^2$ for $(x,y)\in V_\sharp$. Thus $\kappa_{a,b}=\Tr_{q^m/q}\circ \kappa_\sharp$. By~\cite[Table~4.3.A]{KL1990}, the form $\kappa_{a,b}$ is elliptic if and only if $\kappa_\sharp$ is elliptic. The latter holds exactly when the polynomial $aX^2+XY+bY^2$ is irreducible over $\bbF_{q^m}$. Moreover, by \cite[Corollary~3.79]{LN97}, $aX^2+X+b$ is irreducible over $\bbF_{q^m}$ if and only if $\Tr_{q^m/2}(ab)=1$. This completes the proof.
\end{proof}

We will need the following elementary arithmetic result in the proof of Lemma~\ref{lem_key}.

\begin{lemma}\label{lem_gcd}
Let $a\geqslant2$ be an integer, and let $n_1,\ldots,n_k$ be positive integers with $k\geqslant1$. Then
\[
\gcd(a^{n_1}-1,a^{n_2}-1,\ldots,a^{n_k}-1)=a^{\gcd(n_1,\ldots,n_k)}-1.
\]
\end{lemma}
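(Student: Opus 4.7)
The plan is to establish the result by induction on $k$, with the crux being the classical two-variable identity $\gcd(a^m-1,a^n-1)=a^{\gcd(m,n)}-1$.

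For the base case $k=1$ there is nothing to prove. For $k=2$, I would argue by strong induction on $m+n$ (assuming without loss of generality that $m\geqslant n$). If $m=n$, the claim is trivial. Otherwise write
\[
a^m-1=a^{m-n}(a^n-1)+(a^{m-n}-1),
\]
which shows that $\gcd(a^m-1,a^n-1)=\gcd(a^{m-n}-1,a^n-1)$. Since $\gcd(m-n,n)=\gcd(m,n)$ and $(m-n)+n<m+n$, the induction hypothesis yields $\gcd(a^m-1,a^n-1)=a^{\gcd(m,n)}-1$. This is essentially mirroring the Euclidean algorithm on the exponents.

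For the inductive step from $k-1$ to $k$ (with $k\geqslant 3$), I would use the associativity of $\gcd$ to write
\[
\gcd(a^{n_1}-1,\ldots,a^{n_k}-1)=\gcd\bigl(\gcd(a^{n_1}-1,\ldots,a^{n_{k-1}}-1),\,a^{n_k}-1\bigr).
\]
Applying the induction hypothesis to the inner gcd gives $a^{\gcd(n_1,\ldots,n_{k-1})}-1$, and then the two-variable case handles the outer gcd, producing $a^{\gcd(\gcd(n_1,\ldots,n_{k-1}),n_k)}-1=a^{\gcd(n_1,\ldots,n_k)}-1$, as desired.

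The only nontrivial step is the two-variable case, and even there the obstacle is merely keeping track of the Euclidean reduction; the identity $a^m-1=a^{m-n}(a^n-1)+(a^{m-n}-1)$ does all the work. No further machinery is needed, so this should be a short, self-contained argument.
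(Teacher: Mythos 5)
Your proof is correct, and the reduction of the general case to $k=2$ by induction on $k$ is exactly what the paper does. Where you differ is in the two-variable case: you run the Euclidean algorithm on the exponents, using the identity $a^m-1=a^{m-n}(a^n-1)+(a^{m-n}-1)$ and strong induction on $m+n$, whereas the paper invokes B\'ezout, writing $d=\gcd(n_1,n_2)=bn_1+cn_2$ and arguing that $a^d-1$ divides $D=\gcd(a^{n_1}-1,a^{n_2}-1)$ on one hand, while on the other hand $a^{n_1}\equiv a^{n_2}\equiv1\pmod{D}$ forces $a^d\equiv(a^{n_1})^b(a^{n_2})^c\equiv1\pmod{D}$, so $D$ divides $a^d-1$. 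The paper's argument is shorter but has the minor subtlety that $b$ or $c$ may be negative, so the congruence really takes place in the group of units modulo $D$ (harmless, since $a^{n_i}\equiv1$ is invertible); your Euclidean descent avoids that subtlety at the cost of an extra induction, and is arguably the more elementary route. Either way the content is the same standard fact, and both proofs are complete.
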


\begin{proof}
It suffices to prove the lemma for $k=2$, since the general case follows by induction on $k$. Suppose that $k=2$, and write $d=\gcd(n_1,n_2)$ and $D=\gcd(a^{n_1}-1,a^{n_2}-1)$. Then $d=bn_1+cn_2$ for some integers $b$ and $c$, and we have $a^{n_1}\equiv a^{n_2}\equiv1\pmod{D}$. It is clear that $a^d-1$ divides both $a^{n_1}-1$ and $a^{n_2}-1$, and hence divides $D$. Conversely, it follows from
\[
a^d-1=a^{bn_1+cn_2}-1=(a^{n_1})^b(a^{n_2})^c-1\equiv1^b\cdot1^c-1=0\pmod{D}
\]
that $D$ divides $a^d-1$. This completes the proof.
\end{proof}

In the proof of the next lemma, we adopt the Iverson bracket notation $[\![P]\!]$ for a property $P$, which takes value $1$ or $0$ according as $P$ holds or not.

\begin{lemma}\label{lem_key}
Let $n_1,\ldots,n_k\in\{1,\ldots,m-1\}$ with $k\geqslant1$, and let $x\in\bbF_{r^m}^\times$.
\begin{enumerate}[{\rm(a)}]
\item\label{lem_keya} If $\mathcal{N}$ is the set of tuples $(a_1,\ldots,a_k)\in\bbF_{r^{m}}^k$ such that
\begin{equation*}
\sum_{i=1}^k\left(a_ix^{r^{n_i}}-a_i^{r^{m-n_i}q^{s-1}}x^{r^{m-n_i}q^{2s-2}}\right)=0,
\end{equation*}
then $|\mathcal{N}|=r^{m(k-1)}q^{\gcd(sn_1-s+1,\ldots,sn_k-s+1,m)}$.
\item\label{lem_keyb} If $m+s$ is odd and $\mathcal{N}$ is the set of $(a_0,a_1,\ldots,a_k)\in\bbF_{r^m}^{k+1}$ such that $\Tr_{r^m/r^{m/2}}(a_0)=0$ and
\begin{equation}\label{eqn_tt1}
a_0x^{r^{m/2}q^{s-1}}+\sum_{i=1}^k\left(a_ix^{r^{n_i}}-a_i^{r^{m-n_i}q^{s-1}}x^{r^{m-n_i}q^{2s-2}}\right)=0,
\end{equation}
then $|\mathcal{N}|=r^{m(2k-1)/2}q^{\gcd(sn_1-s+1,\ldots,sn_k-s+1,sm/2)}$.
\end{enumerate}
\end{lemma}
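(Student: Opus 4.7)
The plan is to count $|\mathcal{N}|$ by Fourier analysis on the additive group of $\bbF_{r^m}$, since in both parts $\mathcal{N}$ is the set of solutions of an $\bbF_q$-linear equation, intersected in~(b) with the $\bbF_q$-subspace $K:=\ker(\Tr_{r^m/r^{m/2}})$ in the $a_0$-coordinate. Fix a nontrivial additive character $\chi$ of $\bbF_q$; then $\sum_{a\in\bbF_{r^m}}\chi(\Tr_{r^m/q}(ua))$ equals $r^m$ if $u=0$ and $0$ otherwise. Using this to rewrite the indicator of the defining equation and swapping sums expresses
\[
|\mathcal{N}|=\frac{1}{r^m}\sum_{z\in\bbF_{r^m}}\prod_{i}S_i(z),
\]
where each $S_i(z)$ is an inner character sum over the corresponding $a_i$.

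For part~(a), the key step is to simplify $S_i(z)$ (for $i\geq 1$) by applying the Frobenius $u\mapsto u^{q^{sn_i-s+1}}$ inside the second trace, under which $\Tr_{r^m/q}$ is invariant. The exponent identity $s(m-n_i)+2s-2+(sn_i-s+1)\equiv s-1\pmod{sm}$ then transforms $\Tr_{r^m/q}(a_i^{r^{m-n_i}q^{s-1}}zx^{r^{m-n_i}q^{2s-2}})$ into $\Tr_{r^m/q}(a_iz^{q^{sn_i-s+1}}x^{q^{s-1}})$, so $S_i(z)=r^m$ exactly when $zx^{r^{n_i}}=z^{q^{sn_i-s+1}}x^{q^{s-1}}$, and otherwise $S_i(z)=0$. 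For $z\neq 0$ this reads $(zx^{-q^{s-1}})^{q^{sn_i-s+1}-1}=1$; since $\bbF_{r^m}^\times$ is cyclic of order $q^{sm}-1$, Lemma~\ref{lem_gcd} identifies the common solution set in $\bbF_{r^m}^\times$ as one of size $q^{\gcd(sn_1-s+1,\dots,sn_k-s+1,sm)}-1$, and using $\gcd(sn_i-s+1,s)=1$ allows $sm$ to be replaced by $m$ inside the $\gcd$. Adding the solution $z=0$ produces $q^d$ admissible $z$ with $d=\gcd(sn_1-s+1,\dots,sn_k-s+1,m)$, and the claimed $|\mathcal{N}|=r^{-m}\cdot r^{mk}\cdot q^d=r^{m(k-1)}q^d$ follows.

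Part~(b) treats $S_1,\dots,S_k$ identically. The new ingredient is $S_0(z):=\sum_{a_0\in K}\chi(\Tr_{r^m/q}(za_0x^{r^{m/2}q^{s-1}}))$; orthogonality on the subgroup $K$ gives $S_0(z)=|K|=r^{m/2}$ if $zx^{r^{m/2}q^{s-1}}\in K^\perp$ (the orthogonal complement under the pairing $\Tr_{r^m/q}$) and $0$ otherwise. A short duality computation---using that the adjoint of $\Tr_{r^m/r^{m/2}}$ with respect to the trace pairings is the inclusion $\bbF_{r^{m/2}}\hookrightarrow\bbF_{r^m}$---identifies $K^\perp=\bbF_{r^{m/2}}$, and the condition then becomes $(zx^{-q^{s-1}})^{r^{m/2}-1}=1$. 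Intersecting with the part~(a) conditions and applying Lemma~\ref{lem_gcd} once more produces $q^{d'}$ admissible $z$ with $d'=\gcd(sn_1-s+1,\dots,sn_k-s+1,sm/2)$ (the constraint $sm$ being absorbed, as $sm/2\mid sm$), giving $|\mathcal{N}|=r^{-m}\cdot r^{m/2}\cdot r^{mk}\cdot q^{d'}=r^{m(2k-1)/2}q^{d'}$.

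The main technical hurdle is the Frobenius bookkeeping when collapsing $S_i(z)$ to a single equation on $z$: the exponents mix $q$ and $r=q^s$ and must be tracked modulo $sm$, and the resulting essential quantity $sn_i-s+1$ is exactly what later enters the $\gcd$. The supplementary identification $K^\perp=\bbF_{r^{m/2}}$ merits a brief sanity check across characteristics, since $K$ itself equals $\bbF_{r^{m/2}}$ in characteristic~$2$ but meets $\bbF_{r^{m/2}}$ trivially otherwise; nevertheless, the duality conclusion $K^\perp=\bbF_{r^{m/2}}$ holds uniformly, making the argument for part~(b) a clean add-on to part~(a).
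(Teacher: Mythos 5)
Your proof is correct and follows essentially the same route as the paper: an additive character-sum (orthogonality) count, collapsing each inner sum via Frobenius-invariance of $\Tr_{r^m/q}$ to the conditions $w^{q^{sn_i-s+1}-1}=1$ with $w=zx^{-q^{s-1}}$, and then applying Lemma~\ref{lem_gcd}. The only difference is organizational: in part~(b) the paper first solves for $a_0$ and runs the dual sum over $b\in\bbF_{r^{m/2}}$, whereas you keep $a_0\in\ker\Tr_{r^m/r^{m/2}}$ and restrict the dual variable $z\in\bbF_{r^m}$ through the correctly identified annihilator $K^\perp=\bbF_{r^{m/2}}$, which yields the same count.
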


\begin{proof}
We only give details for part~\eqref{lem_keyb} here, since the proof for part~\eqref{lem_keya} is similar. Suppose that $m+s$ is odd and $\mathcal{N}$ is as defined in~\eqref{lem_keyb}. Let $d=\gcd(sn_1-s+1,\ldots,sn_k-s+1,sm/2)$. Recall that, for a positive integer $i$, the canonical additive character $\psi_{p^i}$ of $\bbF_{p^i}$ is defined by $\psi_{p^i}(a)=\zeta_p^{\Tr_{p^i/p}(a)}$ for $a\in\bbF_{p^i}$, where $\zeta_p=\exp(2\pi\sqrt{-1}/p)$. For $a\in\bbF_{p^i}$, according to~\cite[(5.9)]{LN97},
\[
\sum_{b\in\bbF_{p^i}}\psi_{p^i}(ab)=p^i\cdot[\![a=0]\!].
\]
Write $\psi=\psi_{r^m}$ for brevity. Then for each $a\in\bbF_{r^m}$,
\begin{equation}\label{eqn_tt0}
\sum_{b\in\bbF_{r^{m/2}}}\psi(ab)=\sum_{b\in\bbF_{r^{m/2}}}\psi_{r^{m/2}}(\Tr_{r^m/r^{m/2}}(a)\cdot b)=r^{m/2}\cdot[\![ \Tr_{r^m/r^{m/2}}(a)=0]\!].
\end{equation}
Take a tuple $(a_0,a_1,\ldots,a_k)$ in the set $\mathcal{N}$. We see from~\eqref{eqn_tt1} that $a_0$ is uniquely determined by $(a_1,\ldots,a_k)$, as $a_0=x^{-q^{s-1}(r^{m/2}+1)}f(a_1,\ldots,a_k)$ with
\begin{equation*}
f(a_1,\ldots,a_k)= \sum_{i=1}^k\left(-a_ix^{r^{n_i}+q^{s-1}}+a_i^{r^{m-n_i}q^{s-1}}x^{r^{m-n_i}q^{2s-2}+q^{s-1}}\right).
\end{equation*}
Since $x^{-q^{s-1}(r^{m/2}+1)}$ belongs to $\bbF_{r^{m/2}}^\times$, it follows that $\Tr_{r^m/r^{m/2}}$ takes value $0$ at $a_0$ if and only it takes $0$ at $f(a_1,\ldots,a_k)$. Hence $\mathcal{N}$ has the same size as the set of $(a_1,\ldots,a_k)\in\bbF_{r^m}^k$ such that $\Tr_{r^m/r^{m/2}}(f(a_1,\ldots,a_k))=0$. Then according to \eqref{eqn_tt0}, we have
\begin{align*}
r^{m/2}\cdot|\mathcal{N}|=\sum_{b\in\bbF_{r^{m/2}}}\;\sum_{a_1,\ldots,a_k\in\bbF_{r^m}}\psi\big((f(a_1,\ldots,a_k)\cdot b\big)
=r^{mk}+\sum_{b\in\bbF_{r^{m/2}}^\times}\prod_{i=1}^kS_i(b),
\end{align*}
where $S_i(b)=\sum_{a_i\in\bbF_{r^m}}\psi(-a_ix^{r^{n_i}+q^{s-1}}b+a_i^{r^{m-n_i}q^{s-1}}x^{r^{m-n_i}q^{2s-2}+q^{s-1}}b)$ for $1\leqslant i\leqslant k$ and $b\in\bbF_{r^{m/2}}^\times$.
By \cite[Theorem 2.23(v)]{LN97} one has $\psi(y)=\psi(y^p)$ for all $y\in\bbF_{r^m}$. Therefore, since
\[
\big(a_i^{r^{m-n_i}q^{s-1}}x^{r^{m-n_i}q^{2s-2}+q^{s-1}}b\big)^{r^{n_i-1}q}=a_i^{r^{m-1}q^s}x^{q^{s-1}+r^{n_i}}b^{r^{n_i-1}q},
\]
it follows that
\begin{align*}
S_i(b)&=\sum_{a_i\in\bbF_{r^m}}\psi\left(-a_ix^{r^{n_i}+q^{s-1}}b\right)
+\sum_{a_i\in\bbF_{r^m}}\psi\left(a_i^{r^{m-1}q^s}x^{r^{n_i}+q^{s-1}}b^{r^{n_i-1}q}\right)\\
&=\sum_{a_i\in\bbF_{r^m}}\psi\left(-a_ix^{r^{n_i}+q^{s-1}}b\right)
+\sum_{a_i\in\bbF_{r^m}}\psi\left(a_ix^{r^{n_i}+q^{s-1}}b^{r^{n_i-1}q}\right)\\
&=\sum_{a_i\in\bbF_{r^m}}\psi\left(a_ix^{r^{n_i}+q^{s-1}}(-b+b^{r^{n_i-1}q})\right)=r^m\cdot[\![b^{r^{n_i-1}q-1}=1]\!].
\end{align*}
This leads to $\prod_{i=1}^kS_i(b)=r^{mk}\cdot[\![b\in C]\!]$, where $C=\{c\in\bbF_{r^{m/2}}^\times\mid c^{D}=1\}$ with
\[
D=\gcd(r^{n_1-1}q-1,\ldots,r^{n_k-1}q-1,r^{m/2}-1).
\]
Note that $D=q^d-1$ by Lemma~\ref{lem_gcd}. Hence $C=\bbF_{q^d}^\times$, and thus
\[
r^{m/2}\cdot |\mathcal{N}|=r^{mk}+r^{mk}\sum_{b\in\bbF_{r^{m/2}}^\times}[\![b\in C]\!]=r^{mk}+r^{mk}|C|=r^{mk}+r^{mk}(q^d-1)=r^{mk}q^d,
\]
which gives $|\mathcal{N}|=r^{m(2k-1)/2}q^{\gcd(sn_1-s+1,\ldots,sn_k-s+1,sm/2)}$, as required.
\end{proof}



Recall the notation defined in~\eqref{eqn_UIdef}.

\begin{lemma}\label{prop_MI}
Let $I=\{n_1,\ldots,n_k\}$ be a nonempty subset of $\{1,\ldots,\lfloor(m+s-1)/2\rfloor\}$, let $d=d(I)$, let $\hbar(X)\in M(I)$, and let $x\in\bbF_{r^m}$. Then the following statements hold:
\begin{enumerate}[{\rm(a)}]
\item\label{prop_MIa} $\Tr_{r^m/q^d}\big(\hbar(x)x^{q^{s-1}}\big)=0$;
\item\label{prop_MIb} if $\epsilon=0$, then $\Tr_{r^m/q^d}\big(x\big(\hbar(x^{q^m/2})\big){}^2\big)=0$.
\end{enumerate}
\end{lemma}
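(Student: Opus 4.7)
The plan is to use $\bbF_q$-linearity of the trace and the direct-sum decomposition $M(I) = \bigoplus_{i \in I} M(n_i)$ to reduce both parts to checking vanishing of the trace separately on a single generator of each summand $M(n_i)$. If each such summand yields vanishing of $\Tr_{r^m/q^{e_i}}$ for some integer $e_i$, then $\Tr_{r^m/q^{d(I)}}$ vanishes whenever $d(I)$ divides every $e_i$, because $\Tr_{r^m/q^{d(I)}} = \Tr_{q^{e_i}/q^{d(I)}} \circ \Tr_{r^m/q^{e_i}}$.

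The workhorse identity is that $\Tr_{q^{sm}/q^e}(w - w^{q^t}) = 0$ whenever $e \mid \gcd(t, sm)$, since then $y \mapsto y^{q^t}$ fixes $\bbF_{q^e}$ pointwise and merely cyclically permutes the Galois conjugates that define the trace. This reduces the task to producing, for each generator, a pair $(w, t)$ expressing the relevant quantity as $w - w^{q^t}$, and checking $d(I) \mid \gcd(t, sm)$.

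For part (a) and a generic $1 \leq n_i \leq \lfloor(m+s)/2\rfloor - 1$, I would take the generator
\[
\hbar(X) = a X^{r^{n_i}} - a^{r^{m-n_i} q^{s-1}} X^{r^{m-n_i} q^{2s-2}}
\]
and compute that $\hbar(x) x^{q^{s-1}} = w - w^{q^t}$ with $w = a x^{r^{n_i} + q^{s-1}}$ and $t = s(m-n_i) + s - 1$; the required $\gcd(t, sm)$ then simplifies to $\gcd(sn_i - s + 1, m)$, which equals $\gcd(2n_i - 1, m)$ for $s=2$ and $\gcd(n_i, m)$ for $s=1$. For the special index $n_k = (m+s-1)/2$ when $m+s$ is odd, I would take $\hbar(X) = a X^{r^{m/2} q^{s-1}}$ with $a + a^{r^{m/2}} = 0$, so that $\hbar(x) x^{q^{s-1}} = a x^{q^{s-1}(r^{m/2}+1)}$, and use $(r^{m/2}+1) r^{m/2} \equiv r^{m/2} + 1 \pmod{r^m - 1}$ to see that $\Tr_{r^m/r^{m/2}}$ of this equals $(a + a^{r^{m/2}}) x^{q^{s-1}(r^{m/2}+1)} = 0$. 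A short unpacking of the three branches of the definition of $d(I)$ then confirms $d(I) \mid e_i$ in every case.

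For part (b), I would substitute $z = x^{q^m/2}$, so $z^2 = x$ in characteristic $2$; then $(A+B)^2 = A^2 + B^2$ gives $\hbar(z)^2 = \sum_i a_i^2 (z^2)^{r^i} = \sum_i a_i^2 x^{r^i}$, so that $x(\hbar(x^{q^m/2}))^2$ again has the shape $w - w^{q^{m-n_i}}$ for generic summands (with $w = a^2 x^{q^{n_i}+1}$) and reduces to $a^2 x^{q^{m/2}+1}$ for the special summand, whose $\Tr_{r^m/r^{m/2}}$ equals $(a + a^{q^{m/2}})^2 x^{q^{m/2}+1} = 0$. The divisibility check is identical to part (a). The main obstacle is not any single identity but rather keeping the two-case bookkeeping (generic versus special summand) consistent across the two values of $s$ and the three branches of $d(I)$; the definitions have been tuned exactly so that the special summand contributes the factor $sm/2$ into the relevant gcd precisely when it is present.
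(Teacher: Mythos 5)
Your proposal is correct and follows essentially the same route as the paper's proof: the reduction to the individual summands $M(n_i)$, the Frobenius-invariance identity $\Tr_{r^m/q^d}(w)=\Tr_{r^m/q^d}(w^{q^t})$ for $d\mid t$ applied with $w=ax^{r^{n_i}+q^{s-1}}$ and $t=sm-sn_i+s-1$, and the treatment of the special summand via $\Tr_{r^m/r^{m/2}}$ and $a+a^{r^{m/2}}=0$ are exactly the steps in the paper, with the same $\gcd$ bookkeeping ensuring $d\mid t$ and $d\mid sm/2$. Your explicit squaring computation for part~(b) (which the paper omits as ``essentially the same'') is also correct.
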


\begin{proof}
We only prove statement~\eqref{prop_MIa}, as the proof of statement~\eqref{prop_MIb} is essentially the same. Write $\Tr=\Tr_{r^m/q^d}$ for brevity. To prove~\eqref{prop_MIa}, it suffices to show that $\Tr(h(x)x^{q^{s-1}})=0$ for all $h$ in each component $M(n_i)$ of $M(I)$.

First assume that $n_i\neq(m+s-1)/2$. Then $h(x)=ax^{r^{n_i}}-a^{r^{m-n_i}q^{s-1}}x^{r^{m-n_i}q^{2s-2}}$ for some $a\in\bbF_{r^m}$.
Sine $d=d(I)$ divides both $sn_i-s+1$ and $m$, it divides $sm-sn_i+s-1$. Then as $r^{m-n_i}q^{s-1}=q^{sm-sn_i+s-1}$ and $\Tr=\Tr_{r^m/q^d}$, it follows that
\[
\Tr\big(ax^{r^{n_i}+q^{s-1}}\big)=\Tr\big(a^{r^{m-n_i}q^{s-1}}x^{(r^{n_i}+q^{s-1})r^{m-n_i}q^{s-1}}\big)
=\Tr\big(a^{r^{m-n_i}q^{s-1}}x^{q^{s-1}+r^{m-n_i}q^{2s-2}}\big).
\]
This shows $\Tr(h(x)x^{q^{s-1}})=0$, as desired.

Next assume that $n_i=(m+s-1)/2$. In this case, $m+s$ is odd, and $h(x)=ax^{r^{m/2}q^{s-1}}$ for some $a\in \bbF_{r^m}$ with $a+a^{r^{m/2}}=0$. Therefore, since $x^{r^{m/2}+1}\in\bbF_{r^{m/2}}$, we deduce that
\[
\Tr\big(h(x)x^{q^{s-1}}\big)=\Tr\big(ax^{(r^{m/2}+1)q^{s-1}}\big)=\Tr_{r^{m/2}/q^d}\big((a+a^{r^{m/2}})x^{(r^{m/2}+1)q^{s-1}}\big)=0.
\]
This completes the proof.
\end{proof}

Let $I$ be a nonempty subset of $\{0,1,\ldots,\lfloor(m+s-1)/2\rfloor\}$. We are now ready to determine the $U(I)$-orbits on $\Lambda_\epsilon$. To state the result, write $d=d(I)$ and define the following subsets of $\Lambda_\epsilon$:
\begin{itemize}
\item if $\epsilon\in\{-1,-1/2\}$, then for each $y\in\bbF_{r^m}^\times$ and each $c\in\bbF_{q^d}$ with $\Tr_{q^d/q}(c)=1$, define
\[
\calN_{d,y,c}=\{\la(x,y)\ra\mid \Tr_{r^m/q^d}(xy^{q^{s-1}})=c\};
\]
\item if $\epsilon=0$, then for each $x\in\bbF_{q^m}^\times$ and each $c\in\bbF_{q^d}$ with $\Tr_{q^d/2}(c)=1$, define
\[
\calE_{d,a,c}=\{\kappa_{a,b}\mid\Tr_{q^m/q^d}(ab)=c\}\ \text{ and }\ \calE_{d,a}=\{\kappa_{a,b}\mid\Tr_{q^m/2}(ab)=1\}.
\]
\end{itemize}
Note that $\calE_{d,a}$ is the disjoint union of $\calE_{d,a,c}$'s for $c\in\bbF_{q^d}$ with $\Tr_{q^d/2}(c)=1$.


\begin{proposition}\label{thm_UIorbit}
Let $I$ be a nonempty subset of $\{0,1,\ldots,\lfloor(m+s-1)/2\rfloor\}$ such that $0\in I$ only if $\epsilon=0$, and let $d=d(I)$. Then the following statements hold:
\begin{enumerate}[{\rm(a)}]
\item\label{thm_UIorbita}
If $\epsilon\in\{-1,-1/2\}$, then for each $\la(x,y)\ra\in\Lambda_\epsilon$, the $U(I)$-orbit on $\Lambda_\epsilon$ containing $\la(x,y)\ra$ is $\calN_{d,y,c}$ with $c=\Tr_{r^m/q^d}(xy^{q^{s-1}})$, and it has length $r^m/q^d$;
\item\label{thm_UIorbitb}
If $\epsilon=0\notin I$, then for each $\kappa_{a,b}\in{\Lambda_\epsilon}$, the $U(I)$-orbit on $\Lambda_\epsilon$ containing $\kappa_{a,b}$ is $\calE_{d,a,c}$ with $c=\Tr_{q^m/q^d}(ab)$, and it has length $q^m/q^d$;
\item\label{thm_UIorbitc}
If $\epsilon=0\in I$, then for each $\kappa_{a,b}\in{\Lambda_\epsilon}$, the $U(I)$-orbit on $\Lambda_\epsilon$ containing $\kappa_{a,b}$ is $\calE_{d,a}$, and it has length $q^m/2$.
\end{enumerate}
In particular, every orbit of $U(I)$ on $[G:B]$ has length $r^m/q^d$ if $0\notin I$, and length $q^m/2$ if $0\in I$.
\end{proposition}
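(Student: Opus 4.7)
The plan is to handle the three cases in turn; throughout, each $u_\bfa\in U(I)$ acts on $V$ by $(x,y)\mapsto(x+\hbar_\bfa(y),y)$, so the second coordinate is preserved. For~(a), a point of $\Lambda_\epsilon$ has the form $\la(x,y)\ra$ with $y\ne0$, and its $U(I)$-orbit is parameterized by $x\mapsto x+\hbar(y)$ as $\hbar$ varies over $M(I)$. By Lemma~\ref{prop_MI}\,\eqref{prop_MIa} applied with dummy variable $y$, the value $c:=\Tr_{r^m/q^d}(xy^{q^{s-1}})$ is an $\hbar$-invariant, so the orbit lies in $\calN_{d,y,c}$; a direct count of $x$'s satisfying the trace condition gives $|\calN_{d,y,c}|=r^m/q^d$. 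The stabilizer of $\la(x,y)\ra$ equals $\{\hbar\in M(I):\hbar(y)=0\}$, whose size is computed by Lemma~\ref{lem_key}\,\eqref{lem_keya} (or~\eqref{lem_keyb} when $(m+s-1)/2\in I$), yielding orbit length exactly $r^m/q^d$; equality with $\calN_{d,y,c}$ follows.

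Case~(b) is the technical heart. I identify $\Lambda_0=\{\kappa_{a,b}:\Tr_{q^m/2}(ab)=1\}$ via Lemma~\ref{lem_EllipKcd}, and note that $u_\bfa$ fixes the $x^2$-coefficient $a$, so $u_\bfa\cdot\kappa_{a,b}=\kappa_{a,b'}$ for a unique $b'$. Expanding $\kappa_{a,b}(x+\hbar_\bfa(y),y)$ (the cross term $2ax\hbar_\bfa(y)$ vanishes in characteristic~$2$) and using Lemma~\ref{lem_Mprop}\,\eqref{lem_Mpropd} to kill $\Tr(\hbar_\bfa(y)y)$, the defining equation for $b'$ becomes
\[
\Tr_{q^m/q}((b'-b)y^2)=\Tr_{q^m/q}(a\hbar_\bfa(y)^2)\quad\text{for all }y\in\bbF_{q^m}.
\]
Writing each side as the square of an $\bbF_q$-valued trace, extracting square roots, and applying the Frobenius symmetry $a_{m-i}=a_i^{q^{m-i}}$ built into $M(I)$, this simplifies to the clean identity $b'-b=\hbar_\bfa(\sqrt{a})^2$. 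With this in hand, the stabilizer becomes $\{\hbar\in M(I):\hbar(\sqrt{a})=0\}$ (squaring is a bijection on $\bbF_{q^m}$ in characteristic~$2$), so Lemma~\ref{lem_key} at $x=\sqrt{a}$ gives orbit length $q^{m-d}$. The invariance of $\Tr_{q^m/q^d}(ab)$ is then immediate from Lemma~\ref{prop_MI}\,\eqref{prop_MIb} applied at $x=a$ (using $a^{q^m/2}=\sqrt{a}$), so the orbit lies in $\calE_{d,a,c}$ with $c=\Tr_{q^m/q^d}(ab)$; as $|\calE_{d,a,c}|=q^{m-d}$, equality holds.

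For~(c), I argue that already the subgroup $U(0)\leqslant U(I)$ achieves the full orbit $\calE_{d,a}$. A direct computation gives $u_{a_0}\cdot\kappa_{a,b}=\kappa_{a,b+aa_0^2+a_0}$, whose stabilizer $\{a_0:aa_0^2+a_0=0\}=\{0,1/a\}$ has order~$2$, so the $U(0)$-orbit has size $q^m/2$. Since $\Tr_{q^m/2}((aa_0)^2+aa_0)=0$ in $\bbF_2$ (the $x^2+x$ map on $\bbF_{q^m}$ has trace-zero image to $\bbF_2$), this orbit lies in $\calE_{d,a}$, and $|\calE_{d,a}|=q^m/2$ forces equality. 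Case~(b) applied to $U(I\setminus\{0\})$ (when nonempty) preserves the finer invariant $\Tr_{q^m/q^d}(ab)$ and hence $\Tr_{q^m/2}(ab)$, so the full $U(I)$-orbit also lies in $\calE_{d,a}$ and therefore coincides with the $U(0)$-orbit. The concluding ``in particular'' claim is then immediate case-by-case, using $r=q$ when $\epsilon=0$.

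I expect the main obstacle to be the identity $b'-b=\hbar_\bfa(\sqrt{a})^2$ in Case~(b): its derivation requires careful tracking of Frobenius powers through the $M(I)$-basis, and the middle term $a_{m/2}X^{q^{m/2}}$ (subject to $\Tr_{q^m/q^{m/2}}(a_{m/2})=0$) demands separate verification. Once this identity is secured, the symplectic/quadric computation collapses onto the same evaluation-map bookkeeping from Lemma~\ref{lem_key} that drove Case~(a), and the proposition emerges uniformly.
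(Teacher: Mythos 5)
Your proposal is correct and follows essentially the same route as the paper: compute the explicit action $(x,y)\mapsto(x+\hbar_\bfa(y),y)$, use Lemma~\ref{prop_MI} to show the trace quantity is an invariant, count stabilizers via Lemma~\ref{lem_key} (at $y$ in the Hermitian/quadratic cases and at $\sqrt{a}$ in the symplectic case), and compare with $|\calN_{d,y,c}|$, $|\calE_{d,a,c}|$, $|\calE_{d,a}|$, treating the $U(0)$-orbit separately when $0\in I$. The only cosmetic difference is how you reach the identity $b'-b=\hbar_\bfa(\sqrt{a})^2$ (you extract square roots of $\bbF_q$-traces, whereas the paper passes to $\hbar_{\bfa\cdot\bfa}$ and invokes Lemma~\ref{lem_Mprop}\,\eqref{lem_Mpropc}); the two computations are equivalent.
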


\begin{proof}
First assume that $\epsilon\in\{-1,-1/2\}$. Take an arbitrary $u_\bfa\in U(I)$, where $\bfa\in\bbF_{r^m}^m$. Then $(x,y)^{u_\bfa}=(x+\hbar_\bfa(y),y)$, and we derive from Lemma~\ref{prop_MI}\,\eqref{prop_MIa} that $\la(x+\hbar_\bfa(y),y)\ra\in\calN_{d,y,c}$. Moreover, $u_\hbar$ fixes $\la(x,y)\ra$ if and only if $\hbar_\bfa(y)=0$.
Let $k=|I\setminus\{(m+s-1)/2\}|$. By Lemmas~\ref{lem_Mprop} and~\ref{lem_key}, the set $\{u_\bfa\mid \hbar_\bfa(y)=0\}$ has size $r^{m(k-1)}q^d$ if $(m+s-1)/2\notin I$, and size $r^{m(2k-1)/2}q^d$ if $(m+s-1)/2\in I$. Thus the $U(I)$-orbit on $\Lambda_\epsilon$ containing $\la(x,y)\ra$ has length $r^m/q^d$. Since $|\calN_{d,y,c}|=r^m/q^d$, it follows that $\calN_{d,y,c}$ is the $U(I)$-orbit containing $\la(x,y)\ra$. This proves statement~\eqref{thm_UIorbita}.

From now on, assume $\epsilon=0$. Then $r=q$ is even, and for $u_\bfa\in U(I)$ with $\bfa\in\bbF_{r^m}^m$, we have
\[
\kappa_{a,b}^{u_\bfa}(x,y)=\kappa_{a,b}((x,y)^{u_\bfa})=\kappa_{a,b}(x+\hbar_\bfa(y),y).
\]
Write $\bfa=(a_0,a_1,\ldots,a_{m-1})\in\bbF_{q^m}^m$ and $\bfa\cdot\bfa=(a_0^2,a_1^2,\ldots,a_{m-1}^2)$.
From Lemma~\ref{lem_Mprop} we see that both $\Tr_{q^{m}/q}((\hbar_{\bfa}(y)-a_0y)y)$ and $\Tr_{q^{m}/q}(a\hbar_{\bfa\cdot\bfa}(y^2)+\hbar_{\bfa\cdot\bfa}(a)y^2)$ are equal to $0$.  Hence
\begin{align*}
\kappa_{a,b}^{u_\bfa}(x,y)&=\Tr_{q^{m}/q}\left(a(x+\hbar_{\bfa}(y))^2+(x+\hbar_{\bfa}(y))y+by^2\right)\\
&=\Tr_{q^{m}/q}\left(ax^2+xy+by^2\right)+\Tr_{q^{m}/q}(\hbar_{\bfa}(y)y)+\Tr_{q^{m}/q}\left(a\hbar_{\bfa\cdot\bfa}(y^2)\right)\\
&=\Tr_{q^{m}/q}\left(ax^2+xy+by^2\right)+\Tr_{q^{m}/q}(a_0y^2)+\Tr_{q^{m}/q}\left(\hbar_{\bfa\cdot\bfa}(a)y^2\right)\\
&=\Tr_{q^{m}/q}\left(ax^2+xy+\left(b+a_0+\big(\hbar_{\bfa}(a^{q^m/2})\big)^2\right)y^2\right).
\end{align*}
In other words, $u_\bfa$ maps $\kappa_{a,b}$ to $\kappa_{a,b'}$ with $b'=b+a_0+(\hbar_{\bfa}(a^{q^m/2}))^2$. Consequently, $u_\bfa$
stabilizes $\kappa_{a,b}$ if and only if $a_0+(\hbar_{\bfa}(a^{q^m/2}))^2=0$.

If $0\not\in I$, then $a_0=0$, and Lemma~\ref{prop_MI}\,\eqref{prop_MIb} implies that the $U(I)$-orbit on $\Lambda_\epsilon$ containing $\kappa_{a,b}$ is contained in $\calE_{d,a,c}$, where $c=\Tr_{q^m/q^d}(ab)$. In this case, noting that $|\calE_{d,a,c}|=q^m/q^d$ and that $u_\bfa$ stabilizes $\kappa_{a,b}$ if and only if $\hbar_{\bfa}(a^{q^m/2})=0$, we conclude by Lemma~\ref{lem_key} that the $U(I)$-orbit containing $\kappa_{a,b}$ is $\calE_{d,a,c}$. Thus statement~\eqref{thm_UIorbitb} holds. Next assume that $0\in I$. The set $\{b+w+aw^2\mid w\in\bbF_{q^m}\}$ has size $q^m/2$ and equals $\{z\in\bbF_{q^m}\mid\Tr_{q^m/2}(az)=1\}$, as $\Tr_{q^m/2}(ab)=1$. Let $\hslash(X)=a_0X+\hbar(X)$, and recall that $u_\bfa$ maps $\kappa_{a,b}$ to $\kappa_{a,b'}$ with
\[
b'=b+a_0+(\hbar_{\bfa}(a^{q^m/2}))^2=b+a_0+a_0^2a+(\hslash_\bfa(a^{q^m/2}))^2.
\]
Since Lemma~\ref{prop_MI}\,\eqref{prop_MIb} implies $\Tr_{q^m/2}(a(\hslash_\bfa(a^{q^m/2}))^2)=0$, we obtain $\Tr(ab')=\Tr(a(b+a_0+aa_0^2))$. Hence the $U(I)$-orbit containing $\kappa_{a,b}$ is contained in $\calE_{d,a}$, and the $U(0)$-orbit containing $\kappa_{a,b}$ is equal to $\calE_{d,a}$. Therefore, the $U(I)$-orbit containing $\kappa_{a,b}$ is $\calE_{d,a}$, which proves statement~\eqref{thm_UIorbitc}.
\end{proof}

\section{Complete the classification}\label{sec:Xia-6}

With the preparation from the previous two sections, we are now able to prove the results in Subsections~\ref{sec:Xia-9} and~\ref{sec:Xia-10}.

By Lemma~\ref{lem:Xia-7} one may write $H=P{:}S$ with $P=U(I)$ (recall the definition of $U(I)$ in~\eqref{eqn_UIdef}) and $S\leqslant T$, where $I\subseteq\{0,1,\ldots,\lfloor(m+s-1)/2\rfloor\}$ with $0\in I$ only in the symplectic case.
Then Theorem~\ref{thm:Xia-2} is an immediate consequence of Propositions~\ref{Xia:Unitary01},~\ref{thm:dValue} and~\ref{thm_UIorbit}. Next we prove Theorems~\ref{thm:Xia-3} and~\ref{thm:Xia-4}.

\begin{proof}[Proof of Theorem~$\ref{thm:Xia-3}$]
By Proposition~\ref{thm:dValue}, a necessary condition for $G=HB$ is that either $d(I)$=1, or $d(I)=2$ with $q\in\{2,4\}$. For $d(I)=1$, Proposition~\ref{thm_UIorbit} implies that $|P|/|P\cap B|=q^{m-1}$, and so by Proposition~\ref{Xia:Omega01}, $G=HB$ if and only if $S$ is transitive on $W_{(\gcd(2,q-1))}$.
Next assume that $d(I)=2$ and $q\in\{2,4\}$. Then $m$ is even, and by Lemma~\ref{prop_UiElinear}, $H$ is contained in a maximal subgroup $A$ of $G$ such that $A\cap L=\Omega_m^+(q^2).2^2$.

\textsf{Case~1}: $q=2$. In this case, as~\cite[3.6.1c]{LPS1990} shows, $G=AB$ with $\N_1[A]=(A\cap B)\la\psi\ra$, where $\psi$ is the field automorphism of $\Omega_m^+(q^2)$. Hence $G=HB$ if and only if $A=H(A\cap B)$.
Note from Proposition~\ref{thm_UIorbit} that $|P|/|P\cap B|=q^m/q^2=(q^2)^{\frac{m}{2}-1}$. Then by Proposition~\ref{Xia:Omega01}, $A=H\N_1[A]$ if and only if $S$ is transitive on $W_{(1)}$. Since $\N_1[A]=(A\cap B)\la\psi\ra$, it follows that $A=H(A\cap B)$ if and only if $S$ is transitive on $W_{(1)}$ and $\la\psi\ra\leqslant S$. Thus $G=HB$ if and only if $S$ is transitive on $W_{(1)}$ with $|T|/|SS_0|$ odd.

\textsf{Case~2}: $q=4$. In this case,~\cite[3.6.1c]{LPS1990} shows that $G=AB$ if and only if $G\geqslant L.2$ and $G\neq\mathrm{O}_{2m}^+(4)$. Hence a necessary condition for $G=HB$ is $G\geqslant L.2$ with $G\neq\mathrm{O}_{2m}^+(4)$. Now suppose that this condition holds. Then $G=AB$, and by the same argument as in Case~1 we conclude that $G=HB$ if and only if $S$ is transitive on $W_{(1)}$ with $|T|/|SS_0|$ odd.
\end{proof}

\begin{proof}[Proof of Theorem~$\ref{thm:Xia-4}$]
Let $A$ be a maximal subgroup of $G$ such that $A\cap L=\mathrm{O}_{2m}^+(q)$ and $\bfO_2(\Pa_m[G])=U(0)\times\bfO_2(\Pa_m[A])$, and let $A_0=\Omega_{2m}^+(q){:}\la\phi\ra=\Omega_{2m}^+(q){:}f$, where $\phi$ is as in~\eqref{eqn:Xia-18}.
By Proposition~\ref{thm:dValue}, a necessary condition for $G=HB$ is that either $0\in I$, or $0\notin I$ and $d(I)\in\{1,2\}$ with $q\in\{2,4\}$. For $0\in I$, Proposition~\ref{thm_UIorbit} implies that $|P|/|P\cap B|=q^m/2$, and so by Proposition~\ref{Xia:Symplectic01}, $G=HB$ if and only if $S$ is transitive on $W_{(1)}$.
Next assume that $0\notin I$ and $d(I)\in\{1,2\}$ with $q\in\{2,4\}$. Then $U(I)\leqslant\Pa_m[A]$ and $H<A$.

\textsf{Case~1}: $q=2$. It can be seen from~\cite[3.2.4e]{LPS1990} that $G=AB$ with $A\cap B=\Sp_{2m-2}(2)\times2$ and $A_0\cap B=\Sp_{2m-2}(2)$. Hence $G=HB$ if and only if $A=H(A\cap B)$. Moreover, as $H<\Pa_m[A]=\Pa_m[A_0]<A_0$, we have $A=H(A\cap B)$ if and only if $A_0=H(A_0\cap B)$. Since $A_0\cap B$ is maximal in $A_0$, it follows from Theorem~\ref{thm:Xia-3} that $G=HB$ if and only if $S$ is transitive on $W_{(1)}$ and either $d(I)=1$, or $d(I)=2$ with $|T|/|SS_0|$ odd.

\textsf{Case~2}: $q=4$. As~\cite[Theorem~A]{LPS1990} shows that $G=AB$ if and only if $G=\GaSp_{2m}(4)$, a necessary condition for $G=HB$ is $G=\GaSp_{2m}(4)$. Now suppose that this condition holds. Then $G=AB$ with $A\cap B=\Sp_{2m-2}(4)\times2<\mathrm{O}_{2m}^+(4)$ (see~\cite[3.2.4e]{LPS1990}), and so $G=HB$ if and only if $A=H(A\cap B)$. Since $H<\Pa_m[A]=\Pa_m[A_0]<A_0$, we have $A=H(A\cap B)$ if and only if $A_0=H(A_0\cap B)$. Then as $A_0\cap B=\Sp_{2m-2}(4)<A^{(\infty)}$, it follows that $G=HB$ if and only if $A^{(\infty)}=(H\cap A^{(\infty)})(A_0\cap B)$ with $A_0=HA^{(\infty)}$. If $d(I)=1$, then by Theorem~\ref{thm:Xia-3} we conclude that $G=HB$ if and only if $S$ is transitive on $W_{(1)}$ with $|T|/|SS_0|$ odd. If $d(I)=2$, then Theorem~\ref{thm:Xia-3} shows that there is no factorization $A^{(\infty)}=(H\cap A^{(\infty)})(A_0\cap B)$. This completes the proof.
\end{proof}

We now prove Corollary~\ref{cor_G0} and Proposition~\ref{prop_K}.

\begin{proof}[Proof of Corollary~$\ref{cor_G0}$]
Write $T=\la a\ra{:}\la\varphi\ra$ such that $\la a\ra=\GL_1(q^{sm})$, $a^\varphi=a^p$ and $\phi=\varphi^m$.

\textsf{Case~1}: $L=\SU_{2m}(q)$. In this case, we may write $G=L.\calO$ with $\calO\leqslant\la\delta\ra{:}\la\phi\ra$, and let $\calO\cap\la\delta\ra=\la\delta^\ell\ra$, where $\ell$ is a divisor of $q+1$. Then $\calO=\la\delta^\ell\ra\la\delta^d\phi^e\ra$ for some $d\in\{1,\dots,q+1\}$ and divisor $e$ of $2f$ such that
\begin{equation}\label{eqn:Xia-21}
(\delta^d\phi^e)^{2f/e}\in\la\delta^\ell\ra.
\end{equation}
Straightforward calculation shows that~\eqref{eqn:Xia-21} holds if and only if $\ell$ divides $d(q^2-1)/(p^e-1)$, that is, condition~\eqref{cor_G0a} in Corollary~\ref{cor_G0} holds. Since $G\cap T=(L\cap T).\calO=\la a^\ell\ra\la a^d\varphi^{me}\ra$,
the Foulser triple of the subgroup $G\cap T$ of $\la a\ra{:}\la\varphi\ra$ is $(\ell,j,me)$, where $j\in\{1,\dots,q^{2m}-1\}$ is a multiple of $(q^{2m}-1)_{\pi(\ell)'}$ such that $j-d$ is divisible by $\ell$.
By Theorem~\ref{thm:Xia-2}, there exists some $H$ as in Hypothesis~$\ref{hypo-1}$ such that $G=HB$ if and only if $G\cap T$ is transitive on $(\bbF_{q^2}^m)_{(q+1)}$. By Theorem~\ref{thm:Xia-1}, this holds if and only if the following two conditions hold with $i=(q^{2m}-1)/(q+1)$:
\begin{enumerate}[\rm(i)]
\item $\pi(\ell)\cap\pi(i)\subseteq\pi(f)\cap\pi(p^{me}-1)\setminus\pi(j)$;
\item if $\gcd(\ell,i)$ is even and $p^{me}\equiv3\pmod{4}$, then $\gcd(\ell,i)\equiv2\pmod{4}$.
\end{enumerate}
Since $j-d$ is divisible by $\ell$, it follows that $\gcd(\ell,i)$ is coprime to $j$ if and only if it is coprime to $d$. Hence condition~(i) is equivalent to condition~\eqref{cor_G0b} in Corollary~\ref{cor_G0}. Moreover, condition~(ii) is exactly condition~\eqref{cor_G0c} in Corollary~\ref{cor_G0}. Thus the conclusion of Corollary~\ref{cor_G0} is true for $L=\SU_{2m}(q)$.

\textsf{Case~2}: $L=\Omega_{2m}^+(q)$. First assume that $q^m\not\equiv1\pmod{4}$. Then Theorem~\ref{thm:Xia-1} shows that the subgroup $\la a^{\gcd(2,q-1)}\ra$ of $L\cap T$ is transitive on $(\bbF_q^m)_{(\gcd(2,q-1))}$. Hence it holds for any $G$ that $G\cap T$ is transitive on $(\bbF_q^m)_{(\gcd(2,q-1))}$, and so by Theorem~\ref{thm:Xia-3}, there exists some $H$ as in Hypothesis~$\ref{hypo-1}$ such that $G=HB$.
Now assume that $q^m\equiv1\pmod{4}$. If $G\leqslant L.\la\delta'',\phi\ra$, then $G\cap T=\la a^2\ra{:}\la\varphi\ra$ is not transitive on $(\bbF_q^m)_{(2)}$, and so by Theorem~\ref{thm:Xia-3}, there does not exist any $H$ as in Hypothesis~$\ref{hypo-1}$ such that $G=HB$.
Conversely, suppose that $G\nleqslant L.\la\delta'',\phi\ra$. Then $G=L.\calO$ such that $\calO$ contains some element of the form $\delta'(\delta'')^x\phi^y$, where $x$ and $y$ are integers. Accordingly, $G\cap T$ contains $\la a^2\ra\la a\phi^k\ra$ for some integer $k$. By Theorem~\ref{thm:trans}, $\la a^2\ra\la a\phi^k\ra$ is transitive on $\bbF_q^m$ and hence on $(\bbF_q^m)_{(2)}$. Thus, by Theorem~\ref{thm:Xia-3}, there exists some $H$ as in Hypothesis~$\ref{hypo-1}$ such that $G=HB$.
This proves Corollary~\ref{cor_G0} for $L=\Omega_{2m}^+(q)$.

\textsf{Case~3}: $L=\Sp_{2m}(q)$. In this case, any $G$ contains $\la a\ra$, which is transitive on $\bbF_q^m$. Then by Theorem~\ref{thm:Xia-4} there exists some $H$ as in Hypothesis~$\ref{hypo-1}$ such that $G=HB$. This completes the proof.
\end{proof}

\begin{proof}[Proof of Proposition~$\ref{prop_K}$]
Let $N=\Nor_{\ddot{G}}\big(B^{(\infty)}\big)/B^{(\infty)}$ and $J=\Nor_{RT}\big(B^{(\infty)}\big)B^{(\infty)}/B^{(\infty)}$. We first prove that $G=HK$ for some $H$ in Hypothesis~\ref{hypo-1} if and only if $K$ satisfies condition~\eqref{prop_Ka} and
\begin{equation}\label{eqn:Xia-22}
N=J\big((K\cap\ddot{G})/B^{(\infty)}\big).
\end{equation}
Then we show that $N$ and $J$ are as described in the table of the proposition. Write
\[
\ddot{H}=RT=R{:}T
\]
and $\ddot{B}=\Nor_{\ddot{G}}\big(B^{(\infty)}\big)$. Note that $\ddot{H}<\ddot{G}$ and that $\ddot{B}$ is a maximal subgroup of $\ddot{G}$ containing $K\cap\ddot{G}$. Let $\overline{\phantom{w}}\colon\ddot{B}\to\ddot{B}/B^{(\infty)}$ be the quotient modulo $B^{(\infty)}$.

Suppose that $G=HK$ for some $H$ as in Hypothesis~\ref{hypo-1}. Then since $H\leqslant\ddot{H}<\ddot{G}$, this implies $G\cap\ddot{G}=H(K\cap\ddot{G})=(G\cap\ddot{H})(K\cap\ddot{G})$. Hence $\ddot{G}=\ddot{H}(G\cap\ddot{G})=\ddot{H}(K\cap\ddot{G})$, which together with $K\cap\ddot{G}\leqslant\ddot{B}$ yields $\ddot{B}=(\ddot{H}\cap\ddot{B})(K\cap\ddot{G})$. Taking $\overline{\phantom{w}}$ on both sides, we then obtain~\eqref{eqn:Xia-22}. To verify condition~\eqref{prop_Ka}, assume that $G\nleqslant\ddot{G}$. Then $L=\Omega_{2m}^+(q)$, and $\GaO_{2m}^+(q)=\ddot{G}G$ as $\ddot{G}$ is a subgroup of index $2$ in $\GaO_{2m}^+(q)$. Since $G=HK$ with $H\leqslant\ddot{H}<\ddot{G}$, it follows that $\GaO_{2m}^+(q)=\ddot{G}G=\ddot{G}K$, satisfying condition~\eqref{prop_Ka}.

Conversely, suppose that $K$ satisfies condition~\eqref{prop_Ka} and~\eqref{eqn:Xia-22}. Since $K\leqslant B^{(\infty)}$, we deduce from~\eqref{eqn:Xia-22} that $\ddot{B}=(\ddot{H}\cap\ddot{B})(K\cap\ddot{G})$, and so $\ddot{G}=\ddot{H}\ddot{B}=\ddot{H}(K\cap\ddot{G})$. As $K\cap\ddot{G}\leqslant G\cap\ddot{G}$, it follows that $G\cap\ddot{G}=(G\cap\ddot{H})(K\cap\ddot{G})$. If $G\leqslant\ddot{G}$, then this already shows that $G=HK$ with $H$ taken to be $G\cap\ddot{H}$. Now assume that $G\nleqslant\ddot{G}$. Then $L=\Omega_{2m}^+(q)$, and condition~\eqref{prop_Ka} states that $\GaO_{2m}^+(q)=\ddot{G}K$. This in conjunction with $\ddot{G}=\ddot{H}(K\cap\ddot{G})$ leads to $\GaO_{2m}^+(q)=\ddot{H}K$. It then follows that $G=(G\cap\ddot{H})K$, as desired.

Next we prove that $N$ and $J$ are as described in the table of the proposition. Write $T=\la a\ra{:}\la\varphi\ra$ such that $\la a\ra=\C_{q^{sm}-1}$ is a Singer group in $\GL_m(q^s)$ and $a^\varphi=a^p$. Note that $J=\overline{\ddot{H}\cap\ddot{B}}$.

\textsf{Case~1}: $L=\SU_{2m}(q)$. In this case, $N=(\la\delta_1\ra\times\la\delta_2\ra){:}\la\phi\ra=(\C_{q+1}\times\C_{q+1}){:}\C_{2f}$ as
\[
\Nor_{\ddot{G}}\big(B^{(\infty)}\big)=\big(\GU_{2m-1}(q)\times\GU_1(q)\big){:}\la\phi\ra.
\]
Since $Z=\mathbf{Z}(\GU_{2m}(q))=\la a^{(q^{2m}-1)/(q+1)}\ra$, we have $\overline{Z}\cong Z=\C_{q+1}$. Moreover, there exists $b\in\la a\ra$ such that $b^{-1}\varphi b$ fixes $\lambda f_1$. Let $\rho=b^{-1}\varphi b$. Then $T_{\la\omega^{q-1}\ra\lambda f_1}=Z{:}\la\rho\ra$ with $\rho^{2f}\in L$ and $\overline{\rho}=\phi$. Therefore, we derive from Proposition~\ref{Xia:Unitary01} that $J=\overline{T_{\la\omega^{q-1}\ra\lambda f_1}}=\overline{Z}{:}\la\phi\ra$, and so the description of $N$ and $J$ in the table of the proposition holds.

\textsf{Case~2}: $L=\Omega_{2m}^+(q)$. In this case, $N=\C_{\gcd(2,q-1)}\times\C_{\gcd(2,q-1)}\times\C_f$, and similar argument as in Case~1 shows that $J=Z{:}\la\phi\ra=Z\times\la\phi\ra=\C_{\gcd(2,q-1)}\times\C_f$ by Proposition~\ref{Xia:Omega01}.

\textsf{Case~3}: $L=\Sp_{2m}(q)$. Then $N=\ddot{B}/B^{(\infty)}=\GaO_{2m}^-(q)/\Omega_{2m}^-(q)=\C_{2f}$, and since Remark~\ref{rem:Xia-4} shows that $|R|/|R\cap B^{(\infty)}|=q^m$, we derive from Proposition~\ref{Xia:Symplectic01} that $J=\C_{2f}$.
\end{proof}

Next, we prove Theorem~\ref{thm:Xia-5}.

\begin{proof}[Proof of Theorem~$\ref{thm:Xia-5}$]
Suppose that $G=HK$ is an exact factorization with $H$ solvable and $K$ core-free.
Let $L=\Sp_{2m}(q)$ be the socle of $G$, where $q=2^f$ is even. Then by~\cite[Theorem~3]{BL2021} we have $m\geqslant3$, $H\leqslant\Pa_m[G]$ and $K\cap L=\Omega_{2m}^-(q)$. Since $\GaO_{2m}^-(q)/\Omega_{2m}^-(q)$ is a cyclic group of order $2f$, it follows that $K=\Omega_{2m}^-(q).\mathcal{O}$ with $|\mathcal{O}|$ odd.

Let $A$ be a maximal subgroup of $G$ such that $A\cap L=\mathrm{O}_{2m}^+(q)$. Then $\bfO_2(\Pa_m[G])=U(0)\times\bfO_2(\Pa_m[A])$.
If $0\notin I$, then $H$ is contained in $A$ (up to conjugate in $G$), and so $A=H(A\cap K)$ is an exact factorization of the almost simple group $A$. However, by~\cite[Theorem~3]{BL2021}, there is no such factorization of $A$. Therefore, $0\in I$.
If $I$ contains some nonzero $i$, then since $|U(i)|\geqslant q^{m/2}$, we have $|H|_2\geqslant|U(0)||U(i)|\geqslant q^{3m/2}$, which contradicts $|H|=|G|/|K|$ as $K\geqslant\Omega_{2m}^-(q)$. Hence $I=\{0\}$, and so $H=U(0){:}S$ with $S\leqslant\GaL_1(q^m)$. In particular, $H$ is contained in a field-extension subgroup $M$ of $G$ over $\bbF_{q^m}$ such that $M\cap L=\Sp_2(q^m){:}m$. Let $B$ be a maximal subgroup of $G$ containing $K$ such that $B\cap L=\mathrm{O}_{2m}^-(q)$. Since the intersection of $M^{(\infty)}=\Sp_2(q^m)$ and $B\cap L$ is $\mathrm{O}_2^-(q^m)$, the intersection of $M^{(\infty)}$ and $B^{(\infty)}=\Omega_{2m}^-(q)$ is either $\Omega_2^-(q^m)$ or $\mathrm{O}_2^-(q^m)$.

Suppose for a contradiction that $m$ is even. Then for each $g\in\mathrm{O}_2^-(q^m)$, since the dimension of the fixed $\bbF_q$-space of $g$ is a multiple of $m$ (and hence is even), we have $g\in\Omega_{2m}^-(q)=B^{(\infty)}$. This implies that $M^{(\infty)}\cap B^{(\infty)}=\mathrm{O}_2^-(q^m)=\mathrm{D}_{2(q^m+1)}$. Then since $U(0)=q^m$ is a Sylow $2$-subgroup of $M^{(\infty)}$, it follows that $U(0)\cap B^{(\infty)}=2$. However, this is impossible as $U(0)\cap B^{(\infty)}\leqslant H\cap K=1$.

Thus we conclude that $m$ is odd. Now Theorem~\ref{thm:Xia-4} states that $S$ is transitive on $\bbF_q^m\setminus\{0\}$. To complete the proof, notice that $q^m|S|=|H|=|G|/|K|=q^m(q^m-1)|G/\Sp_{2m}(q)|/|\mathcal{O}|$ as $G=HK$ is an exact factorization. Then we obtain $|S|/(q^m-1)=|G/\Sp_{2m}(q)|/|\mathcal{O}|$, that is, the stabilizer of $S$ on $\bbF_q^m\setminus\{0\}$ has order $|G/\Sp_{2m}(q)|/|\mathcal{O}|$.
\end{proof}

\section{Applications}\label{sec:app}

In this section, we apply the classification results established so far to describe quasiprimitive permutation groups with a solvable transitive subgroup. We first prove Theorem~\ref{thm:QP}.

\begin{proof}[Proof of Theorem~$\ref{thm:QP}$]
Clearly, if $G$ has type HA or AS, then part~\eqref{thm:QPa} or~\eqref{thm:QPb} of Theorem~\ref{thm:QP} holds. Following the argument of~\cite{LPS2000} one concludes that $G$ is not of type TW and that, if $G$ has type HS or SD, then the following statements hold:
\begin{enumerate}[\rm(i)]
\item $\Soc(G)=L^2$ for some nonabelian simple group $L$;
\item there exists an almost simple group $G_0$ with socle $L$ and solvable subgroups $N_1$ and $N_2$ such that $G_0=N_1N_2$;
\item the projections $B_1$ and $B_2$ of $H\cap L^2$ into the two direct factors $L$ of $L^2$ are contained in $N_1$ and $N_2$ respectively.
\end{enumerate}
In this case, we see from~(i) that $G$ is primitive, and a combination of~(ii) and~(iii) with~\cite[Proposition~4.1]{LX2022} leads to part~\eqref{thm:QPc} of Theorem~\ref{thm:QP}.

Assume that $G$ has type HC or CD. Then $G\leqslant G_0\wr\Sy_k$ in product action for some quasiprimitive group $G_0$ of type HS or SD such that $\Soc(G)=\Soc(G_0)^k$. Let $B=G_0^k=G_1\times\cdots\times G_k$ be the base group of $G_0\wr\Sy_k$, where $G_1,\ldots,G_k$ are the $k$ direct factors of $G_0^k$, and let $\overline{H}=HB/B$ be the induced permutation group of $H$ on $G_1,\ldots,G_k$ by conjugation.
To show that part~\eqref{thm:QPd} of Theorem~\ref{thm:QP} holds in this case, it suffices to prove that, for each $i\in\{1,\ldots,k\}$, the projection $H_i$ of $H\cap B$ to $G_i$ is a transitive subgroup of $G_i$. Without loss of generality, assume that $\{G_1,\ldots,G_\ell\}$ is the orbit of $\overline{H}$ containing $G_i$. Then $H_1\times\cdots\times H_\ell$ is a transitive subgroup of $G_1\times\cdots\times G_\ell$ since $H$ is a transitive subgroup of $G$, and each of $H_1,\ldots,H_\ell$ has the same number of orbits, say, $n$. Suppose for a contradiction that $n\geqslant2$. Then $H_1\times\cdots\times H_\ell$ has $n^\ell$ orbits, and the induced permutation group of $\overline{H}$ on $\{G_1,\ldots,G_\ell\}$ is transitive on these $n^\ell$ orbits. As a consequence, $n^k$ divides $|\Sy_\ell|$. However, for each prime divisor $p$ of $n$, we have
\[
|\Sy_\ell|_p=(\ell!)_p<p^{\ell/(p-1)}\leqslant p^\ell\leqslant n_p^\ell,
\]
a contradiction.

Now assume that $G$ has type PA. Then there is a faithful action $\psi$ of $G$ on some $G$-invariant partition of $\Omega$ such that $G^\psi\leqslant G_0\wr\Sy_k$ in product action for some permutation group $G_0$ with $\Soc(G^\psi)=\Soc(G_0)^k$. Since $H$ is a transitive subgroup of $G$, it follows that $H^\psi$ is a transitive subgroup of $G^\psi$. Then the same argument as above leads to part~\eqref{thm:QPe} of Theorem~\ref{thm:QP}.
\end{proof}

The following is a consequence of the main result of~\cite{Lindstrom1972}.

\begin{lemma}\label{lem_union}
Let $S$ be a finite set, and let $\{F_i\mid i\in I\}$ be a finite set of nonempty subsets of $S$ such that $\bigcup_{i\in I}F_i=S$ and $\bigcup_{j\in J}F_j\neq S$ for any $J\subsetneq I$. Then $|I|\leq |S|$.
\end{lemma}

\begin{proof}
Suppose for a contradiction that $|I|>|S|$. Then by~\cite{Lindstrom1972}, there exist disjoint subsets $J_1$ and $J_2$ of $I$ such that $\bigcup_{j\in J_1}F_j=\bigcup_{j\in J_2}F_j$. It follows that $\bigcup_{j\in J_1}F_j\subseteq\bigcup_{j\in I\setminus J_1}F_j$ and so
\[
\bigcup_{i\in I\setminus J_1}F_i=\bigcup_{i\in I}F_i=S,
\]
a contradiction. Therefore, $|I|\leqslant|S|$.
\end{proof}

Let $m$ and $s$ be as in Table~\ref{TabHat}. Recall that if $I=\{i_1,\dots,i_k\}\subseteq\{1,\ldots,\lfloor(m+s-1)/2\rfloor\}$, then
\[
d(I)=
\begin{cases}
\gcd(2i_1-1,\ldots,2i_k-1,m)&\textup{if $s=2$},\\
\gcd(i_1,\ldots,i_k)&\textup{if $s=1$ and $m/2\in I$},\\
\gcd(i_1,\ldots,i_k,m)&\textup{if $s=1$ and $m/2\notin I$}.
\end{cases}
\]

\begin{lemma}\label{lem_DJ1}
Let $I\subseteq\{1,\ldots,\lfloor(m+s-1)/2\rfloor\}$, and for each $i\in I$ let
\[
C_i=
\begin{cases}
(\pi(m)\setminus\{2\})\setminus\pi(2i-1)&\text{if }s=2,\\
\pi(m)\setminus\pi(i)&\text{if }s=1.
\end{cases}
\]
Then the following statements hold:
\begin{enumerate}[{\rm(a)}]
\item if $s=2$ and $m$ is a $2$-power, then $d(I)=1\,\Leftrightarrow\,I\neq\varnothing$;
\item if $s=2$ and $m$ is not a $2$-power, then $d(I)=1\,\Leftrightarrow\,\bigcup_{i\in I}C_i=\pi(m)\setminus\{2\}$;
\item if $s=1$, then $d(I)=1\,\Leftrightarrow\,\bigcup_{i\in I}C_i=\pi(m)$.
\end{enumerate}
\end{lemma}

\begin{proof}
Statement~(a) is obvious. Under the condition of statement~(b), we have
\begin{align*}
d(I)=1&\;\Leftrightarrow\;\text{for each odd $p\in\pi(m)$, there exists $i\in I$ such that $p\nmid 2i-1$}\\
&\;\Leftrightarrow\;\text{for each $p\in\pi(m)\setminus\{2\}$, there exists $i\in I$ such that $p\in C_i$}\\
&\;\Leftrightarrow\;\bigcup_{i\in I}C_i=\pi(m)\setminus\{2\},
\end{align*}
proving statement~(b). For statement~(c), assume $s=1$ and write $I=\{i_1,\dots,i_k\}$.

\textsf{Case~1}: $m/2\in I$. In this case, $m$ is even, $d(I)=\gcd(i_1,\ldots,i_\ell)$, and
\begin{equation}\label{eq:d=1}
d(I)=1\;\Leftrightarrow\;\text{for each $p\in\pi(m/2)$, there exists $i\in I \setminus\{m/2\}$ such that $p\nmid i$}.
\end{equation}
If $m\equiv0\pmod{4}$, then $\pi(m)=\pi(m/2)$ and $C_{m/2}=\pi(m)\setminus\pi(\frac{m}{2})=\varnothing$, whence~\eqref{eq:d=1} gives
\[
d(I)=1\;\Leftrightarrow\;\bigcup_{i\in I\setminus\{m/2\}}C_i=\pi(m/2)\;\Leftrightarrow\;\bigcup_{i\in I }C_i=\pi(m).
\]
If $m\equiv2\pmod{4}$, then $C_{m/2}=\pi(m)\setminus\pi(m/2)=\{2\}$, and so~\eqref{eq:d=1} implies
\[d(I)=1\;\Leftrightarrow\;\bigcup_{i\in I\setminus\{m/2\}}(C_i\setminus\{2\})=\pi(m/2)\;\Leftrightarrow\;\bigcup_{i\in I }C_i=\pi(m).
\]

\textsf{Case~2}: $m/2\notin I$. In this case, $d(I)=\gcd(i_1,\ldots,i_\ell,m)$, and so
\begin{align*}
d(I)=1 &\;\Leftrightarrow\;\text{for each $p\in\pi(m)$, there exists $i\in I$ such that $p\nmid i$}\\
&\;\Leftrightarrow\;\bigcup_{i\in I}C_i=\pi(m).
\end{align*}
This completes the proof.
\end{proof}

Lemma~\ref{lem_DJ1} enables us to establish the next two lemmas.

\begin{lemma}\label{prop_mindI1}
Let $\mathcal{I}=\{I\mid I\textup{ is minimal subject to }I\subseteq\{1,\ldots,\lfloor(m+s-1)/2\rfloor\}\text{ and }d(I)=1\}$,
and let $M=\max\{|I|\mid I\in\mathcal{I}\}$. Then
\[
M=
\begin{cases}
1 &\text{if $s=2$ and $m$ is a power of $2$},\\
|\pi(m)\setminus\{2\}| &\text{if $s=2$ and $m$ is not a power of $2$},\\
|\pi(m)| &\text{if $s=1$}.
\end{cases}
\]
\end{lemma}

\begin{proof}
Take an arbitrary $I\in\mathcal{I}$. Apply Lemma~\ref{lem_DJ1} and adopt the notation there. If $s=2$ and $m$ is a power of $2$, then clearly $M=1$.

Next assume that $s=2$ and $m$ is not a power of $2$. Then Lemma~\ref{lem_DJ1} implies that for each $i\in I$ we have $C_i\neq \varnothing$, and for each $J\subsetneq I$ we have $\bigcup_{j\in J}C_j\neq\pi(m)\setminus\{2\}$. Hence Lemma~\ref{lem_union} asserts $|I|\leqslant |\pi(m)\setminus\{2\}|$. To prove $M=|\pi(m)\setminus\{2\}|$, we construct some $I\in\mathcal{I}$ with $|I|=|\pi(m)\setminus\{2\}|$.
Write $\pi(m)\setminus\{2\}=\{p_1,\ldots,p_t\}$ and $\theta=p_1\cdots p_t$. Consider
\[
I=\left\{\frac{1}{2}\left(\frac{\theta}{p_r}+1\right)\,\middle|\;r\in\{1,\ldots,t\}\right\}.
\]
Then $I\subseteq\{1,\ldots,\lfloor(m+s-1)/2\rfloor\}$,
\[
\bigcup_{i\in I}C_i=\bigcup_{r=1}^t\{p_r\}=\pi(m)\setminus\{2\},
\]
and it holds for each $J\subsetneq I$ that $\bigcup_{j\in J}C_j\neq\pi(m)\setminus\{2\}$.
By Lemma~\ref{lem_DJ1}, this implies $I\in\mathcal{I}$.

Now assume that $s=1$. In the same vein as above, we have $M\leqslant|\pi(m)|$ and that, if $\pi(m)=\{p_1,\ldots,p_t\}$ and $\theta=p_1\cdots p_t$, then the set $\{\theta/p_r\mid r\in\{1,\ldots,t\}\}$ belongs to $\mathcal{I}$. Thus, $M=|\pi(m)|$.
\end{proof}

\begin{lemma}\label{prop_mindI2}
Let $m\geqslant 4$ be even, and let $s=1$. Then
\[
\max\{|I|\mid I\textup{ is minimal subject to }I\subseteq\{1,\ldots,m/2\}\text{ and }d(I)=2\}=|\pi(m/2)|.
\]
\end{lemma}

\begin{proof}
By~\eqref{eq:d}, a necessary condition for $I\subseteq\{1,\ldots,m/2\}$ to satisfy $d(I)=2$ is that the numbers in $I$ are all even. In this case, let $J=\{i/2\mid i\in I\}$. Then $J\subseteq\{1,\ldots,\lfloor m/4\rfloor\}$, and we see that $I$ is minimal subject to $I\subseteq\{1,\ldots,m/2\}$ and $d(I)=2$ if and only if $J$ is minimal subject to $J\subseteq\{1,\ldots,\lfloor m/4\rfloor\}$ and $d(J)=1$. Hence Lemma~\ref{prop_mindI1} implies that the maximum size of such a minimal $I$ is $|\pi(m/2)|$.
\end{proof}

Lemmas~\ref{prop_mindI1} and~\ref{prop_mindI2} are crucial in determining solvable minimally transitive subgroups of almost simple permutation groups. In particular, they play a key role in the proof of statement~\eqref{prop_smallb} of the following result.

\begin{proposition}\label{prop_small}
Let $G$ be an almost simple permutation group on $n$ points such that $G\ngeqslant\A_n$, and let $H$ be a solvable transitive subgroup of $G$. Then the following statements hold:
\begin{enumerate}[\rm(a)]
\item $|H|<n^{\log_2n}$ for sufficiently large $n$;
\item\label{prop_smallb} if $G$ is primitive and $H$ is minimally transitive, then either $\ln|H|=O(\ln n\ln\ln n/\ln\ln\ln n)$ or $\Soc(G)=\Omega_{2m+1}(q)$ with $q$ odd.
\end{enumerate}
\end{proposition}

\begin{proof}
Let $K$ be a point stabilizer in $G$. Then $G=HK$ with $H$ solvable.
For non-classical groups $G$, one can directly verify by~\cite[Theorem~1.1]{LX2022} that $|H|$ is bounded above by a polynomial of $n$, and so both statements of the proposition are true. For classical groups, the same holds for rows~1--2, 4--5 and~9 of Table~\ref{TabLX2022} (the table lists corresponding groups in the quasisimple group $L$ such that $L/\Z(L)=\Soc(G)$). Thus we only need to deal with rows~3 and~6--8 of Table~\ref{TabLX2022}. It is well known (see for instance~\cite[\S\,I.5.3]{Tenenbaum2015}) that
\begin{equation}\label{eq:pi}
|\pi(m)|=O\left(\frac{\ln m}{\ln\ln m}\right).
\end{equation}

First assume that row~3 of Table~\ref{TabLX2022} appears. In this case, $L=\Sp_{2m}(q)$ with $q$ even,
\[
|H|\leqslant q^{m(m+1)/2}|\GaL_1(q^m)|<q^{m^2}\ \text{ and }\ n\geqslant\frac{|\Sp_{2m}(q)|}{|\GO^+_{2m}(q)|}=\frac{q^m(q^m+1)}{2}>q^m.
\]
Consequently, $m<\log_qn\leqslant\log_2n$, and $|H|<(q^m)^m<n^m<n^{\log_2n}$, which proves statement~(a). Moreover, for bounded $m$, the order $|H|$ is bounded above by a polynomial of $n$. Hence we may assume that $m$ is large enough, say, $m\geqslant4$.
To prove statement~(b), suppose that $G$ is primitive and $H$ is minimally transitive. Then we derive from Theorem~\ref{thm:Xia-4} that $|H|\leqslant|U(I)||\GaL_1(q^m)|$ with one of the following:
\begin{enumerate}[\rm(i)]
\item $I=\{0\}$;
\item $q=2$ or $4$, and $I$ is minimal subject to $I\subseteq\{1,\ldots,\lfloor m/2\rfloor\}$ and $d(I)=1$;
\item $q=2$, and $I$ is minimal subject to $I\subseteq\{1,\ldots,\lfloor m/2\rfloor\}$ and $d(I)=2$.
\end{enumerate}
In case~(i), according to~\eqref{eq:Ui}, we have $|U(I)|=q^m$. In case~(ii) or~(iii), we obtain from~\eqref{eq:UI} and Lemmas~\ref{prop_mindI1} and~\ref{prop_mindI2} that $|U(I)|\leqslant q^{m|I|}\leqslant q^{m|\pi(m)|}$. Hence it holds for all the three cases that
\[
|H|\leqslant|U(I)||\GaL_1(q^m)|\leqslant q^{m|\pi(m)|}|\GaL_1(q^m)|<q^{m(|\pi(m)|+2)}=n^{|\pi(m)|+2},
\]
which in conjunction with~\eqref{eq:pi} leads to $\ln|H|=O(\ln n\ln\ln n/\ln\ln\ln n)$.

Next assume that row~6 of Table~\ref{TabLX2022} appears. In this case, $L=\SU_{2m}(q)$,
\[
|H|\leqslant q^{m^2}|\GaL_1(q^{2m})|<q^{m^2+4m}\ \text{ and }\ n\geqslant\frac{|\SU_{2m}(q)|}{|\GU_{2m-1}(q)|}=\frac{q^{2m-1}(q^{2m}-1)}{q+1}>q^{2m},
\]
whence $|H|<n^{\log_2n}$. Suppose that $G$ is primitive and $H$ is minimally transitive. Then by Theorem~\ref{thm:Xia-2}, $|H|\leqslant|U(I)||\GaL_1(q^{2m})|$ such that $I$ is minimal subject to $I\subseteq\{1,\ldots,\lfloor(m+1)/2\rfloor\}$ and $d(I)=1$.
Thus, by~\eqref{eq:UI} and Lemma~\ref{prop_mindI1},
\[
|H|\leqslant|U(I)||\GaL_1(q^{2m})|\leqslant q^{2m|\pi(m)|}|\GaL_1(q^{2m})|<q^{2m(|\pi(m)|+4)}=n^{|\pi(m)|+4},
\]
from which we deduce $\ln|H|=O(\ln n\ln\ln n/\ln\ln\ln n)$ by~\eqref{eq:pi}.

Now let $L=\Omega_{2m+1}(q)$ with $q$ odd, as in row~7 of Table~\ref{TabLX2022}. Then we have
\[
|H|\leqslant q^{m(m+1)/2}|\GaL_1(q^m)|<q^{m^2}\ \text{ and }\ n\geqslant\frac{|\Omega_{2m+1}(q)|}{|\Omega_{2m}^-(q).2|}=\frac{q^m(q^m-1)}{2}>q^m,
\]
and so $|H|<n^{\log_2n}$.

Finally, the argument for row~8 of Table~\ref{TabLX2022} is the similar to that of row~3 by replacing the application of Theorem~\ref{thm:Xia-4} to that of Theorem~\ref{thm:Xia-3}. The proof is thus complete.
\end{proof}

\begin{remark}\label{rem:Xia-5}
In statement (b) of Proposition~\ref{prop_small}, the case $\Soc(G)=\Omega_{2m+1}(q)$ does not satisfy $\ln|H|=O(\ln n\ln\ln n/\ln\ln\ln n)$. In this case, $n=|\Omega_{2m+1}(q)|/|\Omega_{2m}^-(q).2|<q^{2m}$, and according to~\cite[Remark~2]{BL2021}, the solvable minimally transitive subgroup $H$ contains the full unipotent radical $q^{m(m-1)/2}.q^m$ and hence has order at least $n^{m/4}>n^{\log_qn/8}$.
\end{remark}

We conclude the paper by proving Corollary~\ref{thm:small}.

\begin{proof}[Proof of Corollary~$\ref{thm:small}$]
By Theorem~\ref{thm:QP}, one of cases~\eqref{thm:QPa}--\eqref{thm:QPe} there holds.
As proved in Proposition~\ref{prop_small}, there exist absolute constants $a>1$ and $A>1$ such that each pair $(G,H)$ in Theorem~\ref{thm:QP}\,\eqref{thm:QPb} satisfies
\[
|H|<
\begin{cases}
n^{\log_2n}&\text{if }n>a,\\
A&\text{if }n\leqslant a.
\end{cases}
\]
Hence the conclusion of the corollary in Theorem~\ref{thm:QP}\,\eqref{thm:QPb} follows immediately.

First assume that $G$ is primitive of type HA, as in case~\eqref{thm:QPa} of Theorem~\ref{thm:QP}. Then $n=p^d$ for some prime $p$ and positive integer $d$, and the stabilizer $H_\omega$ in $H$ of any point $\omega$ is a solvable subgroup of $\GL_d(p)$. As a consequence, $|H_\omega|_p\leqslant|\GL_d(p)|_p=p^{d(d-1)/2}$. Moreover, by~\cite[Lemma~19]{Kazarin1986}, there exists some absolute constant $\alpha$ in the interval $(11/5,9/4)$ such that a Hall $p'$-subgroup of $H_\omega$ has order $|H_\omega|/|H_\omega|_p\leqslant24^{-1/3}p^{d^\alpha}\gcd(d,p-1)$. Hence
\[
|H|=n|H_\omega|\leqslant24^{-1/3}p^{d+d^\alpha+d(d-1)/2}\gcd(d,p-1)<p^{1+d^\alpha+d(d+1)/2}=n^\frac{1+d^\alpha+d(d+1)/2}{d}.
\]
As $d=\log_pn\leqslant\log_2n$, we derive that $\ln|H|=O\big((\ln n)(\log_2n)^{\alpha-1}\big)=O((\ln n)^\alpha)$.

Next assume that Theorem~\ref{thm:QP}\,\eqref{thm:QPc} holds. When $n=|L|$ is sufficiently large, $L=\PSL_2(q)$ with $q=p^f\geqslant4$ for some prime $p$ and positive integer $f$, and
\[
|H|\leqslant2|\mathrm{Out}(L)||M_1||M_2|\leqslant2f\gcd(2,q-1)\cdot\frac{2(q+1)}{\gcd(2,q-1)}\cdot\frac{q(q-1)}{\gcd(2,q-1)}<q^5<n^2.
\]

By the above paragraph, there exist absolute constants $b>1$ and $B>4$ such that each pair $(G,H)$ in Theorem~\ref{thm:QP}\,\eqref{thm:QPc} satisfies
\[
|H|<
\begin{cases}
n^2&\text{if }n>b,\\
B&\text{if }n\leqslant b.
\end{cases}
\]
Now let $(G,H)$ be a pair in Theorem~\ref{thm:QP}\,\eqref{thm:QPd}. Then $n=n_0^k$, and $H\cap G_0^k\leqslant H_1\times\cdots\times H_k$ such that each $(G_0,H_i)$ is a pair $(G,H)$ in Theorem~\ref{thm:QP}\,\eqref{thm:QPc}.
If $n>b^k$, then $n_0>b$, and so $|H\cap G_0^k|<(n_0^2)^k=n^2$. If $n\leqslant b^k$, then $n_0\leqslant b$, and so
\[
|H\cap G_0^k|<B^k=\big(2^{\log_2B}\big)^k\leqslant\big(n_0^{\log_2B}\big)^k=n^{\log_2B}.
\]
In either case, $|H\cap G_0^k|<n^{\log_2B}$. Notice that the induced subgroup $\overline{H}$ of $\Sy_k$ on the $k$ copies of $G_0$ in the base group $G_0^k$ satisfies $|\overline{R}|\leqslant24^{(k-1)/3}$ by~\cite[Theorem~3]{Dixon1967}. This implies that
\[
|H|=|\overline{H}||H\cap G_0^k|<24^{(k-1)/3}n^{\log_2B}<2^{2k}n^{\log_2B}\leqslant(n_0)^{2k}n^{\log_2B}=n^{2+\log_2B}.
\]

Finally, assume that $(G,H)$ is in Theorem~\ref{thm:QP}\,\eqref{thm:QPe}. Let $n'$ be the number of parts in the $G$-invariant partition of $\Omega$ such that $G^\psi\leqslant\Sy_{n'}$. Then along the similar lines as in the previous paragraph, we obtain
\[
|H^\psi\cap G_0^k|<
\begin{cases}
\big(n_0^{\log_2n_0}\big)^k=\big((n')^{\log_2n'}\big)^{1/k}\leqslant\big((n')^{\log_2n'}\big)^{1/2}&\text{if }n'>a^k,\\
A^k=\big(2^{\log_2A}\big)^k\leqslant\big(n_0^{\log_2A}\big)^k=(n')^{\log_2A}&\text{if }n'\leqslant a^k,
\end{cases}
\]
and hence
\[
|H^\psi|\leqslant24^{(k-1)/3}|H^\psi\cap G_0^k|<2^{2k}|H^\psi\cap G_0^k|\leqslant(n_0)^{2k}|H^\psi\cap G_0^k|=(n')^2|H^\psi\cap G_0^k|\ll(n')^{\log_2n'}.
\]
Then since $n'\leqslant n$, it follows that $|H|=|H^\psi|\ll n^{\log_2n}$. This completes the proof.
\end{proof}

\section*{Acknowledgments}
Tao Feng acknowledges the support of NNSFC grant no.~12225110. Cai Heng Li and Binzhou Xia acknowledge the support of NNSFC grant no.~11931005. Conghui Li acknowledges the support of NNSFC grant no.~12271446 and Sichuan Science and Technology Program no.~2024NSFJQ0070. Lei Wang acknowledges the support of NNSFC grant no.~12061083. Hanlin Zou acknowledges the support of NNSFC grant no.~12461061.

\end{document}